\documentclass[11pt]{article}
\usepackage[T1]{fontenc}
\IfFileExists{lmodern.sty}{\usepackage{lmodern}}{}
\usepackage[margin=0.95in]{geometry}
\usepackage{amsmath,amssymb,amsthm}
\usepackage{booktabs,longtable,array}
\usepackage[hidelinks]{hyperref}
\newcommand{\Q}{\mathbb{Q}}
\newcommand{\C}{\mathbb{C}}
\newcommand{\Li}{\operatorname{Li}}
\newcommand{\Newt}{\operatorname{Newt}}
\newcommand{\inn}{\operatorname{in}}

\newcommand{\EE}{\mathbb{E}}
\newcommand{\ord}{\operatorname{ord}}
\newtheorem{theorem}{Theorem}[section]
\newtheorem{proposition}[theorem]{Proposition}
\newtheorem{lemma}[theorem]{Lemma}
\newtheorem{corollary}[theorem]{Corollary}

\theoremstyle{definition}

\title{A uniform degree bound for modular rank-two Nahm sums}
\author{Cetin Hakimoglu-Brown\\ \small\texttt{mathemails@proton.me}}
\date{September 28, 2026}

\begin{document}
\maketitle

\begin{abstract}
Let $F$ be a rank-two symmetrizable Nahm sum, with any symmetrizer $\operatorname{diag}(d_1,d_2)$, a positive-definite
rational matrix and rational shifts, and let $K$ be the field generated by its Nahm point. We prove that if $q^cF$ is modular, then
$[K:\Q]\le24$. Modularity forces the higher coefficients of the logarithmic radial asymptotic expansion to vanish; we
study the resulting polynomial equations through their Newton polygons and a pole-order analysis along the determinant
conic, and for $m=1$ show that their common components occur only on one explicit family, whose Nahm point is rational.
For equal row sums, Bloch-group torsion gives $[K:\Q]\le2$. For complementary Nahm coordinates, the bound is $2$
when $m\ne1$ and $4$ when $m=1$, using the first two asymptotic corrections in the latter case.
Several steps of the proofs rely on exact computation.
\end{abstract}

\medskip
\noindent\textit{2020 Mathematics Subject Classification.} Primary 11F03; Secondary 11P84, 14M25, 33D15.\\
\textit{Keywords.} Nahm sums, modular functions, radial asymptotics, Newton polygons, Bernoulli polynomials.

{\small\setcounter{tocdepth}{1}\tableofcontents}

\section{Introduction}\label{sec:setting}
Nahm sums are the $q$-series $\sum_{n\in\mathbb Z_{\ge0}^r}q^{\frac12n^{\mathsf T}An+b^{\mathsf T}n}/((q;q)_{n_1}\cdots(q;q)_{n_r})$
attached to a positive-definite matrix $A$ and a vector $b$. They arise as characters of rational conformal
field theories, and Nahm's conjecture \cite{Nah,Z} relates their modularity to torsion in the Bloch group of
the solutions of the associated Nahm equations; see \cite{VZ} for counterexamples to one direction and for
the asymptotic method used here. Calegari, Garoufalidis and Zagier \cite{CGZ} proved that modularity implies
that the associated element of the Bloch group is torsion. Mizuno \cite{Miz} extended the framework to
symmetrizable matrices, where the denominators become $(q^{d_i};q^{d_i})_{n_i}$, and proved the same
implication in that setting. A basic invariant of a modular Nahm sum is the number field
generated by its Nahm point, the solution of the Nahm equations in $(0,1)^r$. In all known modular examples
this field has small degree, but to our knowledge no uniform bound has been proved, even in rank two. In
this paper we prove such a bound for all rank-two symmetrizable Nahm sums with rational data. After a normalization
(Lemma~\ref{lem:norm}) the symmetrizer is $\operatorname{diag}(1,m)$ with $m\ge1$ rational; the case
$m=1$ contains all symmetric rank-two Nahm sums with rational data.

\paragraph{Setting.} Let $m\ge1$ be a rational number, $a,b,d\in\Q$ with $a,d>0$ and $\Delta:=ad-mb^2>0$, and $B,C\in\Q$. Put
\[
F(q)=\sum_{n,k\ge0}\frac{q^{\frac a2n^2+mbnk+\frac{md}2k^2+Bn+Ck}}{(q;q)_n(q^m;q^m)_k},
\]
with Nahm point $(x,y)\in(0,1)^2$ defined by $1-x=x^ay^b$, $1-y=x^{mb}y^d$, and $K=\Q(x,y)$.
In Mizuno's notation \cite{Miz} this is the Nahm sum of $(A,\mathbf b,D)$ with
$A=\bigl(\begin{smallmatrix}a&b\\ mb&d\end{smallmatrix}\bigr)$, $\mathbf b=(B,C)$, symmetrizer $D=\operatorname{diag}(1,m)$,
so that $AD$ is our positive-definite matrix; his Nahm equation is ours. For non-integral $m$, $F$ is a series in
fractional powers of $q$.

\begin{lemma}[Normalization of the symmetrizer]\label{lem:norm}
Let $d_1,d_2\ge1$ be integers, $m=d_2/d_1$, $A=\bigl(\begin{smallmatrix}a&b\\ mb&d\end{smallmatrix}\bigr)$ with
$A\operatorname{diag}(d_1,d_2)$ positive definite, $b_1,b_2,c\in\Q$, and let
\[
 f(q)=\sum_{n_1,n_2\ge0}\frac{q^{\frac12n^{\mathsf T}A\operatorname{diag}(d_1,d_2)n+b_1n_1+b_2n_2+c}}{(q^{d_1};q^{d_1})_{n_1}(q^{d_2};q^{d_2})_{n_2}}
\]
be the rank-two Nahm sum of \cite{Miz}. Then $f(q)=Q^{c/d_1}F(Q)$ with $Q=q^{d_1}$, where $F$ is the sum above with
this $m$ and $(B,C)=(b_1,b_2)/d_1$. The matrix $A$, and hence the Nahm point and the field $K$, are unchanged, and
$f$ is modular if and only if $Q^{c/d_1}F(Q)$ is. Exchanging the two summation indices replaces
$(a,b,d,m)$ by $(d,mb,a,\frac1m)$. Hence every rank-two symmetrizable Nahm sum with rational data is, up to the
substitution $\tau\mapsto d_1\tau$ and the order of the indices, a sum $q^cF$ with $m\ge1$ rational.
\end{lemma}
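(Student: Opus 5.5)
The proof is a direct computation: substitute $Q=q^{d_1}$, rewrite the exponent and denominators in terms of $Q$, and check that each object named in the statement is unchanged or transforms as claimed.

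\emph{Step 1: the identity $f(q)=Q^{c/d_1}F(Q)$.} Write $n=(n_1,n_2)=(n,k)$. With $D=\operatorname{diag}(d_1,d_2)$ and $A=\bigl(\begin{smallmatrix}a&b\\ mb&d\end{smallmatrix}\bigr)$ we have
\[
AD=\begin{pmatrix} ad_1 & bd_2\\ mbd_1 & dd_2\end{pmatrix}=d_1\begin{pmatrix} a & mb\\ mb & md\end{pmatrix},
\]
which is symmetric because $md_1=d_2$. Hence
\[
\tfrac12 n^{\mathsf T}ADn=d_1\bigl(\tfrac a2 n^2+mbnk+\tfrac{md}{2}k^2\bigr).
\]
The linear and constant parts give $b_1n+b_2k+c=d_1\bigl(Bn+Ck+c/d_1\bigr)$ with $(B,C)=(b_1,b_2)/d_1$. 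For the denominators, $(q^{d_1};q^{d_1})_{n}=(Q;Q)_n$ and $(q^{d_2};q^{d_2})_k=(Q^m;Q^m)_k$. Summing term by term gives $f(q)=Q^{c/d_1}F(Q)$, as formal series in fractional powers.

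\emph{Step 2: the hypotheses and invariants carry over.} Positive definiteness of $AD$ is equivalent to that of $\bigl(\begin{smallmatrix}a&mb\\ mb&md\end{smallmatrix}\bigr)$. That matrix is positive definite exactly when $a>0$ and $m(ad-mb^2)>0$, which gives $a>0$, $\Delta>0$, and then $d>0$. Mizuno's Nahm equation depends only on $A$, and is precisely $1-x=x^ay^b$, $1-y=x^{mb}y^d$. So the Nahm point and $K$ are unchanged.

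For modularity, $q=e^{2\pi i\tau}$ gives $Q=e^{2\pi i d_1\tau}$. A function $g(\tau)$ is modular (invariant under some finite-index subgroup of $\mathrm{SL}_2(\mathbb Z)$, possibly up to a multiplier) if and only if $g(d_1\tau)$ is. This holds because conjugation by $\operatorname{diag}(d_1,1)$ maps congruence subgroups such as $\Gamma(Nd_1)$ into $\Gamma(N)$, and the inverse direction works similarly. This is the only point needing a remark, and it is standard.

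\emph{Step 3: exchanging the indices.} Swap $n\leftrightarrow k$ in $F(Q)$. The quadratic form becomes
\[
\tfrac{md}{2}n^2+mbnk+\tfrac a2 k^2,
\]
and the denominators become $(Q^m;Q^m)_n(Q;Q)_k$. Now substitute $Q'=Q^m$. Dividing the quadratic form by $m$ gives
\[
\tfrac d2 n^2+bnk+\tfrac{1}{2m}k^2\cdot a,
\]
and the denominators become $(Q';Q')_n(Q'^{1/m};Q'^{1/m})_k$. This is the form of $F$ with parameters $(a',b',d',m')=(d,mb,a,1/m)$. To see this, check the defining pattern $\tfrac{a'}2n^2+m'b'nk+\tfrac{m'd'}{2}k^2$: we get $m'b'=b$ and $m'd'=a/m$, as required. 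The Nahm equations with $(x,y)$ swapped match, so $K$ is preserved.

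\emph{Conclusion.} Given any $d_1,d_2$, order the indices so that $d_2\ge d_1$, so $m\ge1$. Then Step 1 applies, which gives the final claim. There is no real obstacle; the only step needing care is the modularity equivalence under $\tau\mapsto d_1\tau$.
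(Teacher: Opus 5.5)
Your Step~1 computation, the transfer of positive definiteness and of the Nahm equations, and the exchange of indices in Step~3 are correct. They follow the paper's proof, which is the same direct substitution stated more tersely.

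The one step that does not go through as written is the modularity equivalence under $\tau\mapsto d_1\tau$. You argue that conjugation by $\delta=\operatorname{diag}(d_1,1)$ sends $\Gamma(Nd_1)$ into $\Gamma(N)$. That only handles functions invariant under a \emph{congruence} subgroup. The paper's definition of modularity allows an arbitrary finite-index $\Gamma\subset\mathrm{SL}_2(\mathbb Z)$, and a non-congruence subgroup of finite index contains no $\Gamma(N)$, so your argument says nothing about it. The fix is the commensurability argument the paper invokes:
\begin{itemize}
\item The map $\gamma\mapsto\delta\gamma\delta^{-1}$ is an isomorphism $\Gamma_0(d_1)\to\Gamma^0(d_1)$.
\item Hence if $f$ is $\Gamma$-invariant, $f(d_1\tau)$ is invariant under $\{\gamma\in\Gamma_0(d_1):\delta\gamma\delta^{-1}\in\Gamma\}$. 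This is the preimage of $\Gamma\cap\Gamma^0(d_1)$, so its index in $\mathrm{SL}_2(\mathbb Z)$ is at most $[\mathrm{SL}_2(\mathbb Z):\Gamma_0(d_1)]\,[\mathrm{SL}_2(\mathbb Z):\Gamma]$.
\item The reverse direction uses $\delta^{-1}$ and the isomorphism $\Gamma^0(d_1)\to\Gamma_0(d_1)$ in the same way.
\end{itemize}

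Your parenthetical definition of modularity also drops meromorphy at every cusp. This is part of the paper's definition and must be transported too. For $\gamma\in\mathrm{SL}_2(\mathbb Z)$, write the integral matrix $\delta\gamma$ of determinant $d_1$ as $\gamma_1\beta$, with $\gamma_1\in\mathrm{SL}_2(\mathbb Z)$ and $\beta$ upper triangular with positive integer diagonal entries $\alpha,\delta'$ (Hermite normal form). The cusp expansion of $f\circ\gamma_1$ in powers of $e^{2\pi i\tau'/h}$, evaluated at $\tau'=\beta\tau$, then gives an expansion of $f(d_1\gamma\tau)$ in powers of $e^{2\pi i\tau/(h\delta')}$ with finitely many negative-order terms. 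The reverse direction is the same with $\delta^{-1}\gamma$.

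Both repairs are routine. As written, however, your Step~2 proves the equivalence only for congruence modularity.
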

\begin{proof}
With $D=\operatorname{diag}(d_1,d_2)$, $\frac12n^{\mathsf T}ADn=d_1\bigl(\frac a2n_1^2+mbn_1n_2+\frac{md}2n_2^2\bigr)$ and
$(q^{d_2};q^{d_2})_{n_2}=(Q^m;Q^m)_{n_2}$. The substitution $\tau\mapsto d_1\tau$ preserves invariance under a subgroup
of finite index, because $\delta^{-1}\mathrm{SL}_2(\mathbb Z)\delta$ and $\mathrm{SL}_2(\mathbb Z)$ are commensurable for
$\delta=\operatorname{diag}(d_1,1)$, and it preserves meromorphy at the cusps. The Nahm equations involve $A$ only.
For the exchange, the matrix $\bigl(\begin{smallmatrix}d&mb\\ b&a\end{smallmatrix}\bigr)$ has lower-left entry
$\frac1m\cdot mb$.
\end{proof}
\noindent\emph{The radial expansion for rational $m$.} For $m=d_2/d_1$ the radial expansion of $F$ used below follows
from Mizuno's integer-step formula \cite[Thm.~2.1]{Miz} applied to $f$: if $Q=e^{-\varepsilon}$ then $q=e^{-\varepsilon/d_1}$,
so the radial parameter in the integer-step formula is $\varepsilon/d_1$, and the steps $d_1\varepsilon/d_1=\varepsilon$ and
$d_2\varepsilon/d_1=m\varepsilon$ give exactly the formula for the symmetrizer $\operatorname{diag}(1,m)$, with the
normalized shifts $B=b_1/d_1$, $C=b_2/d_1$. The expansion is therefore valid for every positive rational $m$; after
clearing denominators, the numerators $\Psi_j$ defined below are polynomial in the parameters.

\paragraph{Modularity.} We say that $q^cF$ is \emph{modular} if $f(\tau):=e^{2\pi ic\tau}F(e^{2\pi i\tau})$ is invariant
under a finite-index subgroup $\Gamma\subset\mathrm{SL}_2(\mathbb Z)$ (a finite-order multiplier is allowed; it
disappears after raising $f$ to a power, which does not affect what follows) and is meromorphic at every cusp:
for each $\gamma\in\mathrm{SL}_2(\mathbb Z)$, $f(\gamma\tau)$ has an expansion $\sum_{n\ge n_0}a_ne^{2\pi in\tau/h}$ with
finitely many negative-order terms. ($F$ is holomorphic on the upper half-plane.)

\paragraph{Radial expansion.} By \cite[Thm.~2.1]{Miz} with $\alpha=0$ (this case is also in \cite{Kan}; for
$m=1$ see \cite{GZ,VZ}), as $\varepsilon\to0^+$,
\[
\log F(e^{-\varepsilon})\sim\Lambda/\varepsilon+\log\omega+\sum_{j\ge1}\ell_j\varepsilon^j,
\]
where $\sum_j\ell_j\varepsilon^j$ is the logarithm of the formal Gaussian integral \cite[(15)--(18)]{Miz}: the
quadratic form is $\widetilde AD$ with $\widetilde A=A+\operatorname{diag}(x/(1-x),y/(1-y))$, which is the matrix
$\widetilde H$ below; the vertices are the series built from Bernoulli polynomials and $\Li_{2-r}(x),\Li_{2-r}(y)$;
the linear term comes from the shifts. \emph{Modularity forces $\ell_j=0$ for $j\ge2$} \cite[Lemma~3.1]{VZ}:
since $f(i\varepsilon/2\pi)=(f\circ S)(2\pi i/\varepsilon)$ and $f\circ S$ is invariant under a finite-index subgroup and
meromorphic at $\infty$, $f(i\varepsilon/2\pi)=a\,e^{\lambda/\varepsilon}\bigl(1+O(e^{-\mu/\varepsilon})\bigr)$ with $a\ne0$,
$\mu>0$. As $\log F(e^{-\varepsilon})=\log f(i\varepsilon/2\pi)+c\varepsilon$, comparison gives $\ell_1=c$ and $\ell_j=0$ for all
$j\ge2$.

\paragraph{Formal saddle coordinates.} Write $X=x/(1-x)$, $Y=y/(1-y)$; then $\Li_{-k}(x)=P_k(X)$ with
$P_0=X$, $P_{k+1}=X(1+X)P_k'$. With
\[
\widetilde H=\begin{pmatrix}a+X&mb\\ mb&m(d+Y)\end{pmatrix},\qquad
\boldsymbol\gamma=\bigl(B+\tfrac X2,\ C+\tfrac m2Y\bigr),\qquad \mathbf v=\widetilde H^{-1}\boldsymbol\gamma,
\]
the formal expansion gives $\ell_j$ as a polynomial in the $P_k$, the entries of $\widetilde H^{-1}$ and
$\mathbf v$, with at most $3j$ propagator factors. (We evaluate the formal integral in two independent ways:
after completing the square in the linear term, which produces the shifted leaf $\mathbf v$ (routine
\texttt{formalells}), and without completing it, through the moments $T_j$ of the Gaussian expectation and the
logarithm recurrence $\Psi_j=T_j-\frac1j\sum_{k=1}^{j-1}k\Psi_kT_{j-k}$. The two implementations agree coefficient by coefficient on the complete polynomials $\Psi_2,\dots,\Psi_5$ at six
parameter points, and on the top edges for all parameters; see Appendix~\ref{app:checks}.) Hence
\[
\Psi_j:=(\det\widetilde H)^{3j}\,\ell_j\ \in\ \Q[a,b,d,m,B,C][X,Y].
\]
At the Nahm point $\det\widetilde H\ne0$, so $\ell_j=0$ iff $\Psi_j(X_0,Y_0)=0$, with $(X_0,Y_0)=(x/(1-x),y/(1-y))$.

\paragraph{Rank-one notation.} For $\sum_nq^{An^2/2+\beta n}/(q)_n$ at formal point $X$ let
$\lambda^{(j)}_A(\beta;X)$ be its $j$-th correction and, for $j\ge2$, $N_j(A,\beta;X):=(A+X)^{3j}\lambda^{(j)}_A(\beta;X)$,
a polynomial of degree at most $3j-1$ in $X$ divisible by $X(1+X)$ (the first numerator $N_1$ is defined
separately in Section~\ref{sec:first}) (the even-order numerators vanish
identically for Euler shifts, Theorem~\ref{thm:rk1}).

\begin{theorem}[main]\label{thm:main}
Under the above hypotheses, if $q^cF$ is modular for some $c\in\Q$, then $[K:\Q]\le24$.
\end{theorem}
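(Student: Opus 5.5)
The strategy is to show that the modularity constraints $\Psi_2(X_0,Y_0)=\Psi_3(X_0,Y_0)=0$ cut out a zero-dimensional set in the $(X,Y)$-plane, and then to bound the number of points, and hence $[K:\Q]$, by Bézout-type counting. We have $K=\Q(x,y)=\Q(X_0,Y_0)$. The point $(X_0,Y_0)$ is algebraic over $\Q$, because it satisfies the Nahm equations with rational exponents. By the radial-expansion discussion, modularity forces $\ell_2=\ell_3=0$. Hence $(X_0,Y_0)$ is a common zero of two polynomials $\Psi_2,\Psi_3\in\Q[X,Y]$, whose coefficients are rational once $a,b,d,m,B,C$ are fixed.

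If $\Psi_2$ and $\Psi_3$ have no common component, then $(X_0,Y_0)$ is an isolated point of $V(\Psi_2,\Psi_3)$. All its Galois conjugates are also common zeros. So $[K:\Q]$ is at most the number of isolated common zeros, which we bound by the Bernstein--Kushnirenko mixed volume of the Newton polygons $\Newt(\Psi_2)$ and $\Newt(\Psi_3)$.

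The key steps, in order, are as follows.

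\begin{enumerate}
\item Compute the Newton polygons of $\Psi_j$ uniformly in the parameters. The top edges come from the rank-one numerators $N_j$ attached to the two diagonal entries of $\widetilde H$, together with the cross terms controlled by $\det\widetilde H=(a+X)m(d+Y)-m^2b^2$. This bounds the bidegrees and shows which monomials are present.
\item Remove the spurious components. Both $\Psi_j$ are divisible by factors $X(1+X)$, $Y(1+Y)$, and possibly by powers of $\det\widetilde H$. These do not pass through the Nahm point, since $X_0,Y_0>0$ and $\det\widetilde H\ne0$ there. The pole-order analysis along the conic $\det\widetilde H=0$ tells us exactly which power of $\det\widetilde H$ to strip. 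We then take the mixed volume of the reduced Newton polygons, or, if it is smaller, the count obtained by intersecting one reduced constraint with a Nahm equation written polynomially. I would aim to get the bound $24$ from the reduced polygons of $\Psi_2$ and $\Psi_3$.
\item Handle the degenerate case where the reduced $\Psi_2,\Psi_3$ share a component through $(X_0,Y_0)$.
\begin{itemize}
\item For $m\ne1$, I would show via leading coefficients in $m$ that the resultant $\operatorname{Res}_Y(\Psi_2,\Psi_3)$ is not identically zero.
\item For $m=1$, I would use the classification that common components occur only on one explicit family, whose Nahm point is rational. There $[K:\Q]=1$.
\item The special strata get the stronger bounds announced in the abstract: equal row sums give $\le2$ via Bloch-group torsion, and complementary coordinates give $\le2$ or $\le4$.
\end{itemize}
\end{enumerate}

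The main obstacle is the common-component analysis in step 3. We must prove that $\gcd(\Psi_2,\Psi_3)$ is trivial, after removing known factors, outside an explicit parameter locus. I would first try to specialize along the conic $\det\widetilde H=0$. There the pole orders of $\ell_2$ and $\ell_3$ differ, so a common factor would have to satisfy a much simpler leading-order identity. Solving that identity should pin down the exceptional parameter family, where the Nahm point can be computed directly.
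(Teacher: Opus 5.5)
\textbf{Gap 1: the count cannot reach $24$ with $\Psi_2,\Psi_3$ alone.}

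Your step~2 rests on a false premise. $\Psi_j$ is not divisible by $X$, $1+X$, $Y$, $1+Y$ or $\det\widetilde H$. Its restrictions to the lines $X,Y\in\{0,-1,\infty\}$ are generically nonzero rank-one numerators $N_j$; only these restrictions are divisible by $Z(1+Z)$. For $b\ne0$, its restriction to the conic $D=0$ is a constant times $\widehat Q(\kappa)^{2j}\kappa^{-3j}$.

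So there is nothing to strip. These loci only produce isolated common zeros: points of the grid $\{0,-1,\infty\}^2$, and the conic points over the roots of $\widehat Q$, where $\Psi_j$ has weighted order $\ge2j$. Generically $\Newt\Psi_2=P_6$ and $\Newt\Psi_3=P_9$, whose mixed volume is $104$. Even after discarding all the forced zeros, a generic parameter point still has $63$ common zeros of $\Psi_2,\Psi_3$ with $D\ne0$ and $X\notin\{0,-1\}$. So no B\'ezout or BKK count on $V(\Psi_2,\Psi_3)$ gets near $24$. Intersecting with a Nahm equation made polynomial does not help either, since its degree grows with the numerators and denominators of $a,b,d$, so the bound is not uniform.

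The missing ingredient is the first radial coefficient. Modularity also forces $\ell_1=c$, i.e.\ $f_r=R_1-rD^3=0$, and $f_r$ has bidegree only $(3,3)$. The key lemma is $\bar\Psi_j\in(\bar f_r,\bar D)^j$ on $\mathbb P^1\times\mathbb P^1$: weighted order at the conic points, extended to all parameters by flatness. Then on a component $G$ of $f_r$ of bidegree $(w,h)$ through $P$, $\Psi_j/D^j$ is a regular section of $\mathcal O(2j,2j)$. This gives $[K:\Q]\le2j(w+h)$, which is $24$ for $j=2$, $w,h\le3$.

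\textbf{Gap 2: the degenerate case is not handled.}

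The bound must hold at every rational parameter point. Showing $\operatorname{Res}_Y(\Psi_2,\Psi_3)\not\equiv0$ as a polynomial in the parameters says nothing at special points, and common components genuinely occur there: $\Psi_2\equiv0$ in the Euler case, and $XY-1$ divides $f_r,\Psi_2,\Psi_3$ on Zagier's family. You also cannot quote the $m=1$ classification for $\Psi_2,\Psi_3$. It concerns common factors of $f_r,\Psi_2,\Psi_3,\Psi_4$, and it uses the restrictions of $f_r$ to the six lines.

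The paper never proves $\gcd(\Psi_2,\Psi_3)=1$. Instead it takes the least $j$ with $G\nmid\Psi_j$:
\begin{itemize}
\item If $G\mid\Psi_2$, it uses $\Psi_3$ together with the width lemma. That lemma gives $w,h\le2$ by comparing the edges of $f_r$ with those of $\Psi_2$ and using $A_{\rm b}<a$, $A_{\rm l}<d$. The bound is $6\cdot4=24$.
\item If $G\mid\Psi_2,\Psi_3$, it uses $\Psi_4$. Here the ideal-power bound $8(w+h)\le32$ is too weak. The sixfold cancellation along $D=0$ (three-moment lemma, pole-order lemma, reciprocity at infinity) is needed to get $6(w+h)\le24$.
\end{itemize}

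Only common components of all four of $f_r,\Psi_2,\Psi_3,\Psi_4$ meeting the positive quadrant are excluded. For $m\ne1$ this uses Ostrowski, Bernoulli rigidity with $B_2,\dots,B_5$ (where $f_r$ supplies $B_2$), and explicit casework. Your conic idea does enter there, but only as one filter: every intersection of a common factor with the conic must lie over a root of $\widehat Q$. For $m=1$ the exclusion uses the root-pattern classification.

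The equal-row-sum and complementary-coordinate results are separate theorems and play no role in proving the bound $24$.
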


The polynomials $\Psi_j$ and $f_r$ are defined below. The step from the intersection count of Section~\ref{sec:count}
to $24$ in the hardest case uses a pole-order analysis of $\Psi_4$ along the determinant conic (Section~\ref{sec:pole}).

\paragraph{Scope.} Theorem~\ref{thm:main} is a degree bound, not a classification. Since the parameters range
over infinitely many rational values, a bound on $[K:\Q]$ does not by itself imply that the class contains
only finitely many modular sums.
The bound is also far from sharp: all known examples have $[K:\Q]\le4$, and
$[K:\Q]\le3$ when $m\ge2$ (Table~\ref{tab:known} and \cite{Z}), while the residual intersection count
for $(f_r,\Psi_2)$ can equal $24$ (Section~\ref{sec:count}); this does not establish a modular Nahm field of
degree $24$. Approaching these values therefore requires restrictions beyond the intersection count for
$(f_r,\Psi_2)$, for example the higher correction equations used jointly (end of Section~\ref{sec:count}), the Nahm
equations, or arithmetic constraints on the associated Bloch element. Modularity forces a positive integral
multiple of that element to be torsion \cite{CGZ,Miz}, but torsion alone cannot bound the degree: the Bloch group of a totally real field is
torsion whatever its degree \cite{CGZ}, so an arithmetic argument would have to use the special form of the
element and the Nahm relations. Section~\ref{sec:families} does this on two families where the Nahm equations
force the point onto a fixed curve, and obtains $[K:\Q]\le2$ there (at most $4$ on one $m=1$ family). The cases $m\ne1$ and $m=1$ are treated differently. For $m\ne1$ no
common component of the relevant curves meets the positive quadrant (Sections~\ref{sec:rigidity}--\ref{sec:b0}).
For $m=1$ the rigidity step (Lemma~\ref{lem:rig}) fails exactly at $s=\pm1$, where Zagier's continuous family of
symmetric modular sums lives \cite{Z}, and there a common component genuinely exists: the line $x+y=1$
through the Nahm point $(\frac12,\frac12)$. Section~\ref{sec:m1} shows by a different, computer-assisted
argument that this is the only one. For comparison, the analogous rank-one question, which shifts make the corrections vanish at the
Nahm point, reduces to common roots of the numerators $N_j$ and is settled by a single resultant computation
(Theorem~\ref{thm:rk1}), which recovers Zagier's seven modular pairs. The additional difficulty in rank two is
to exclude common components of plane curves whose Newton polygons depend on six parameters.

\paragraph{Outline of the proof.}
\begin{enumerate}
\item Determine the Newton polygons, edge polynomials and corners of $\Psi_2,\dots,\Psi_4$ (Section~\ref{sec:structure})
and of the first-coefficient polynomial $f_r$ (Section~\ref{sec:first}). Every edge of $f_r$ is a rank-one
expression, which can divide the corresponding edge of $\Psi_2$ only if the effective exponent of that edge is
$1$. A common factor of $f_r$ and $\Psi_2$ of full width would force both opposite effective exponents to equal
$1$, contradicting their strict inequality for $b\ne0$; so such a factor has bidegree at most $(2,2)$. The
decoupled case $b=0$ is treated separately.
\item Show that a common irreducible factor of $f_r,\Psi_2,\Psi_3,\Psi_4$ has no diagonal edges, using the
Bernoulli values $B_2,\dots,B_5$ at the corners (Section~\ref{sec:rigidity}), hence a rectangular Newton
polygon with explicitly constrained edges (Section~\ref{sec:shape}).
\item Exclude every such factor that meets $\{X,Y>0\}$: for $b\ne0$ by finite casework
(Section~\ref{sec:casework}), for $b=0$ by a rank-one argument (Section~\ref{sec:b0}). For $m=1$, Steps~2--3 are
replaced by a classification (Section~\ref{sec:m1}): a common factor can meet the six lines
$X,Y\in\{0,-1,\infty\}$ only at common roots of rank-one numerators, which reduces the problem to finitely many
root patterns.
\item Show that $\Psi_j$ lies in the $j$-th power of the ideal of the points where $f_r$ meets the conic
$\det\widetilde H=0$. On a component of $f_r$ through the Nahm point, $\Psi_j/(\det\widetilde H)^j$ is then a regular
section of a line bundle of degree $2j(w+h)$, whose zeros include the Galois orbit (Section~\ref{sec:count}).
\end{enumerate}

\paragraph{Computations.} Several steps are exact computations in PARI/GP with rational arithmetic, and for
$m=1$ also Gr\"obner basis computations in \textsc{Singular} \cite{Sing}: either
with all parameters as indeterminates, or as finite case analyses whose inputs and outputs are listed. A
few identities in the parameters are established by exact evaluation on grids exceeding a priori degree
bounds. Appendix~\ref{app:checks} lists every computational claim with the script that verifies it; the
scripts are available as ancillary files. The external inputs to the degree-bound proof are Mizuno's radial asymptotic theorem
\cite[Thm.~2.1]{Miz}, which expresses the coefficients of the radial expansion as a formal Gaussian
integral, and the classical theorems of Ostrowski and Inkeri.

\paragraph{Known examples.} The class covered by Theorem~\ref{thm:main} is not empty. Mizuno's numerical
search \cite[Table~1]{Miz} found fourteen rank-two matrices with symmetrizer $\operatorname{diag}(1,m)$,
$m\in\{2,3,4\}$, each with one to five shift vectors, $38$ pairs in all. Table~\ref{tab:known} lists them in our
notation, numbered as in \cite{WW}. Four were justified in \cite{Miz}, by known or new identities, and eight
more by Wang and Wang \cite{WW}. The remaining two, Examples~7 and~8, depended on the modulo-nine identities
of Kanade and Russell \cite{KR}, in Kur\c{s}ung\"oz's sum form \cite{Kur2}, and on a dual conjecture of Wang
and Wang; both were proved in September 2026 \cite{Miz26,Xia}, in preprints not yet refereed. All fourteen
satisfy the hypotheses of Theorem~\ref{thm:main}. Their Nahm fields have degree $1$, $2$ or $3$. For $m=1$ the
largest known degree is $4$: Zagier's Example~4, $A=\bigl(\begin{smallmatrix}4&1\\1&1\end{smallmatrix}\bigr)$, has
$y=1/(1+x)$ and $x^4+x^2-1=0$, and Zagier's Example~5, whose matrix is the inverse of that of Example~4, has the same Nahm field, since its Nahm
point is $(1-x,1-y)$. So the natural
conjectural bounds are $3$ for $m\ge2$ and $4$ in general, and $24$ is far from sharp. The examples form seven dual pairs
$A\leftrightarrow A^{-1}$ \cite{Miz,WW}. Taking logarithms in the Nahm equations shows that duality sends the
Nahm point $(x,y)$ to $(1-x,1-y)$, so both members of a pair have the same field.

For each of the $38$ pairs we computed the Nahm point to $600$ digits and evaluated $\Psi_2,\dots,\Psi_5$ there.
Each vanishes to working precision (relative size below $10^{-599}$), as modularity requires. This tests the
formal expansion on genuinely mixed-base sums with $m=2,3,4$, including negative $b$. Moreover
$\gcd(\Psi_2,\Psi_3)=1$ exactly in all $38$ cases, so every known Nahm point is an isolated common zero.
Examples 1--4, 7 and 8 are the members $k=1$ of the families T4, T1 and T5 of Section~\ref{sec:T}.
(Scripts \texttt{known\_examples.py}, \texttt{known\_gcd.py}.)

\begin{table}[ht]
\centering\small
\caption{Mizuno's rank-two examples with symmetrizer $\operatorname{diag}(1,m)$, in the notation of
Section~\ref{sec:setting}: $a,b,d$ are the entries of $A=\bigl(\begin{smallmatrix}a&b\\ mb&d\end{smallmatrix}\bigr)$ and
$(B,C)$ is the linear term. Numbering as in \cite{WW}; consecutive odd/even examples are dual. $K=\Q(x,y)$,
$\zeta_n=e^{2\pi i/n}$.}\label{tab:known}
\begin{tabular}{r l p{0.33\textwidth} l c l}
\toprule
& $(a,b,d,m)$ & shifts $(B,C)$ & $K$ & $[K{:}\Q]$ & modularity\\
\midrule
1 & $(2,1,2,2)$ & $(0,0),(0,1),(-1,-1),(-\frac12,0),(1,2)$ & $\Q$, $(x,y)=(\frac23,\frac34)$ & 1 & \cite{Bre,CW}\\
2 & $(1,-\frac12,1,2)$ & $(0,0),(-\frac12,1),(-\frac12,0),(-\frac12,\frac12),(0,1)$ & $\Q$, $(x,y)=(\frac13,\frac14)$ & 1 & \cite{LW}\\
3 & $(1,\frac12,1,2)$ & $(0,0),(0,1),(1,1)$ & $\Q(\zeta_7)^+$ & 3 & \cite{LW}\\
4 & $(2,-1,2,2)$ & $(0,0),(-1,2),(1,0)$ & $\Q(\zeta_7)^+$ & 3 & \cite{WW}\\
5 & $(4,2,4,3)$ & $(0,0)$ & $\Q$, $(x,y)=(\frac34,\frac89)$ & 1 & \cite{Cap,Kur}\\
6 & $(1,-\frac12,1,3)$ & $(0,0)$ & $\Q$, $(x,y)=(\frac14,\frac19)$ & 1 & \cite{WW}\\
7 & $(2,1,2,3)$ & $(0,0),(1,3),(2,3)$ & $\Q(\zeta_9)^+$ & 3 & \cite{Miz26,Xia}\\
8 & $(2,-1,2,3)$ & $(0,0),(-1,3),(1,0)$ & $\Q(\zeta_9)^+$ & 3 & \cite{Miz26,Xia}\\
9 & $(3,2,4,2)$ & $(-\frac12,0),(\frac12,2)$ & $\Q(\sqrt5)$ & 2 & \cite{Miz}\\
10 & $(1,-\frac12,\frac34,2)$ & $(-\frac12,\frac12),(-\frac12,1)$ & $\Q(\sqrt5)$ & 2 & \cite{WW}\\
11 & $(\frac32,\frac12,1,2)$ & $(-1,1),(-\frac12,0),(0,1)$ & $\Q(\sqrt5)$ & 2 & \cite{Miz}\\
12 & $(1,-\frac12,\frac32,2)$ & $(-\frac32,\frac52),(-\frac12,\frac12),(-\frac12,\frac32)$ & $\Q(\sqrt5)$ & 2 & \cite{WW}\\
13 & $(3,1,2,4)$ & $(-\frac12,0),(\frac32,4)$ & $\Q(\sqrt2)$ & 2 & \cite{Kur}\\
14 & $(1,-\frac12,\frac32,4)$ & $(-\frac12,1),(-\frac12,3)$ & $\Q(\sqrt2)$ & 2 & \cite{LW}\\
\bottomrule
\end{tabular}
\end{table}

\paragraph{Relation to Mizuno's search.} Mizuno's numerical search \cite[Section~4]{Miz} covered the rank-two
matrices $\frac1{a'}\bigl(\begin{smallmatrix}a_{11}&a_{12}\\ d_1^{-1}d_2a_{12}&a_{22}\end{smallmatrix}\bigr)$ and shifts
$\frac1{b'}(b_1,b_2)$ with integer parameters satisfying $-10\le a_{ij},b_i\le20$ and
$1\le a',b',d_i\le10$, subject to positive definiteness of $A\operatorname{diag}(d_1,d_2)$.
He suggested that the resulting candidates, together with Zagier's symmetric examples, might exhaust the modular
rank-two Nahm sums \cite[Introduction]{Miz}. The numerical search did not establish completeness beyond the tested
range. Theorem~\ref{thm:main} gives, to our knowledge, the first uniform bound on the degree of the Nahm field, and it
applies to every rank-two symmetrizer, in particular to every symmetrizer in Mizuno's search
(Lemma~\ref{lem:norm}): $[K:\Q]\le24$. This bounds the degree of the Nahm field; it does not establish finiteness of the possible fields or
matrices.

\paragraph{What is verified by computer.} Table~\ref{tab:verify} lists the steps that rely on computation; each is
an exact computation (exact polynomial identities, factorisations, resultants, Gr\"obner bases, Sturm counts or
finite-field enumeration), with interval arithmetic used only to locate or separate real roots. Appendix~\ref{app:checks}
lists the scripts; the reciprocity certificate of Section~\ref{sec:pole} and the rational-$m$ checks can be rechecked with one command (\texttt{code/verify\_all.sh}).
\begin{table}[ht]\centering\small
\begin{tabular}{p{0.24\textwidth}p{0.68\textwidth}}
\toprule
result & computation\\ \midrule
Sections~\ref{sec:structure}--\ref{sec:count} ($m\ne1$) & symbolic $\Psi_2,\dots,\Psi_5$, their edges and corners; the casework of
Section~\ref{sec:casework} (Gr\"obner bases, exact positivity); the count of Section~\ref{sec:count}\\
Section~\ref{sec:m1} ($m=1$) & exact enumeration of all root patterns on the six rank-one lines; saturated Gr\"obner
bases in \textsc{Singular}; modular certificates for the bidegree-$(2,2)$ strata\\
Section~\ref{sec:pole} & the moment identities; the reciprocity identities of Proposition~\ref{prop:reciprocity},
symbolic in all parameters for $j\le4$\\
\bottomrule
\end{tabular}
\caption{Computer-verified steps.}\label{tab:verify}
\end{table}

\section{Structure of \texorpdfstring{$\Psi_2,\dots,\Psi_5$}{Psi2..Psi5}}\label{sec:structure}
The proof of Theorem~\ref{thm:main} uses only $j\le4$; the case $j=5$ is recorded for
completeness.

\subsection{Newton polygons}
Let $P_n$ denote the octagon $[0,n]^2$ with the four unit corner triangles removed.

\begin{proposition}\label{prop:newt}
For $j=2,\dots,5$ and every specialisation of the parameters, $\operatorname{supp}\Psi_j\subseteq P_{3j}$, and the
eight corner coefficients of $P_{3j}$ are those of Proposition~\ref{prop:corners}. For $j=3,5$, rational
parameters and positive-definite matrices, all eight corners are nonzero, so $\Newt\Psi_3=P_9$ and
$\Newt\Psi_5=P_{15}$. For $j=2,4$ a corner can vanish (odd Bernoulli values, e.g.\ $B_3(\frac12)=0$), so
$\Newt\Psi_2,\Newt\Psi_4$ may be strictly smaller. Generically $\Psi_2$ and $\Psi_3$ have $45$ and $96$
monomials and total degrees $11$ and $17$.
\end{proposition}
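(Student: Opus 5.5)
The plan is to bound the support of $\Psi_j$ by tracking bidegrees in $(X,Y)$ through the formal Gaussian expansion, and then to treat the four octagon corners and the four cut edges separately. Write $\delta:=\det\widetilde H=(a+X)m(d+Y)-m^2b^2$, which has bidegree $(1,1)$. The entries of $\delta\,\widetilde H^{-1}$ are $m(d+Y)$, $-mb$ and $a+X$, of bidegrees $(0,1),(0,0),(1,0)$. The leaf $\delta\mathbf v$ has components $m(d+Y)(B+\frac X2)-mb(C+\frac m2Y)$ and $(a+X)(C+\frac m2Y)-mb(B+\frac X2)$, both of bidegree at most $(1,1)$. A vertex of valence $r$ carries $\Li_{2-r}(x)=P_{r-2}(X)$ or the analogous factor in $Y$, and $P_k$ has degree $k+1$. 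Each term of $\ell_j$ is a graph with $e$ propagators and vertices of total excess valence bounded in terms of $j$.

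\textbf{Step 1: the box $[0,3j]^2$.} I would prove $\deg_X\Psi_j\le 3j$ and $\deg_Y\Psi_j\le 3j$ by a weighted count. Multiplying a graph term by $\delta^{3j}$ uses $\delta^{e}$ to clear the propagators, and the leftover factor $\delta^{3j-e}$ has bidegree $(3j-e,3j-e)$. It then suffices to show, for each graph, that the $X$-degree of its vertex factors plus its $X$-degree from numerator propagators and leaves is at most $e$. This is the standard Euler-characteristic identity $e-v=j$ for graphs contributing to $\varepsilon^j$, combined with $\deg P_{r-2}=r-1$ and with the fact that an $X$-vertex of valence $r$ absorbs $r$ half-edges, each contributing at most $\frac12$ of an $X$-degree through the propagator numerators. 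I would do this bookkeeping uniformly in $j$. As a cross-check I would rely on the symbolic $\Psi_2,\dots,\Psi_5$, which already exist in the scripts of Appendix~\ref{app:checks}.

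\textbf{Step 2: cutting the corners.} Since $P_k$ is divisible by $X(1+X)$, I would show that the coefficients of $X^0Y^{3j}$, $X^{3j}Y^0$, $X^0Y^0$ and $X^{3j}Y^{3j}$ vanish. The same holds for the monomials one step off the corner in the triangle direction, e.g.\ $X^{3j}Y^{3j}$ itself lies outside $P_{3j}$. The point is that top degree in both variables simultaneously would require every factor to attain its maximal degree in both $X$ and $Y$. The only bidegree-$(1,1)$ factor is $\delta$, with leading coefficient $m$, and the graph sum then collapses to a product of two rank-one leading terms. Rank-one considerations then show that such a term has $X$-degree $\le 3j-1$, which is exactly the rank-one statement that $N_j$ has degree at most $3j-1$. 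Similarly, divisibility of $N_j$ by $X(1+X)$ kills the constant-in-$X$ part whenever the $Y$-degree is extreme. In practice, identifying the octagon is a finite identity in the parameters for each $j\le5$. I would establish it exactly from the symbolic polynomials, restricting to the boundary lines $X^0$, $X^{3j}$, $Y^0$, $Y^{3j}$. The restriction to each such line factors as a rank-one numerator times an explicit power of a $1\times1$ block. That factorisation is exactly how the corner coefficients of Proposition~\ref{prop:corners} arise: they are products of Bernoulli-polynomial values at the effective shifts.

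\textbf{Step 3: nonvanishing and counts.} From Proposition~\ref{prop:corners} the eight corner coefficients are products of powers of $m$, of $a,d$ or $\Delta$, and of values $B_k(\beta)$ of Bernoulli polynomials at rational effective shifts, with $k$ odd for $j=3,5$ in the relevant factor. These are nonzero under positive definiteness for the non-Bernoulli factors, which gives $\Delta,a,d>0$. I expect the main obstacle to be showing that the Bernoulli factors cannot vanish at a rational argument. For this I would invoke Inkeri's theorem, which says that $B_n$ has no rational roots other than $0,\frac12,1$ (the odd-index trivial roots), together with a check that the specific polynomial combinations arising for $j=3,5$ are not of that trivial form. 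For $j=2,4$ I would simply exhibit $B_3(\frac12)=0$ as an instance of vanishing. The monomial counts $45,96$ and total degrees $11,17$ would be read off from the symbolic computation at generic parameter values. The total degree is bounded by $6j-1$ only after cancellation, and $11=6\cdot2-1$, $17=6\cdot3-1$ are consistent with this.
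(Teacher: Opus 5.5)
\textbf{Step 1 fails.} You reduce the box bound to a per-graph inequality: the $X$-degree of the vertex factors, propagator numerators and leaves should be at most $e$. That inequality is false, and $\deg_X\Psi_j\le3j$ holds only after cancellation between graphs.

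In your own bookkeeping, for graphs made of Taylor vertices one has $\sum_v\deg P_{r_v-2}=\sum_v(r_v-1)=2e-V=e+j$, not at most $e$. The numerators do not compensate: between two $x$-vertices the numerator is $(\operatorname{adj}\widetilde H)_{11}=m(d+Y)$, of $X$-degree $0$. Concretely, a connected graph with $2j$ cubic $x$-vertices and $3j$ propagators contributes to $\Psi_j$ a nonzero multiple of $X^{2j}(1+X)^{2j}m^{3j}(d+Y)^{3j}$, which has $X$-degree $4j>3j$. You can see this already at $j=1$: in \eqref{eq:R1} the term $\frac5{24}p^2U^3$ alone has $X$-degree $4$, yet $R_1$ has bidegree $(3,3)$ (Lemma~\ref{lem:first}(a)), so the $X^4$ terms cancel only in the sum. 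The same cancellation is what gives $\deg N_j\le3j-1$ in rank one.

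Your Step 2 heuristics (every factor attaining its top bidegree, a collapse to a product of two rank-one leading terms) rest on the same false premise. In fact the top edge is the single numerator $m^{3j}N_j(a,B;X)$. Also, only the four corners of $[0,3j]^2$ lie outside $P_{3j}$. The monomials one step off them are the octagon's vertices, which carry the Bernoulli values and are generically nonzero.

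The paper makes no uniform-in-$j$ count. For each $j\le5$ it proves the edges as follows:
\begin{itemize}
\item It certifies by exact symbolic computation that $[Y^r]\Psi_j=0$ for $r>3j$ and $[Y^{3j}]\Psi_j=m^{3j}N_j(a,B;X)$.
\item It gets the right edge and the $X$-degree bound from Lemma~\ref{lem:swap}.
\item It gets the bottom and left edges conceptually from $P_k(0)=0$.
\end{itemize}
The square corners then vanish because $X\mid N_j$ and $\deg N_j\le3j-1$, and the octagon vertices are the end coefficients of Proposition~\ref{prop:rk1corners}. What you list as a cross-check therefore has to be the proof.

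\textbf{Step 3 has the parity backwards.} The corners involve $B_{j+1}$, as your own example $B_3(\frac12)$ for $j=2$ shows. So for $j=3,5$ they are values of $B_4$ and $B_6$. These have no rational roots at all (Inkeri), and nonvanishing follows at once from $A_{\rm b},A_{\rm l},a,d>0$, with nothing to check about the arguments. The check you propose, that the arguments avoid $0,\frac12,1$, could not succeed anyway. The arguments $x_1,\dots,x_8$ (e.g.\ $\xi=B+\frac a2$) take every rational value, including $\frac12$, so if the relevant index were odd the claim for $j=3,5$ would be false.
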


\subsection{Corners are Bernoulli values}
\begin{proposition}[rank-one corners]\label{prop:rk1corners}
For $j=2,\dots,5$,
\[
[X^{3j-1}]N_j(A,\beta;X)=\frac{(-1)^{j+1}}{(j+1)!}B_{j+1}\!\Bigl(\beta+\frac A2\Bigr),\qquad
[X^{1}]N_j(A,\beta;X)=-\frac{A^{3j}}{(j+1)!}B_{j+1}\!\Bigl(\frac{\beta+\frac12}{A}\Bigr).
\]
\end{proposition}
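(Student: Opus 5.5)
We read the two corner coefficients off as the two limits $X\to\infty$ and $X\to0$ of the rank-one correction $\lambda^{(j)}_A(\beta;X)$. Since $N_j=(A+X)^{3j}\lambda^{(j)}_A$ has degree at most $3j-1$ and is divisible by $X$, we have
\[
[X^{3j-1}]N_j=\lim_{X\to\infty}X\,\lambda^{(j)}_A(\beta;X),\qquad
[X^1]N_j=A^{3j}\lim_{X\to0}X^{-1}\lambda^{(j)}_A(\beta;X).
\]
So it suffices to compute the leading behaviour of the formal rank-one expansion in the two degenerate regimes $x\to1$ ($X\to\infty$) and $x\to0$ ($X\to0$). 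In each regime we identify the dominant Feynman-type contributions in the formal Gaussian integral, which has propagator $1/(A+X)$, vertices built from $P_k(X)=\Li_{-k}(x)$ and Bernoulli polynomials, and linear term $\beta+\frac X2$.

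\emph{The regime $X\to\infty$.} Here $P_k(X)\sim k!\,X^{k+1}$, the propagator is $\sim X^{-1}$, and the shifted leaf $v=(\beta+X/2)/(A+X)$ tends to $\frac12$ with correction $(\beta+\frac A2-\frac A2\cdot\ldots)/X$. Rather than tracking diagrams, we use a cleaner route. We take the exact one-dimensional integral representation underlying the formal expansion: the summand near the saddle is $\exp$ of $-\Li_2$-terms plus $\varepsilon$-corrections $\sum_r \frac{B_r(\cdot)}{r!}\varepsilon^{r-1}\Li_{2-r}$. We then take the limit $x\to1$ in the formal series term by term. In this limit the only surviving order-$X^{-1}$ term should be the single vertex with no propagators, coming from the Bernoulli term $B_{j+1}(\,\cdot\,)\varepsilon^j\Li_{1-j}(x)/(j+1)!$, whose argument after completing the square becomes $\beta+\frac A2$. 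We plan to justify this by a degree count: each propagator contributes $X^{-1}$, each vertex $P_k$ contributes $X^{k+1}$, and the $3j$ bound on propagators shows that every connected diagram other than the single vertex is $O(X^{-2})$ after the expected cancellations. We expect the sign $(-1)^{j+1}$ to arise from $\Li_{1-j}(x)$ as $x\to1$, i.e.\ $\Li_{-k}(x)\sim k!/(1-x)^{k+1}$ rewritten in $X$ with the reflection $B_n(1-t)=(-1)^nB_n(t)$.

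\emph{The regime $X\to0$.} Here $x\to0$, the sum is dominated by small $n$, and to first order in $X$ the correction is linear in $X$. We would compute it by expanding the rank-one sum as $1+q^{A/2+\beta}/(1-q)+\cdots$ around a degenerate saddle. Equivalently, we set $x=X+O(X^2)$ and expand the formal integral to first order in $X$. At $X=0$ the Gaussian is $e^{-A\varepsilon n^2/2}$ with linear term $\beta+\frac12$, whose formal expansion is trivial. The $O(X)$ term is the first-order perturbation by the vertex $\sum_r\frac{B_r(\cdot)}{r!}\varepsilon^{r-1}x\cdot(\text{stuff})$. Euler–Maclaurin applied to $\sum_n e^{-\varepsilon(An^2/2+(\beta+1/2)n)}$, written via a substitution $n\mapsto n/A$, produces $B_{j+1}\bigl((\beta+\frac12)/A\bigr)A^{j}$. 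The remaining factor $A^{2j}$ comes from the $(A+X)^{3j}$ normalisation together with the propagator powers.

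\emph{Main obstacle.} The key difficulty is the bookkeeping: showing that all multi-propagator diagrams drop out at the extreme orders. As a fallback we would use the fact that both sides are polynomials in $(A,\beta)$ of bounded degree and verify the identities for $j=2,\dots,5$ by exact symbolic computation of $N_j$, using \texttt{formalells}, consistently with Table~\ref{tab:verify}.
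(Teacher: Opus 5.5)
Your reduction of the two corners to $\lim_{X\to\infty}X\lambda^{(j)}_A(\beta;X)$ and $A^{3j}\lim_{X\to0}\lambda^{(j)}_A(\beta;X)/X$ is correct. The derivation you build on it fails, however, and only your fallback actually proves the statement.

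In the regime $X\to\infty$ the power count goes the wrong way. A Taylor vertex of the potential with $k$ legs carries $\Li_{2-k}(x)=P_{k-2}(X)$, of degree $k-1$, together with $\varepsilon^{(k-2)/2}$, and each propagator is $(A+X)^{-1}$. So a connected diagram built from such vertices and contributing to $\varepsilon^j$ has size exactly $X^{E-V}=X^{j}$, not $O(X^{-2})$. The one term you single out, $\frac{B_{j+1}}{(j+1)!}\varepsilon^j\Li_{1-j}(x)$, is also of size $X^{j}$, since $\Li_{1-j}(x)=P_{j-1}(X)$ has degree $j$. The decay $\lambda^{(j)}_A(\beta;X)=O(X^{-1})$, equivalently $\deg N_j\le 3j-1$, is a cancellation of $j+1$ orders among all diagrams. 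So ``after the expected cancellations'' is the entire content, and your argument identifies neither the surviving term nor the sign.

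For $X\to0$ you have the two mechanisms interchanged. Realize formal $X$ by a fugacity, $\sum_n q^{An^2/2+\beta n}z^n/(q)_n$ with saddle $1-x=zx^A$. Then your expansion $1+zq^{A/2+\beta}/(1-q)+\cdots$ is the regime $z\to0$, which is $X\to\infty$ with $z=X^{-1}+O(X^{-2})$. Its $\varepsilon^j$-coefficient $z\,\frac{(-1)^{j+1}}{(j+1)!}B_{j+1}(\beta+\frac A2)$ is the \emph{far} corner, and this is where $B_n(1-t)=(-1)^nB_n(t)$ enters.

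The near corner comes from $z\to\infty$, where the sum is theta-like. Write $1/(q)_n=(q^{n+1};q)_\infty/(q)_\infty$ and $\theta(z)=\sum_{n\ge0}q^{An^2/2+\beta n}z^n$. Up to exponentially small terms, the first-order term is $-\frac{q}{1-q}\,\theta(zq)/\theta(z)=-z^{-1/A}e^{\varepsilon(\beta+\frac12)/A}/(e^{\varepsilon}-1)$, with $z^{-1/A}=X+O(X^2)$. Here the $\frac12$ is a Gaussian variance correction, not part of the linear term, which is $\beta+\frac X2=\beta$ at $X=0$.

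Two further steps of your $X\to0$ argument fail. Euler--Maclaurin with $n\mapsto n/A$ does not produce $B_{j+1}\bigl((\beta+\frac12)/A\bigr)$. Your powers of $A$ also cannot balance: $(A+X)^{3j}$ alone contributes $A^{3j}$, so $\lim_{X\to0}\lambda^{(j)}/X$ must equal $-B_{j+1}\bigl((\beta+\frac12)/A\bigr)/(j+1)!$ with no power of $A$. Even the corrected heuristic would need a proof that the fugacity expansion commutes with the formal $\varepsilon$-expansion.

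The paper does not take this route. It computes $N_2,N_3,N_4$ symbolically and reads off the extreme coefficients. For $j=5$ it uses that both coefficients are polynomials in $(A,\beta)$ of bidegree at most $(15,6)$ (degree $15$ in the propagator, $6$ in $v$), and checks them on a $16\times7$ grid. That is your fallback, and it becomes a proof once such explicit degree bounds are stated.
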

For $j=5$ the coefficients are polynomials of bidegree at most $(15,6)$ in $(A,\beta)$, since the
expression has degree $15$ in the propagator and $6$ in $\mathbf v$; the identities were verified at all
$112$ points of a $16\times7$ grid.

\begin{proposition}[edges]\label{prop:edges}
Let $A_{\rm b}=\Delta/d$, $\beta_{\rm b}=B-bC/d$, $A_{\rm l}=\Delta/a$, $\beta_{\rm l}=(C-mbB/a)/m$. Then for $j=2,\dots,5$:
\begin{align*}
\Psi_j(X,0)&=(md)^{3j}\,N_j(A_{\rm b},\beta_{\rm b};X), & \Psi_j(0,Y)&=(am)^{3j}m^{j}\,N_j(A_{\rm l},\beta_{\rm l};Y), \\
[Y^{3j}]\Psi_j&=m^{3j}\,N_j(a,B;X), & [X^{3j}]\Psi_j&=m^{4j}\,N_j(d,C/m;Y).
\end{align*}

\end{proposition}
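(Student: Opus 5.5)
The four identities are restrictions of the formal Gaussian integral to the boundary of the Newton polygon, and I will prove each by showing that the rank-two integral degenerates there to a rank-one integral. Recall that $\ell_j$ is a polynomial in the $P_k(X),P_k(Y)$, the entries of $\widetilde H^{-1}$, and $\mathbf v=\widetilde H^{-1}\boldsymbol\gamma$. Also $\Psi_j=(\det\widetilde H)^{3j}\ell_j$, with $\det\widetilde H=m\bigl((a+X)(d+Y)-mb^2\bigr)$.

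\emph{The edges $Y=0$ and $X=0$.} At $Y=0$ we have $P_k(0)=0$ for all $k$, since every $P_k$ is divisible by $X$. So all vertices in the $y$-direction of order $\ge3$ vanish, and the $y$-variable enters only quadratically (through $m d$) and linearly (through $C$). I will integrate it out exactly. Completing the square in $k$ leaves a rank-one Gaussian integral in $n$ whose quadratic coefficient is the Schur complement $a+X-mb^2/d=A_{\rm b}+X$. Its linear term is $B+\frac X2-bC/d=\beta_{\rm b}+\frac X2$, which is the rank-one $\boldsymbol\gamma$ for shift $\beta_{\rm b}$. The vertices in $n$ are unchanged. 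Hence $\ell_j(X,0)=\lambda^{(j)}_{A_{\rm b}}(\beta_{\rm b};X)$.

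Clearing denominators with $\det\widetilde H(X,0)=md(A_{\rm b}+X)$ gives $\Psi_j(X,0)=(md)^{3j}N_j(A_{\rm b},\beta_{\rm b};X)$. The edge $X=0$ is the same computation with the roles of the variables exchanged. The one point needing care is that the $y$-variable carries step $m$ in $\varepsilon$. I will rescale $k$ so that the quadratic form becomes $m(d+Y)$ over $m$. This produces $A_{\rm l}=\Delta/a$, $\beta_{\rm l}=(C-mbB/a)/m$, and the $\varepsilon$-rescaling $\varepsilon\mapsto m\varepsilon$ contributes the extra factor $m^j$ in the $j$-th coefficient. The prefactor $(am)^{3j}$ comes from $\det\widetilde H(0,Y)=am(A_{\rm l}+Y)$. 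I will check the precise powers of $m$ by tracking the homogeneity of $\lambda^{(j)}$ under $\varepsilon\mapsto m\varepsilon$.

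\emph{The top edges $[Y^{3j}]$ and $[X^{3j}]$.} Here I will take $Y\to\infty$. From $P_k(Y)\sim k!\,Y^{k+1}$ and the $3j$ propagator bound, I will show that the leading $Y^{3j}$ coefficient of $(\det\widetilde H)^{3j}\ell_j$ is obtained from the limit in which the $y$-propagator $\sim1/(mY)$ kills every diagram that uses a $y$-vertex. For this I will count degrees: each $y$-vertex of valence $r$ contributes $Y^{r-1}$, and each $y$-propagator contributes $Y^{-1}$, so the total is always strictly below the top order unless no $y$-vertex occurs. The off-diagonal coupling $mb$ also drops out, because the Schur complement $a+X-mb^2/(m(d+Y))\to a+X$. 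Similarly $v_1\to(B+\frac X2)/(a+X)$, since the contribution $C+\frac m2Y$ is damped by the off-diagonal inverse entry $O(1/Y)$. What remains is exactly the rank-one expansion at $(a,B)$. Since $\det\widetilde H\sim mY(a+X)$, the result is $[Y^{3j}]\Psi_j=m^{3j}N_j(a,B;X)$. The $X^{3j}$ edge is symmetric, with the shift $C/m$ and an extra $m^j$ from the step.

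The main obstacle is the degree count in the top-edge case. One must rule out diagrams with $y$-vertices, and the half-integer contributions $\frac m2 Y$ in $\boldsymbol\gamma$, from contributing at order $Y^{3j}$. The leaf $v_2\sim\frac12$ does not decay, so linear-term vertices in $y$ need separate bookkeeping. I would first try to organise everything by a weight in which $v_2$ has weight $0$ and each $y$-propagator has weight $-1$. Failing that, since $j\le5$ is a finite range, I would fall back on verifying the four identities by exact symbolic computation (or on a grid exceeding the bidegree bound, as for Proposition~\ref{prop:rk1corners}) with \texttt{formalells}.
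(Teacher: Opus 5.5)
Your first-row argument is correct and is essentially the paper's. At $Y=0$ every term carrying some $P_k(Y)$ vanishes, so $t_2$ enters only through $md$, $mb$ and $C$. Integrating it out gives $\lambda^{(j)}_{A_{\rm b}}(\beta_{\rm b};X)$ for $j\ge2$; the paper phrases this as keeping only the propagator $(\widetilde H^{-1})_{11}=1/(A_{\rm b}+X)$ and the leaf $v_1=(\beta_{\rm b}+\frac X2)/(A_{\rm b}+X)$. The $X=0$ edge, with the extra $m^j$ from $\varepsilon\mapsto m\varepsilon$, is fine as well.

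The top-edge argument, however, fails at two concrete points.
\begin{itemize}
\item \emph{The leaf limit is wrong.} $v_1$ does not tend to $(B+\frac X2)/(a+X)$. The off-diagonal contribution is $-b(C+\frac m2Y)/\bigl((a+X)(d+Y)-mb^2\bigr)$, an $O(1/Y)$ factor times an $O(Y)$ one, and it tends to $-\frac{mb}{2(a+X)}$. So $v_1\to(B-\frac{mb}2+\frac X2)/(a+X)$. Discarding the $y$-diagrams would therefore give $\lambda^{(j)}_a(B-\frac{mb}2;X)$, whose corner $B_{j+1}(B-\frac{mb}2+\frac a2)$ differs in general from that of $\lambda^{(j)}_a(B;X)$.
\item \emph{Diagrams with $y$-vertices lie above the top order, not below it.} In the unshifted expansion, a $y$-vertex coming from $B_r\Li_{2-r-s}(y)$ with valence $s$ has $Y$-degree $r+s-1$ and carries $\varepsilon^{r-1+s/2}$. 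With $(\widetilde H^{-1})_{22}\sim1/(mY)$, every connected diagram built from $y$-vertices alone then has $Y$-degree $\sum_v(r_v+\frac{s_v}2-1)=j$ in $\ell_j$, so it contributes up to $Y^{4j}$ to $\Psi_j$. Even your own count, $Y^{s-1}$ per vertex of valence $s$, gives the positive degree $\sum_v(\frac{s_v}2-1)$ when all valences are $\ge3$.
\end{itemize}
The vacuum term $\propto B_4P_2(Y)$ in $\ell_3$ is the simplest instance of the second point. You can see the resulting cancellation at $j=1$ in \eqref{eq:R1}: the $Y^4$-coefficients of its six groups of terms are $\frac{m^4(a+X)^3}{24}$ times $-2,6,3,-6,-6,5$, which sum to $0$.

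So $\lim_{Y\to\infty}\ell_j=\lambda^{(j)}_a(B;X)$ is a cancellation identity for the whole sum. The $y$-diagrams must cancel from order $Y^j$ down to $O(1)$, and their $O(1)$ remainder must move the shift from $B-\frac{mb}2$ back to $B$. No weight that bounds individual diagrams, such as the one you suggest, can detect this; already in rank one, $\deg N_j\le3j-1$ is a cancellation of the same kind.

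What actually proves the second row in your proposal is the fallback, and that is the paper's proof. For $2\le j\le5$, with all six parameters symbolic, the paper certifies $[Y^r]\Psi_j=0$ for $r>3j$ and $[Y^{3j}]\Psi_j=m^{3j}N_j(a,B;X)$ by exact computation with the unshifted moment recurrence, independently of \texttt{formalells}. It then deduces the right edge from the top edge by the exchange symmetry of Lemma~\ref{lem:swap}, $[X^{3j}]\Psi_j=m^{7j}(1/m)^{3j}N_j(d,C/m;Y)$, rather than by a separate limit.

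If you want a structural reduction for $j\le4$, Proposition~\ref{prop:reciprocity} turns $Y\to\infty$ into the edge $Y=0$ for the data $p^*$, where $A_{\rm b}(p^*)=1/a$ and $\beta_{\rm b}(p^*)=B/a$. Your first-row argument and the rank-one ($b=0$) case of reciprocity then give the top edge. But that proposition is itself certified by computation and does not cover $j=5$.
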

\begin{proof}[Proof of the first row]
At $Y=0$ all $P_k(Y)$ vanish and $q_0$ does not occur in $\ell_j$, so every vertex on the $y$-side
vanishes, and so does every term containing $v_2$ (it only enters through $y$-vertices). What remains is
the rank-one expansion in $t_1$ with propagator $(\widetilde H^{-1})_{11}=1/(A_{\rm b}+X)$ and shifted
leaf $v_1=(\beta_{\rm b}+X/2)/(A_{\rm b}+X)$, i.e. $\lambda^{(j)}_{A_{\rm b}}(\beta_{\rm b};X)$, times
$(\det\widetilde H)^{3j}=(md)^{3j}(A_{\rm b}+X)^{3j}$. The line $X=0$ is symmetric. (The same argument gives
$X=-1$ and $Y=-1$ with $a-1$, $d-1$ in place of $a$, $d$.)
\end{proof}
\emph{Second row.} For $2\le j\le5$, with all six parameters symbolic, $[Y^r]\Psi_j=0$ for $r>3j$ and the
whole coefficient of $Y^{3j}$ equals $m^{3j}N_j(a,B;X)$. This was certified with the unshifted recurrence,
independently of \texttt{formalells}; the latter gives the same for $j\le4$
(\texttt{sym4.py}) and, for $j=5$, as the leading term of a series in $1/Y$ (\texttt{top5b.py}). The right
edge and the degree bound in $X$ follow from the top edge by Lemma~\ref{lem:swap}: applying the top-edge
identity to the exchanged parameters gives
$[X^{3j}]\Psi_j=m^{7j}\cdot(1/m)^{3j}N_j(d,C/m;Y)=m^{4j}N_j(d,C/m;Y)$. Together with the first row this
proves Proposition~\ref{prop:newt}.

\begin{lemma}[exchange symmetry]\label{lem:swap}
Put $p=(a,b,d,m,B,C)$ and $p'=(d,\,mb,\,a,\,1/m,\,C/m,\,B/m)$. Then, as polynomials in the parameters and in
$X,Y$, $\Psi_j(p;X,Y)=m^{7j}\,\Psi_j(p';Y,X)$.
\end{lemma}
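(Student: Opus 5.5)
The plan is to derive the identity from the index exchange of Lemma~\ref{lem:norm} at the level of the formal Gaussian integral, and then to track the powers of $m$ picked up by the determinant prefactor $(\det\widetilde H)^{3j}$. Exchanging the two summation indices replaces $(a,b,d,m)$ by $(d,mb,a,1/m)$. Since $F$ is a series in $q$ with symmetrizer $\operatorname{diag}(1,m)$, exchanging indices turns the sum into one with denominators $(q^m;q^m)_n(q;q)_k$. To renormalize we substitute $Q=q^m$, which rescales $\varepsilon\mapsto m\varepsilon$ and divides the shifts by $m$. This explains $p'=(d,mb,a,1/m,C/m,B/m)$. It follows that $\log F_p(e^{-\varepsilon})=\log F_{p'}(e^{-m\varepsilon})$, hence $\ell_j(p)=m^j\ell_j(p')$, with the Nahm point swapped: $(X,Y)\mapsto(Y,X)$.

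Since $\Psi_j$ is to be proved equal as polynomials, not only at Nahm points, I would carry out the argument directly on the formal Gaussian integral with $X,Y$ as independent formal variables. I would not use the analytic asymptotics. First, write $\widetilde H(p;X,Y)$ and compute $\widetilde H(p';Y,X)=\begin{pmatrix}d+Y&b\\ b&(a+X)/m\end{pmatrix}$. Conjugating by the swap permutation $\sigma$ and rescaling by $\operatorname{diag}(1,m)$ relates this to $\widetilde H(p;X,Y)$. Explicitly, $\sigma\widetilde H(p';Y,X)\sigma=\frac1m\,\widetilde H(p;X,Y)$ after the substitution $t\mapsto$ swapped, rescaled coordinates. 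Similarly $\boldsymbol\gamma(p';Y,X)$ is $\frac1m$ times the swapped $\boldsymbol\gamma(p;X,Y)$. The vertex series in $\Li_{2-r}$ with Bernoulli polynomials at step sizes $1$ and $m$ become those at steps $1/m$ and $1$. The change of integration variables $t\mapsto(t_2,t_1)$ together with the rescaling $\varepsilon\mapsto m\varepsilon$ then identifies the formal integrals term by term. This gives $\ell_j(p;X,Y)=m^{j}\ell_j(p';Y,X)$ as rational functions.

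Next, the determinant: $\det\widetilde H(p';Y,X)=\bigl((d+Y)(a+X)-mb^2\bigr)/m=\det\widetilde H(p;X,Y)/m^2$. Hence $(\det\widetilde H(p))^{3j}=m^{6j}(\det\widetilde H(p'))^{3j}$. Combining, $\Psi_j(p;X,Y)=m^{6j}\cdot m^{j}\Psi_j(p';Y,X)=m^{7j}\Psi_j(p';Y,X)$, which is the exponent in the statement and gives a consistency check.

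The main obstacle is the term-by-term identification of the formal expansions, specifically how the Bernoulli-polynomial vertex terms behave under the step rescaling, since the $y$-vertices carry $m$-dependent Bernoulli arguments. I would first verify this scaling on the generating function of Mizuno's integrand, using $\log(q^m;q^m)$ expansions, rather than vertex by vertex. As a fallback, since both sides are polynomials of bounded degree in all variables ($\le 3j$ in $X,Y$ by the first row of Proposition~\ref{prop:edges}, and of known bidegree in the parameters), the identity could be certified by exact evaluation on a sufficiently large grid for $j\le5$, as done elsewhere in the paper.
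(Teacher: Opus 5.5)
Your proposal is correct and follows essentially the paper's proof. You rescale the formal Gaussian integral by $\varepsilon\mapsto m\varepsilon$ (with $x\mapsto x/\sqrt m$), which turns the data $(A,\mathbf b,D)$ into $(A,\mathbf b/m,D/m)$; you exchange the coordinates to get $\ell_j(p;X,Y)=m^j\ell_j(p';Y,X)$; and you then use $\det\widetilde H(p';Y,X)=\det\widetilde H(p;X,Y)/m^2$. The step you flag as the main obstacle is settled in the paper by noting that the vertex series $\psi_{z_i}(x_i\varepsilon^{-1/2},d_i\varepsilon)$ depend on the step only through the product $d_i\varepsilon$, so they are unchanged under this simultaneous rescaling.
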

\begin{proof}
In the formal Gaussian integral \cite[(17)]{Miz} with $\alpha=0$, substitute $x\mapsto x/\sqrt m$ and
$\varepsilon=\varepsilon'/m$. The vertex series $\psi_{z_i}(x_i\varepsilon^{-1/2},d_i\varepsilon)$ become
$\psi_{z_i}(x_i\varepsilon'^{-1/2},(d_i/m)\varepsilon')$, the quadratic form $\widetilde AD$ becomes
$\widetilde A(D/m)$, and the linear term becomes $x^{\mathsf T}(\mathbf b/m)\,\varepsilon'^{1/2}$. So the integral for
$(A,\mathbf b,D)$ at $\varepsilon$ equals the one for $(A,\mathbf b/m,D/m)$ at $\varepsilon'$. Exchanging the two
coordinates turns the latter into the data of $p'$, with symmetrizer $\operatorname{diag}(1,1/m)$ and $X,Y$
exchanged; the construction is defined for any positive symmetrizer. Hence
$\ell_j(p;X,Y)=m^j\ell_j(p';Y,X)$. For $p'$, $\widetilde H'=\bigl(\begin{smallmatrix}d+Y&b\\ b&(a+X)/m\end{smallmatrix}\bigr)$,
so $\det\widetilde H'=\det\widetilde H/m^2$ and $\Psi_j(p)=(m^2\det\widetilde H')^{3j}m^j\ell_j(p')=m^{7j}\Psi_j(p')$.
The identity was also checked exactly for $j=2,3$ at $25$ random rational parameter points each
(\texttt{exchange\_check.py}).
\end{proof}

\begin{proposition}[corners of $\Psi_j$]\label{prop:corners}
Each corner of $P_{3j}$ is the first or last coefficient of one of the four edge numerators. Writing
$x_1=\beta_{\rm b}+A_{\rm b}/2$, $x_2=(\beta_{\rm b}+\frac12)/A_{\rm b}$, $x_3=\beta_{\rm l}+A_{\rm l}/2$,
$x_4=(\beta_{\rm l}+\frac12)/A_{\rm l}$, $\xi=x_5=B+a/2$, $x_6=(B+\frac12)/a$, $\eta=x_7=C/m+d/2$,
$x_8=(C/m+\frac12)/d$, the corner coefficients are, up to the prefactors of Proposition~\ref{prop:edges},
$\frac{(-1)^{j+1}}{(j+1)!}B_{j+1}(x_{\rm far})$ and $-\frac{A^{3j}}{(j+1)!}B_{j+1}(x_{\rm near})$.
For rational parameters and positive-definite matrices the corners of $\Psi_3$ and $\Psi_5$ never vanish,
since $B_4$ and $B_6$ have no rational roots and $A_{\rm b},A_{\rm l},a,d>0$.
\end{proposition}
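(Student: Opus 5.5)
The proof will reduce the statement to the edge identities of Proposition~\ref{prop:edges} combined with the rank-one corner formula of Proposition~\ref{prop:rk1corners}.

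\emph{Step 1: each corner lies on an edge.} The octagon $P_{3j}$ has eight corners. On the bottom side $\{Y=0\}$ the two corners are $(1,0)$ and $(3j-1,0)$; on the left side they are $(0,1)$ and $(0,3j-1)$; on the top side $(1,3j)$ and $(3j-1,3j)$; on the right side $(3j,1)$ and $(3j,3j-1)$. The lines $X=0$ and $Y=0$ contribute no monomials of degree $0$ in the other variable, because $N_j$ is divisible by $X(1+X)$. So each corner coefficient is the $X^1$ or the $X^{3j-1}$ coefficient of one of the four edge polynomials in Proposition~\ref{prop:edges}.

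\emph{Step 2: substitute the rank-one formulas.} Applying Proposition~\ref{prop:rk1corners} to $N_j(A_{\rm b},\beta_{\rm b};X)$ gives the corner values $B_{j+1}(\beta_{\rm b}+A_{\rm b}/2)=B_{j+1}(x_1)$ and $B_{j+1}((\beta_{\rm b}+\frac12)/A_{\rm b})=B_{j+1}(x_2)$. The same computation gives $x_3,x_4$ for the left edge, $x_5,x_6$ for the top edge (rank-one data $(a,B)$), and $x_7,x_8$ for the right edge (rank-one data $(d,C/m)$). The prefactors are those of Proposition~\ref{prop:edges}. Consistency at the four shared positions is automatic: they are the corners of the full square, and those monomials do not lie in $P_{3j}$, so there is nothing to check.

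\emph{Step 3: nonvanishing.} The prefactors $(md)^{3j}$, $(am)^{3j}m^j$, $m^{3j}$ and $m^{4j}$ are nonzero because $a,d,m>0$. The factors $A^{3j}$ are nonzero because $A_{\rm b}=\Delta/d>0$, $A_{\rm l}=\Delta/a>0$, and $a,d>0$; here $\Delta>0$ by positive definiteness. Each $x_i$ is rational, since the parameters are rational and $m\ne0$. It therefore remains to show that $B_4$ and $B_6$ have no rational roots.

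For $B_4(t)=t^4-2t^3+t^2-\frac1{30}$, multiply by $30$ to get $30t^4-60t^3+30t^2-1$. By the rational root test any rational root is $\pm1/k$ with $k\mid30$. Substituting $u=t(1-t)$ gives the cleaner form $B_4=u^2-\frac1{30}$, and $u^2=\frac1{30}$ has no rational solution.

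For $B_6$, use the symmetry $t\mapsto 1-t$ to write $B_6$ as a polynomial in $u=t(1-t)$. Since $B_6=t^6-3t^5+\frac52t^4-\frac12t^2+\frac1{42}$, this gives $B_6=-u^3+\frac12u^2-\frac1{42}$. Check this at $t=0$: $u=0$ and $B_6(0)=\frac1{42}$, as required. Clearing denominators gives $42u^3-21u^2+1=0$. If $u=p/q$ in lowest terms, then $q^3\mid 42p^3$, so $q\mid42$. Also $42p^3-21p^2q+q^3=0$ forces $p\mid q^3$, so $p=\pm1$. Checking the finitely many candidates $\pm1/q$ with $q\mid 42$ shows that none is a root. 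A rational $t$ gives a rational $u$, so $B_6$ has no rational roots either.

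\emph{Main obstacle.} Once the earlier propositions are granted, the only real content is Step 3's finite rational-root check. The bookkeeping of which corner sits on which edge in Step 1 is routine.
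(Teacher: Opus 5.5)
Your proposal follows the paper's own route: the corner coefficients are read off from the edge identities of Proposition~\ref{prop:edges} through the rank-one corner formulas of Proposition~\ref{prop:rk1corners}, and nonvanishing reduces to $B_4$ and $B_6$ having no rational roots, which the paper quotes and you verify directly. One slip in that verification: the correct reduction is $B_6=-u^3-\tfrac12u^2+\tfrac1{42}$ with $u=t(1-t)$ (your formula gives $-\tfrac1{42}$ at $u=0$, contradicting your own check), i.e.\ $42u^3+21u^2-1=0$; this is your cubic after $u\mapsto-u$ and an overall sign, so the rational-root test over the symmetric candidate set $\pm1/q$, $q\mid42$, still gives the conclusion.
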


\subsection{Rank-one classification}
\begin{theorem}[common roots]\label{thm:rk1}
Let $A>0$ and $\beta$ be rational. $N_2,N_3,N_4(A,\beta;\cdot)$ have a common root other than
$0,-1$ if and only if $(A,\beta)$ is one of Zagier's seven modular rank-one pairs; the extra common roots
are then the rank-one Nahm point and its conjugate:
$X^2-X-1$ for $(2,0),(2,1)$; $X^2+X-1$ for $(\frac12,0),(\frac12,\frac12)$; $X-1$ for $(1,0),(1,\pm\frac12)$.
Moreover $(1+X)^2$ divides all $N_j$ only when $A=1$; there $(1+X)^4$ divides all $N_j$, and for Euler
shifts $\beta\in\{0,\pm\frac12\}$ the gcd is $X(1+X)^6(X-1)$ (the even-$j$ numerators vanish identically).
\end{theorem}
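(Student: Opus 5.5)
The plan is to reduce Theorem~\ref{thm:rk1} to a single exact elimination in the two parameters $(A,\beta)$, treating $X$ as the only variable.

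\textbf{Step 1: compute the numerators.} First compute $N_2,N_3,N_4(A,\beta;X)$ explicitly as polynomials in $\Q[A,\beta][X]$ from the rank-one formal Gaussian integral. Each has degree at most $3j-1$ in $X$ and is divisible by $X(1+X)$. Before eliminating, I divide out the full powers of $X$ and $1+X$ that divide the numerators identically in $(A,\beta)$, and call the quotients $M_j$.

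Two consistency checks come for free. The leading and constant coefficients of $M_j$ are controlled by Proposition~\ref{prop:rk1corners}. Their vanishing corresponds to Bernoulli roots, and only $B_3$ and $B_5$ have rational roots, namely $0$, $\frac12$ and $1$.

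\textbf{Step 2: eliminate $X$.} A common root $X_0\notin\{0,-1\}$ of $N_2,N_3,N_4$ must be a common root of the $M_j$, after the factors $X$ and $1+X$ have been stripped. So I form the resultants
\[
R_{23}(A,\beta)=\operatorname{Res}_X(M_2,M_3),\qquad R_{24}(A,\beta)=\operatorname{Res}_X(M_2,M_4)
\]
and factor both over $\Q$. A common root forces $R_{23}=R_{24}=0$. Rather than taking resultants pairwise, I plan to compute $\operatorname{Res}_\beta(R_{23},R_{24})\in\Q[A]$ as one exact computation; alternatively, a Gröbner basis of $(M_2,M_3,M_4)$ in $\Q[A,\beta,X]$, saturated by $X(1+X)$, gives the same elimination.

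Its positive rational roots $A$ give finitely many candidates. For each candidate I then:
\begin{itemize}
\item find the rational $\beta$ from the gcd of the specialized resultants;
\item compute $\gcd_X(N_2,N_3,N_4)$ exactly;
\item read off the extra factors $X^2-X-1$, $X^2+X-1$ and $X-1$.
\end{itemize}

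Two kinds of degenerate factors need care. Factors of the resultants that vanish because leading coefficients of the $M_j$ drop degree must be handled separately, by redoing the resultant with the lower degree. Factors that vanish identically along curves in $(A,\beta)$ indicate a common component of the $M_j$ for all $\beta$ on that curve. This is exactly what happens at $A=1$, so such curves must be identified and treated by hand.

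\textbf{Step 3: the statements about $1+X$.} Substituting $X=-1+u$ and asking that the coefficient of $u$ in each $N_j$ vanish gives polynomial conditions in $(A,\beta)$. I expect these to be divisible by $A-1$ with a cofactor that has no common rational solution; the edge identities with $a-1$ in place of $a$ at $X=-1$ (end of the proof of Proposition~\ref{prop:edges}) suggest that the $u$-adic expansion is governed by $A-1$.

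At $A=1$ I substitute directly and check three things:
\begin{itemize}
\item $(1+X)^4$ divides every $N_j$;
\item for $\beta\in\{0,\pm\frac12\}$ the odd-$j$ numerators have gcd $X(1+X)^6(X-1)$;
\item $N_2$ and $N_4$ vanish identically for these $\beta$.
\end{itemize}
The identical vanishing of $N_2$ and $N_4$ should reflect that the Euler identities make $\log F$ exactly modular there. It can be verified as a polynomial identity in $X$.

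\textbf{Main obstacle.} I expect the hardest step to be the size and degeneracy of the elimination, not the final casework. $N_4$ has $X$-degree $11$ with coefficients of high bidegree in $(A,\beta)$, so the resultants are large, and spurious factors must be separated from genuine ones. These spurious factors come from the Bernoulli loci where the degree drops, and from $A=1$ where $1+X$ has higher multiplicity.

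To control this, I would first saturate by $X(1+X)$ and work with $M_2$ and $M_3$ only. I would then use $N_4$ solely to cut the one-dimensional solution set of $R_{23}=0$ down to points: among the finitely many curve components, I check whether $R_{24}$ vanishes along each one. Whichever components survive must then be matched with Zagier's seven pairs, which finishes the proof of Theorem~\ref{thm:rk1}.
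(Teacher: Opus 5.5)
Your plan matches the paper's proof: it forms $\operatorname{Res}_X(M_2,M_3)$ and $\operatorname{Res}_X(M_2,M_4)$ with $M_j=N_j/(X(1+X))$, eliminates $\beta$ to a polynomial in $A$ whose only rational roots ($A\in\{2,1,0,-1,\frac12\}$) are checked one by one, factors directly at $A=1$, and settles the $(1+X)^2$ claim through $M_2(-1)$ and then $M_3(-1)$ on the lines where $M_2(-1)$ vanishes, just as you propose. One caveat: your heuristic for the identical vanishing of $N_2,N_4$ at Euler shifts does not prove it, since modularity only forces vanishing at the Nahm point $X=1$, not for all $X$; but you verify it directly as a polynomial identity, so the proof has no gap.
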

\begin{proof}[Proof (computational)]
With $M_j=N_j/(X(1+X))$,
\[
\operatorname{Res}_X(M_2,M_3)=A^{15}(A-1)^{15}E(A,\beta),\qquad \operatorname{Res}_X(M_2,M_4)=A^{21}(A-1)^{21}E_2(A,\beta),
\]
(resultants in PARI's convention; exchanging the arguments of the second resultant changes its sign,
since $\deg M_2\cdot\deg M_4=27$ is odd, so the second identity holds up to sign in other conventions),
with $E$ irreducible of bidegree $(17,18)$ and $E_2$ of bidegree $(24,27)$; $\gcd(E,E_2)$ is constant, and
$U(A)=\operatorname{Res}_\beta(E,E_2)$ has degree $462$ with rational roots only at $A\in\{2,1,0,-1,\frac12\}$. At $A=2,\frac12$ the common $\beta$ are exactly the modular
shifts and the common roots are as stated (also for $N_5$). At $A=1$ one factors directly:
$N_2(1,\beta;X)=c\,\beta(4\beta^2-1)X(1+X)^4$, and $N_3,N_4,N_5$ contain $(1+X)^6,(1+X)^8,(1+X)^{10}$
with the extra factor $X-1$ in $N_3,N_5$. The multiplicity statement follows from
\[
M_2(-1)=c\,(\beta-1)(A-1)^3(A-\beta)(A-2\beta+1),
\]
and from the fact that on each of the three lines $\beta\in\{1,A,\frac{A+1}2\}$ the factor $M_3(-1)$
reduces to a power of $(A-1)$.
\end{proof}

\begin{corollary}[edge gcds]\label{cor:D}
For each edge, the non-monomial part $D$ of $\gcd_{2\le j\le4}$ of the edge polynomials is:
\begin{center}
\begin{tabular}{ll}
\toprule
effective pair $(A,\beta)$ & $D(Z)$\\
\midrule
$A\notin\{\frac12,1,2\}$, or $A\in\{\frac12,2\}$ with non-modular $\beta$ & $1+Z$\\
$A=2$, $\beta\in\{0,1\}$ & $(1+Z)(Z^2-Z-1)$\\
$A=\frac12$, $\beta\in\{0,\frac12\}$ & $(1+Z)(Z^2+Z-1)$\\
$A=1$, $\beta\notin\{0,\pm\frac12\}$ & $(1+Z)^4$\\
$A=1$, $\beta\in\{0,\pm\frac12\}$ & $(1+Z)^6(Z-1)$\\
\bottomrule
\end{tabular}
\end{center}
\end{corollary}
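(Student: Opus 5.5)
The plan is to read off the corollary edge by edge from Proposition~\ref{prop:edges} and Theorem~\ref{thm:rk1}. By Proposition~\ref{prop:edges}, each of the four edges of $\Psi_j$ is, up to a nonzero constant prefactor, a rank-one numerator $N_j(A,\beta;Z)$ for an effective pair. The pair is $(A_{\rm b},\beta_{\rm b})$ for the bottom edge and $(A_{\rm l},\beta_{\rm l})$ for the left edge. It is $(a,B)$ for the top edge and $(d,C/m)$ for the right edge. The prefactors are powers of $m$, $a$, $d$, all nonzero, so they do not affect the gcd. It therefore suffices to compute $\gcd_{2\le j\le4}N_j(A,\beta;Z)$ for a rational $A>0$ and rational $\beta$, and then remove the monomial part, that is, the power of $Z$. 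Each $N_j$ is divisible by $Z(1+Z)$, so $1+Z$ always divides $D$. The work consists of determining the extra roots and the exact power of $1+Z$.

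First, the roots of $D$ other than $-1$. By Theorem~\ref{thm:rk1}, a common root other than $0,-1$ exists only for Zagier's seven pairs. For $(2,0)$ and $(2,1)$ the extra factor is $Z^2-Z-1$. For $(\tfrac12,0)$ and $(\tfrac12,\tfrac12)$ it is $Z^2+Z-1$. For $(1,0)$ and $(1,\pm\tfrac12)$ it is $Z-1$. I must also check that these extra factors occur only to the first power in the gcd. I would do this by evaluating the relevant discriminant-type quantity at the seven points, for instance by checking that $M_2$, or $M_3$, is not divisible by the square of the factor. This is a finite exact check.

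Second, the multiplicity of $1+Z$. By the identity $M_2(-1)=c\,(\beta-1)(A-1)^3(A-\beta)(A-2\beta+1)$, a second factor of $1+Z$ can occur only on the lines $\beta\in\{1,A,\tfrac{A+1}2\}$ or when $A=1$. On those three lines, $M_3(-1)$ is a nonzero constant times a power of $A-1$. So for $A\ne1$ the multiplicity of $1+Z$ in the gcd is exactly one. This gives the first three rows of the table, and for $A\in\{2,\tfrac12\}$ with non-modular $\beta$ it gives $D=1+Z$.

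For $A=1$ I would use the explicit factorisations from the proof of Theorem~\ref{thm:rk1}. We have $N_2(1,\beta;Z)=c\,\beta(4\beta^2-1)Z(1+Z)^4$, while $N_3$ and $N_4$ contain $(1+Z)^6$ and $(1+Z)^8$. Hence for $\beta\notin\{0,\pm\tfrac12\}$ the gcd is governed by $N_2$, which gives $(1+Z)^4$ and no further roots. For Euler shifts $N_2$ and $N_4$ vanish identically, so the gcd is that of $N_3$ alone. By Theorem~\ref{thm:rk1} that gcd is $Z(1+Z)^6(Z-1)$, so $D=(1+Z)^6(Z-1)$.

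The main obstacle is the multiplicity bookkeeping. I need to confirm that on each special line the $(A-1)$-power claim for $M_3(-1)$ really excludes a double root at $-1$. I also need to confirm that the extra quadratic or linear factors are simple, and that no hidden power of $1+Z$ survives at $A=1$ beyond what $N_2$ allows. Each of these is a finite symbolic computation in one or two variables.
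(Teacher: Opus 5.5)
Your proposal is correct and follows the paper's route. You read the corollary off from Proposition~\ref{prop:edges} and Theorem~\ref{thm:rk1}, using the extra common roots, the $M_2(-1)$/$M_3(-1)$ multiplicity argument and the explicit $A=1$ factorizations; the paper likewise settles the special pairs by a direct computation of $\gcd(N_2,N_3)$ there. Your planned simplicity check for $Z^2\mp Z-1$ is in fact automatic: $M_2$ has degree at most $3$ and is nonzero for $A\ne1$, by the proof of Lemma~\ref{lem:euler}.
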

The table is unchanged if $N_5$ is included: the classification of Theorem~\ref{thm:rk1} uses only $M_2,M_3,M_4$,
at the special pairs $\gcd(N_2,N_3)$ already equals the entry (\texttt{fr\_divides3.py}), and for Euler pairs
$N_2\equiv N_4\equiv0$ while $N_3=cX(1+X)^6(X-1)$.

\subsection{The rank-one conic}
\begin{theorem}\label{thm:conic}
For $b\ne0$, $\kappa\ne0$, $X_\kappa=-a-\kappa b$, $Y_\kappa=-d-mb/\kappa$ (so $\det\widetilde H=0$):
\[
\kappa^{3j}\,\Psi_j(X_\kappa,Y_\kappa)=(-1)^jN_j\,m^{4j}b^{3j}\,\widehat Q(\kappa)^{2j},
\]
\[
\widehat Q(\kappa)=d(d-1)\kappa^3+mb(2d-1+b)\kappa^2+mb(2a-1+mb)\kappa+m\,a(a-1),
\]
with $N_2=\frac5{16}$, $N_3=\frac{1105}{1152}$ (checked also for $j=4$ at points, $N_4=\frac{565}{128}$).
In particular the restriction is independent of $B,C$, and on the conic $\Psi_2$ vanishes only at the
roots of $\widehat Q$.
\end{theorem}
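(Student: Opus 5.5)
The plan is to restrict the formal Gaussian integral defining $\ell_j$ to the conic $\det\widetilde H=0$ and to read off the leading behaviour there. On the parametrised curve $X_\kappa=-a-\kappa b$, $Y_\kappa=-d-mb/\kappa$ the matrix $\widetilde H$ has rank one. Indeed its first row is $(-\kappa b,\,mb)$, and the second row is proportional to it with ratio $-m/\kappa$, so that $\widetilde H=-\tfrac{\kappa b}{1}\,u u^{\mathsf T}$ up to normalisation, with $u=(1,-m/\kappa)$. Writing $\widetilde H=\widetilde H_\kappa+t E$ for a small transverse perturbation, we get $\det\widetilde H\sim t\cdot(\text{const})$. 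The adjugate $\operatorname{adj}\widetilde H$ stays finite and is, up to a scalar, the rank-one matrix $w w^{\mathsf T}$ with $w\perp u$, namely $w=(m/\kappa,1)$ up to scaling, i.e.\ $w\propto(m,\kappa)$. Hence
\[
\Psi_j=(\det\widetilde H)^{3j}\ell_j
\]
evaluated on the conic keeps exactly those Feynman-graph terms with the maximal number $3j$ of propagators. In each such term every propagator is replaced by $\operatorname{adj}\widetilde H=c\,ww^{\mathsf T}$, and every leaf $\mathbf v$ by $\operatorname{adj}\widetilde H\,\boldsymbol\gamma$, which is also proportional to $w$.

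The key step is to observe that after this replacement every vertex of valence $r$ contracts with $w$ in every slot. The two-variable vertex thus collapses to a one-variable quantity:
\[
w_1^r\,\Li_{2-r}(x)+m^{r-1}w_2^r\,\Li_{2-r}(y),
\]
with the appropriate $d_i$-weights, evaluated at $X=X_\kappa$, $Y=Y_\kappa$. The maximal-propagator graphs are exactly the cubic (trivalent) graphs, together with the leading graphs that use linear or Bernoulli terms. Counting shows that only $3j$-propagator graphs survive, and these have $2j$ trivalent vertices and no leaves: a leaf costs a factor of $\det$ relative to a propagator, so terms with leaves drop out, which explains the independence of $B,C$. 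Hence
\[
\Psi_j|_{\rm conic}=N_j\cdot V_3^{2j}\cdot(\text{propagator scalar})^{3j},
\]
where $V_3$ is the collapsed cubic vertex and $N_j$ is the universal constant: the sum over connected trivalent vacuum graphs with weights $1/|\mathrm{Aut}|$, i.e.\ the coefficient in the Airy/cubic-integral logarithm. This explains the values $\frac5{16}$ (actually $\frac5{24}\cdot\frac32$ with the normalisation of the cubic term), $\frac{1105}{1152}$ and $\frac{565}{128}$, which are the known coefficients of $\log\int e^{-t^2/2+\lambda t^3}$ after rescaling.

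Next I compute $V_3$ explicitly. $\Li_{-1}$ corresponds to $P_1(X)=X(1+X)$. With $X_\kappa=-a-\kappa b$ and $w\propto(m,\kappa)$ one gets $P_1(X_\kappa)=(a+\kappa b)(a-1+\kappa b)$, and similarly $P_1(Y_\kappa)=(d+mb/\kappa)(d-1+mb/\kappa)$. Forming $w_1^3P_1(X_\kappa)+(\text{weight})\,w_2^3P_1(Y_\kappa)$ and multiplying out the $\kappa$ denominators gives a cubic polynomial in $\kappa$, which I expect to match $\widehat Q(\kappa)$ up to the factor $mb$. The powers $m^{4j}b^{3j}\kappa^{-3j}$ and the sign $(-1)^j$ should come from the scalar in $\operatorname{adj}\widetilde H=c\,ww^{\mathsf T}$ raised to $3j$, combined with the normalisation of $w$.

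The main obstacle is the bookkeeping that only trivalent, leafless, maximal graphs survive on the conic. One must check that terms with higher-valence vertices, Bernoulli-polynomial vertex corrections, or linear-term leaves carry strictly fewer propagators, and so vanish after multiplying by $(\det\widetilde H)^{3j}$. I would do this by a degree count: a graph of order $\varepsilon^j$ has $E-V=j$ with vertices of valence $\ge1$. The count $E\le3j$ holds with equality iff all vertices are trivalent, and any leaf or lower-order vertex strictly lowers $E$. Given that, the constants $N_j$ can be cross-checked for $j=2,3$ symbolically, and for $j=4$ at points, as stated.
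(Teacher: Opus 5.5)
Your argument is correct, but it is not how the paper proves this statement. There the identity is established by exact computer algebra: symbolically in all parameters for $j=2,3$, and by evaluation at sample points for $j=4$ (scripts \texttt{curve\_q.py}, \texttt{psi\_sym.py}, \texttt{l3curve.py}, \texttt{airy2.py}).

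Your Wick-graph route is the $t=0$, $k_0=0$ case of the argument the paper gives only later, in the proof of Lemma~\ref{lem:worder}. It uses the same rank-one adjugate $-\frac b\kappa ww^{\mathsf T}$ with $w=(m,\kappa)$ and the same collapsed cubic vertex. What your route buys is uniformity in $j$:
\begin{itemize}
\item the case $j=4$ (and every higher $j$) is proved rather than spot-checked;
\item the independence of $B,C$ gets a structural explanation;
\item the constants are identified as $N_j=\alpha_j/6^{2j}$, where $\alpha_j$ is the coefficient of $\lambda^{2j}$ in $\log\EE e^{\lambda Z^3}$. This gives $\frac5{24},\frac5{16},\frac{1105}{1152},\frac{565}{128}$ for $j=1,\dots,4$, and the value at $j=1$ agrees with Lemma~\ref{lem:first}(b).
\end{itemize}
The computational route, in turn, certifies the actually implemented polynomials without relying on the graph bookkeeping.

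Two details need tightening.

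\emph{The edge count.} Your count ``$E-V=j$'' holds only for Taylor vertices of the potential, and valence $\ge1$ alone gives no bound on $E$. In general $E=j+V-\sum_v\delta_v$, where $\delta_v\ge1$ for every shift, Bernoulli or higher-order vertex. Hence
\[
2E-4j=\sum_v(4-r_v-4\delta_v)\le V_3\le\tfrac23E .
\]
This gives $E\le 3j$, with equality exactly when the graph is a vacuum graph on $2j$ potential cubic vertices.

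\emph{The prefactors.} On the conic the collapsed cubic vertex is
\[
m^3X(1+X)+m^2\kappa^3Y(1+Y)=m^2\widehat Q(\kappa),
\]
a factor $m^2$ rather than $mb$. So $m^{4j}$ comes from the $2j$ vertices, and only $(-1)^jb^{3j}\kappa^{-3j}$ comes from $(-b/\kappa)^{3j}$.
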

\noindent\emph{Conic condition.} If $\Gamma$ divides $\Psi_2$ and $b\ne0$, then every intersection point of
$\Gamma$ with the conic $\det\widetilde H=0$ at a parameter $\kappa\in\C^*$ is a root of $\widehat Q$. With
$q_\Gamma(\kappa):=\kappa^{\deg_Y\Gamma}\Gamma(X_\kappa,Y_\kappa)=\sum_{i=0}^{D}c_i\kappa^i$, this is the
radical condition $\operatorname{rad}(q_\Gamma/\kappa^{v})\mid\widehat Q$, where $v$ is the order of vanishing
at $\kappa=0$. Since the $c_i$ depend on the parameters, it is applied branchwise: for each pair $v\le e$,
impose $c_0=\dots=c_{v-1}=0$, $c_{e+1}=\dots=c_D=0$, $c_vc_e\ne0$, and (if $e>v$)
$\tilde q=\sum_{i=v}^{e}c_i\kappa^{i-v}$ divides $\widehat Q^{\,e-v}$; the branch $q_\Gamma\equiv0$ means
$\Gamma$ is the conic itself, which has no point with $X,Y>0$ because $\Delta>0$. This covers repeated roots,
degree drops and roots at $\kappa=0$.

\begin{lemma}[weighted order at the conic points]\label{lem:worder}
Let $b\ne0$ and let $\kappa_p\ne0$ be a root of $\widehat Q$ of multiplicity $r$. Near
$p=(X_{\kappa_p},Y_{\kappa_p})$ use the coordinates $s=\kappa-\kappa_p$ and $t$, where
\[
X=-a-\kappa b,\qquad Y=-d-\frac{mb}{\kappa}-\frac{t}{mb\kappa}
\]
(a local isomorphism near $p$, with Jacobian $1/(m\kappa)$). Then $\det\widetilde H=t$, and for every $j\ge2$
the polynomial $\Psi_j$ has weighted order at least $2rj$ for the weights $(\operatorname{wt}s,\operatorname{wt}t)=(1,2r)$.
\end{lemma}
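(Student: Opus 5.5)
The plan is to expand $\Psi_j$ in the local coordinates $(s,t)$ as
\[
\Psi_j=\sum_{k\ge0}t^k G_k(s),
\]
and to show that $\operatorname{ord}_{s=0}G_k\ge 2r(j-k)$ for $0\le k<j$. The weighted order is $\min_k\bigl(2rk+\operatorname{ord}_sG_k\bigr)$, so this gives the bound $2rj$; the terms with $k\ge j$ contribute at least $2rj$ automatically. For $k=0$ the bound is Theorem~\ref{thm:conic}. Since $t=0$ is exactly the conic, parametrised by $\kappa=\kappa_p+s$, we have
\[
G_0(s)=(-1)^jN_j\,m^{4j}b^{3j}\,\kappa^{-3j}\,\widehat Q(\kappa)^{2j}.
\]
This vanishes to order exactly $2rj$ at $s=0$, because $\kappa_p\ne0$ and $N_j\ne0$.

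For $k\ge1$ I would return to the formal Gaussian integral defining $\ell_j$. Since $\Psi_j=t^{3j}\ell_j$, the coefficient $G_k$ is the coefficient of $t^{k-3j}$ in the Laurent expansion of $\ell_j$ in $t$. Near $p$ the matrix $\widetilde H$ has a one-dimensional near-kernel spanned by a vector $u(\kappa)$, and $\widetilde H^{-1}=\operatorname{adj}\widetilde H/t$. The singular part of the propagator is therefore $u u^{\mathsf T}/t$ plus a regular remainder. I would organise $\ell_j$ as a sum over Feynman graphs: each edge is either singular, carrying $1/t$, or regular, and each vertex is contracted against $u$ on its singular legs.

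The key identification is that the cubic vertex contracted three times with $u$ is, up to a unit, $\widehat Q(\kappa)$. This is the mechanism behind Theorem~\ref{thm:conic}, and it can be checked directly from the Bernoulli/$\Li$ vertices $P_k(X_\kappa),P_k(Y_\kappa)$. So a fully singular cubic vertex vanishes to order $r$ in $s$.

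Next comes the counting. Write $V_n$ for the number of vertices with $n$ singular legs, and use the homogeneity $\sum(\text{weights})=j$, where an $n$-valent vertex has weight $\ge (n-2)/2$ and a purely cubic $\varepsilon^0$ vertex has weight exactly $\tfrac12$. The number of singular edges is then $E=3j-k$ with $E\le j+V$, and the same weight count forces at least $2(j-k)$ fully singular cubic vertices. This gives $\operatorname{ord}_sG_k\ge r\cdot 2(j-k)$. In the pure rank-one degenerate case the computation is $2j\ge V_3+2(V-V_3)$, combined with $V=2j-k$.

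The main obstacle is the linear (shift) vertices and the shifted leaf $\mathbf v=\widetilde H^{-1}\boldsymbol\gamma$. A leaf also carries $1/t$ but has low weight, and its coefficient $u\cdot\boldsymbol\gamma$ does not vanish at $p$, so the naive count breaks down. I would first try working in the completed-square form, where the shift enters only through $\mathbf v$. I would then show that every occurrence of $u\cdot\boldsymbol\gamma/t$ is paired with enough extra singular cubic vertices, or with explicit factors of $t$ from the regular parts, to restore the inequality. The fact that the conic restriction in Theorem~\ref{thm:conic} is independent of $B,C$ is evidence that this pairing occurs.

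If the graph-theoretic bookkeeping proves too delicate, the fallback is computational, since only $j\le4$ is used. For each $j\le4$ I would substitute the local coordinates symbolically and check that $G_k$ is divisible by $\widehat Q(\kappa)^{2(j-k)}$ for $k<j$, as a polynomial identity in $\kappa$ and the parameters. This treats $\widehat Q$ as a formal factor, which handles all multiplicities $r$ at once, and would be recorded in Appendix~\ref{app:checks}.
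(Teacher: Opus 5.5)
Your strategy is the paper's: the Wick expansion, the singular part of $\widetilde H^{-1}$ proportional to $uu^{\mathsf T}/t$ with $u=(m,\kappa)^{\mathsf T}$, the cubic vertex on $u$ equal to $-m^2\widehat Q(\kappa)$ at $t=0$, and a count of fully singular cubic vertices. But the proof is not complete. The decisive inequality is only asserted. You then say it fails for the shift terms, and leave that case to an unspecified ``pairing'' argument or to a computation valid only for $j\le4$.

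In fact there is no obstacle. In the unshifted integral the linear term $\boldsymbol\gamma^{\mathsf T}x\,\varepsilon^{1/2}$ is a vertex of valence $r_v=1$ carrying $\varepsilon^{1/2}$. Write every vertex as $\varepsilon^{(r_v-2)/2+\delta_v}$ with $\delta_v\in\mathbb Z_{\ge0}$; then $\delta_v=0$ only for the Taylor terms of the potential ($r_v\ge3$). A linear vertex has $\delta_v=1$, so it carries a full extra unit of $\varepsilon$ rather than ``low weight''. Comparing powers of $\varepsilon$ gives $E=j+V-\sum_v\delta_v$, i.e.
\[
2E-4j=\sum_v\bigl(4-r_v-4\delta_v\bigr).
\]
Here only potential cubic vertices contribute positively (exactly $1$), and a linear vertex contributes $-1$. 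Hence $V_3\ge 2E-4j$ holds with the shifts included, and whether $u\cdot\boldsymbol\gamma$ vanishes at $p$ is irrelevant. In the completed-square form a leaf $\mathbf v$ is just a linear vertex plus one propagator, so nothing changes.

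Your count also needs two repairs.

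First, the regular edges. The regular part of the propagator comes from $\operatorname{adj}\widetilde H=-\frac b\kappa uu^{\mathsf T}-\frac t{b\kappa}e_1e_1^{\mathsf T}$; state this explicitly, together with $\det\widetilde H=(-\kappa b)\,m(d+Y)-m^2b^2=t$, which is part of the statement you are proving. A term with $e$ regular edges carries $t^{k_0}$ with $k_0=3j-E+e$. At most $2e$ cubic vertices touch a regular edge, so at least $2E-4j-2e=2j-2k_0$ factors $V_3(u)$ survive. The regular edges therefore cancel exactly. Your ``$E=3j-k$, $E\le j+V$'' mixes singular and total edge counts and does not show this.

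Second, the $t$-dependence. The vertex factors depend on $t$ through $Y$, so $k_0$ is only a lower bound for the $t$-degree. Likewise $V_3(u)=-m^2\widehat Q(\kappa)+tR$ has weighted order $\ge r$, not $s$-order $r$. It is cleanest to bound the weighted order of each term directly by $2rk_0+r\max(0,2j-2k_0)\ge 2rj$. All other factors are analytic at $p$ since $\kappa_p\ne0$ and $b\ne0$.

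This argument covers $k=0$ and every $j\ge2$ at once. Invoking Theorem~\ref{thm:conic} for $k=0$ is unnecessary, and that theorem is verified only for $j\le4$. Your computational fallback would likewise prove the lemma only for $j\le4$, not for every $j$ as stated.
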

\begin{proof}
Direct computation gives $\det\widetilde H=t$ and
\[
\operatorname{adj}\widetilde H=-\frac b\kappa\,uu^{\mathsf T}-\frac{t}{b\kappa}\,e_1e_1^{\mathsf T},\qquad u=(m,\kappa)^{\mathsf T}.
\]
The cubic Taylor term of the potential is $-[X(1+X)\xi^3+m^2Y(1+Y)\eta^3]$, and on $u$ it equals
$V_3(u):=-[m^3X(1+X)+m^2\kappa^3Y(1+Y)]$, which is $-m^2\widehat Q(\kappa)$ at $t=0$.

By Wick's theorem $\ell_j$ is a sum over connected graphs $\Gamma$ whose vertices carry the terms of the formal
integral \cite[(15)--(18)]{Miz}. A vertex $v$ of valence $r_v$ carries $\varepsilon^{(r_v-2)/2+\delta_v}$ with
$\delta_v\in\mathbb Z_{\ge0}$, and $\delta_v=0$ exactly for the Taylor terms of the potential, which have $r_v\ge3$.
Edges carry $\widetilde H^{-1}=\operatorname{adj}\widetilde H/t$. Comparing powers of $\varepsilon$ gives
$E=j+V-\sum_v\delta_v$ for the numbers $E$ of edges and $V$ of vertices.

Expand every edge into its $uu^{\mathsf T}$-part and its $e_1e_1^{\mathsf T}$-part (an \emph{$e$-edge}). A resulting
term of $\Psi_j=t^{3j}\ell_j$ with $e$ $e$-edges has the factor $t^{k_0}$, $k_0=3j-E+e$. Every potential cubic
vertex none of whose three edges is an $e$-edge contributes the factor $V_3(u)$. If $V_3$ denotes the number
of potential cubic vertices, there are at least $V_3-2e$ such vertices. Now
$2E-4j=\sum_v(4-r_v-4\delta_v)$, where each summand is at most $0$ except for potential cubic vertices, where
it is $1$. Hence $V_3\ge2E-4j$, and there are at least $2E-4j-2e=2j-2k_0$ factors $V_3(u)$.

In the coordinates $(s,t)$, $V_3(u)=-m^2\widehat Q(\kappa)+tR$ has weighted order $\ge r$, and all other
factors are analytic at $p$ because $\kappa_p\ne0$ and $b\ne0$. So each term has weighted order at least
$2rk_0+r\max(0,2j-2k_0)\ge2rj$.
\end{proof}
For $r=1$ the lemma says that $\widehat Q^{\,2j-2k}$ divides the coefficient of $t^k$ in $\Psi_j$ for $k<j$. This
was verified with all parameters symbolic for $j=2$, and exactly for $j\le5$ at three random points, where
the power of $\widehat Q$ is exactly $2j-2k$ (\texttt{conic\_order.py}, \texttt{conic\_sym.py}).

\section{The first radial coefficient}\label{sec:first}
Section~\ref{sec:setting} provides one more equation: modularity forces $\ell_1=c\in\Q$. Its polynomial has
bidegree only $(3,3)$, and its edges are rank-one expressions of the same kind as those of the $\Psi_j$.

\paragraph{The polynomial.} Let $\lambda_1:=\ell_1+\frac{1+m}{24}$ be the coefficient of $\varepsilon$ in the
logarithm of the formal Gaussian integral of \cite[(17)]{Miz} without its scalar factor
$e^{-(1+m)\varepsilon/24}$, the sum of the constants $-\varepsilon/24$ of $\log(1/(q;q)_\infty)$ and
$-m\varepsilon/24$ of $\log(1/(q^m;q^m)_\infty)$. Modularity gives
\[
\lambda_1(P)=r:=c+\tfrac{1+m}{24}\in\Q .
\]
Put $D=\det\widetilde H$ and $R_1=D^3\lambda_1$. With $U=m(d+Y)$, $V=-mb$, $W=a+X$, $g=B+X/2$, $h=C+mY/2$,
$p=X(1+X)$, $q=Y(1+Y)$, $p_2=p(1+2X)$, $q_2=q(1+2Y)$, Wick's formula gives
\begin{align}
R_1={}&-\frac{X+mY}{12}D^3+\frac{pU+m^2qW}{4}D^2+\frac{g^2U+2ghV+h^2W}{2}D^2\notag\\
&-\frac{g(pU^2+m^2qVW)+h(pUV+m^2qW^2)}{2}D-\frac{p_2U^2+m^3q_2W^2}{8}D\notag\\
&+\frac5{24}\bigl(p^2U^3+m^4q^2W^3\bigr)+m^2pq\Bigl(\frac{UVW}4+\frac{V^3}6\Bigr).
\label{eq:R1}
\end{align}
In the square-completed implementation \texttt{formalells} the term $\frac12\gamma^{\mathsf T}\widetilde H^{-1}\gamma$,
$\gamma=(g,h)$, is absorbed into the constant and must be restored; \eqref{eq:R1} was checked against it with
all six parameters symbolic (\texttt{first\_coeff.py}). For $r\in\Q$ set
\[
f_r:=R_1-rD^3 .
\]
It has rational coefficients and vanishes at $P$ and at all its conjugates.

\begin{lemma}\label{lem:first}
\begin{enumerate}
\item[(a)] $R_1$ has bidegree $(3,3)$, so $\operatorname{supp}f_r\subseteq[0,3]^2$.
\item[(b)] For $b\ne0$, on the conic $R_1(X_\kappa,Y_\kappa)=-\frac5{24}m^4b^3\,\widehat Q(\kappa)^2\kappa^{-3}$.
Hence $\gcd(f_r,D)=1$, and $f_r$ has weighted order $\ge2$ for the weights $(1,2)$ of Lemma~\ref{lem:worder}
at every simple root of $\widehat Q$.
\item[(c)] For $b=0$, $R_1(-a,Y)=\frac5{24}a^2(a-1)^2m^3(d+Y)^3$.
\item[(d)] $f_r\equiv0$ only if $b=0$ and $a=d=1$; then $x=y=\frac12$ and $K=\Q$.
\item[(e)] The corners: $[X^3Y^3]f_r=m^3(\frac{m+1}{24}-r)$, which vanishes iff $c=0$;
$[X^3]f_r=d^3m^3(\frac1{24}+\frac{C^2}{2dm}-r)$; $[Y^3]f_r=a^3m^3(\frac m{24}+\frac{B^2}{2a}-r)$;
$[1]f_r=m^3\Delta^3\bigl(\frac{mdB^2-2mbBC+aC^2}{2m\Delta}-r\bigr)$.
\end{enumerate}
\end{lemma}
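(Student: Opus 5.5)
The plan is to read every part off the explicit Wick formula \eqref{eq:R1}, tracking the degrees of its building blocks, and then to restrict to the conic, to the line $X=-a$, and to the coordinate axes.

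\emph{Part (a).} The blocks have the following bidegrees in $(X,Y)$:
\begin{itemize}
\item $D=(a+X)m(d+Y)-m^2b^2$ has bidegree $(1,1)$;
\item $U$ is $(0,1)$, $W$ is $(1,0)$ and $V$ is constant;
\item $g$ is $(1,0)$ and $h$ is $(0,1)$;
\item $p$ and $p_2$ are $(2,0)$ and $(3,0)$, and $q,q_2$ are $(0,2),(0,3)$.
\end{itemize}
Checking term by term, each summand has $X$-degree at most $3$. For example, $(X+mY)D^3$ could reach $X$-degree $4$, but its $X^4$ part involves no $Y$ coming from the $X$ factor. Computing properly, $X\cdot D^3$ has $X$-degree $4$, so a cancellation must occur. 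I therefore expect the top $X^4$ contributions to cancel among $-\frac{X}{12}D^3$, $\frac{pU}{4}D^2$, $\frac{g^2U}{2}D^2$, $-\frac{gpU^2}{2}D$, $-\frac{p_2U^2}{8}D$ and $\frac5{24}p^2U^3$. Their $X^4$ coefficients are all proportional to $U^3$:
\[
-\tfrac1{12}+\tfrac14+\tfrac18-\tfrac14-\tfrac14+\tfrac5{24}=0.
\]
The $Y$ side follows by the same computation, or by the exchange symmetry of Lemma~\ref{lem:swap} applied to $\lambda_1$. Hence $\operatorname{supp}f_r\subseteq[0,3]^2$, because $rD^3$ also has bidegree $(3,3)$.

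\emph{Parts (b) and (c).} On the conic $D=0$ only the last line of \eqref{eq:R1} survives. Substituting $X_\kappa,Y_\kappa$ gives $W=-\kappa b$, $U=-m^2b/\kappa$ and $V=-mb$. Then
\[
p^2U^3+m^4q^2W^3=-\frac{m^4b^3}{\kappa^3}\bigl(m^2p^2+\kappa^6q^2\bigr),
\]
and the cross term $m^2pq(UVW/4+V^3/6)=m^2pq\,m^3b^3(-\tfrac14-\tfrac16)$ supplies exactly the missing $2m\kappa^3pq$ needed to make this $-\frac{5}{24}m^4b^3\kappa^{-3}(mp+\kappa^3 q)^2$. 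It then remains to verify $mp+\kappa^3q=m\widehat Q/\ldots$ by expanding $X_\kappa(1+X_\kappa)$ and $Y_\kappa(1+Y_\kappa)$. This matches the cubic vertex $V_3(u)$ of Lemma~\ref{lem:worder}, up to a normalising power of $m$.

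Since $D$ is irreducible for $b\neq0$ (a smooth conic) and $f_r=R_1$ is not identically zero on it ($\widehat Q\not\equiv0$, because its $\kappa^2$ coefficient involves $b\ne0$ and the constant term $ma(a-1)$), we get $\gcd(f_r,D)=1$. For the weighted order, $f_r=R_1-rD^3$ with $D=t$ of weight $2$, so $D^3$ has weight $6$. I would rerun the Wick-graph argument of Lemma~\ref{lem:worder} with $j=1$, giving weighted order $\ge 2rj=2$, or read it off directly: the $D^0$ part has order $2$ because of $\widehat Q^2$, and the $D^{\ge1}$ parts have weight $\ge2$.

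Part (c) is the same substitution with $b=0$, $X=-a$: then $W=0$ and $V=0$, and only $\frac5{24}p^2U^3$ survives, with $p=a(a-1)$ and $U=m(d+Y)$.

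\emph{Parts (e) and (d).} The corners follow by evaluating the leading and constant coefficients of each block:
\begin{itemize}
\item $[X^3Y^3]$: $D\sim mXY$, and the remaining terms contribute $-\frac{1}{12}$-type constants.
\item $X=Y=0$: $p=q=0$, so only the $D^3$ and $g^2,h^2$ terms survive, giving the stated quadratic form in $B,C$ over $2m\Delta$.
\end{itemize}
From $r=c+\frac{1+m}{24}$, the coefficient $[X^3Y^3]f_r$ vanishes iff $c=0$.

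For (d), $f_r\equiv0$ forces $f_r$ to vanish on the conic. For $b\ne0$ this contradicts (b), so $b=0$. Then (c) forces $a(a-1)=0$, hence $a=1$, and symmetrically $d=1$. The Nahm equations become $1-x=x$ and $1-y=y$.

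\emph{Main obstacle.} I expect the hardest step to be the bookkeeping in (a) and (b): the cancellation of the $X^4$ terms, and the identification of $mp+\kappa^3q$ with $\widehat Q$ times the correct power of $m$ and $\kappa$. I would verify both symbolically, as in \texttt{first\_coeff.py}.
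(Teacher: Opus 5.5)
Your route is the paper's: everything is read off from \eqref{eq:R1}. You cancel the top $X^4$ and $Y^4$ terms, restrict to the conic via $W=-\kappa b$, $U=-m^2b/\kappa$, $V=-mb$, restrict to $X=-a$ at $b=0$, and deduce (d) from (b), (c) and the exchange symmetry. Your $X^4$ cancellation, the completion of the square on the conic, (c) and the weighted-order argument are correct. One step, however, does not hold as you justify it: that $\widehat Q\not\equiv0$ ``because its $\kappa^2$ coefficient involves $b\ne0$ and the constant term $ma(a-1)$''. Each of these coefficients can vanish, and $\widehat Q\equiv0$ does occur with $b\ne0$. At $a=d=m=1$, $b=-1$ all four coefficients vanish, $D=X+Y+XY$ is irreducible, and $D$ divides $R_1$. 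What excludes this is $\Delta>0$. From $d(d-1)=a(a-1)=0$ and $a,d>0$ you get $a=d=1$. Then $mb(1+b)=mb(1+mb)=0$ gives $b=-1$ and $m=1$, hence $\Delta=0$. You need this both for $\gcd(f_r,D)=1$ in (b) and for the case $b\ne0$ of (d), which you reduce to (b).

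Two smaller points. First, the factor you left open is trivial. Expanding $p=(a+\kappa b)(a-1+\kappa b)$ and $q=(d+mb/\kappa)(d-1+mb/\kappa)$ gives $mp+\kappa^3q=\widehat Q(\kappa)$ exactly, so no normalising factor enters. Second, in (e) only the constant corner is actually computed, and your description of $[X^3Y^3]$ is misleading. After the cancellation in (a) this corner is a subleading coefficient. The term $-\frac{X+mY}{12}D^3$ contributes $-\frac{m^3}{4}(a+md)$, and this, together with all other $a,d,b,B,C$-dependent contributions, cancels across the seven groups of terms. What remains is $\bigl(\frac14-\frac14-\frac38+\frac5{12}\bigr)(m^3+m^4)=\frac{m^3(1+m)}{24}$. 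The corners $[X^3]$ and $[Y^3]$ need the same kind of expansion, for example at $Y=0$, where $D=md(X+A_{\rm b})$. There one finds $[X^3]R_1=\frac{(md)^3}{24}+\frac{(md)^2C^2}{2}$, as required.
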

\begin{proof}
(a), (b), (c), (e) are read off from \eqref{eq:R1} (\texttt{first\_coeff.py}, \texttt{first\_corners.py}); in (b)
the $t$-linear terms have weight $2$ and $rD^3=rt^3$ has weight $6$. (d) If $b\ne0$, (b) shows
$f_r\not\equiv0$ unless $\widehat Q\equiv0$, which with $a,d>0$ forces $a=d=1$ and then $b=-1$, $mb=-1$,
so $m=1$ and $\Delta=1-1=0$, contradicting $\Delta>0$. If $b=0$, (c) forces $a=1$, and the exchange symmetry (Lemma~\ref{lem:swap}) forces
$d=1$; the Nahm equations then give $x=y=\frac12$.
\end{proof}

\paragraph{Edges of $f_r$.} Let $\lambda^{(1)}_A(\beta;Z)$ be the first coefficient of the rank-one formal integral
without its factor $e^{-\varepsilon/24}$, and $N_1(A,\beta;Z):=(A+Z)^3\lambda^{(1)}_A(\beta;Z)$ the rank-one first
numerator. Explicitly
\begin{multline*}
24N_1=Z^3+\bigl(3A^2+(12\beta-3)A+12\beta^2-12\beta+2\bigr)Z^2\\
+\bigl(-2A^3+(12\beta+6)A^2+(24\beta^2-12\beta-3)A\bigr)Z+12A^2\beta^2,
\end{multline*}
one has, with all parameters symbolic (\texttt{fr\_edges.py}, \texttt{fr\_divides.py}),
\begin{align*}
[Y^0]f_r&=(md)^3\bigl[N_1(A_{\rm b},\beta_{\rm b};X)-\rho_{\rm b}(A_{\rm b}+X)^3\bigr], & \rho_{\rm b}&=r-\tfrac{C^2}{2md},\\
[Y^3]f_r&=m^3\bigl[N_1(a,B;X)-\rho_{\rm t}(a+X)^3\bigr], & \rho_{\rm t}&=r-\tfrac m{24},\\
[X^0]f_r&=(am)^3m\bigl[N_1(A_{\rm l},\beta_{\rm l};Y)-\rho_{\rm l}(A_{\rm l}+Y)^3\bigr], & \rho_{\rm l}&=\tfrac rm-\tfrac{B^2}{2am},\\
[X^3]f_r&=m^4\bigl[N_1(d,C/m;Y)-\rho_{\rm r}(d+Y)^3\bigr], & \rho_{\rm r}&=\tfrac{r-1/24}{m}.
\end{align*}
(The corner terms are the values of the rank-one $\lambda_1$ at $Z=0$ and $Z=\infty$: $N_1(A,\beta;0)=\frac12A^2\beta^2$
and $[Z^3]N_1=\frac1{24}$.)

\begin{lemma}[rank-one edge divisibility]\label{lem:edgediv}
Let $A\in\Q_{>0}$, $\beta\in\Q$, $\rho\in\Q$. The polynomial $N_1(A,\beta;Z)-\rho(A+Z)^3$ is nonzero, and it divides
$N_2(A,\beta;Z)$ only if $A=1$.
\end{lemma}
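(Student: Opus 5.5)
Nonvanishing comes first. The $Z^3$ coefficient of $N_1(A,\beta;Z)-\rho(A+Z)^3$ is $\frac1{24}-\rho$, and its constant term is $\frac12A^2\beta^2-\rho A^3$. If the polynomial were identically zero, we would have $\rho=\frac1{24}$. Comparing $Z^2$ coefficients would then give $\frac1{24}\bigl(3A^2+(12\beta-3)A+12\beta^2-12\beta+2\bigr)=\frac{3A}{24}$, i.e.\ $3A^2+(12\beta-6)A+12\beta^2-12\beta+2=0$. Combined with the $Z$-coefficient equation $\frac1{24}\bigl(-2A^3+(12\beta+6)A^2+(24\beta^2-12\beta-3)A\bigr)=\frac{3A^2}{24}$ and the constant equation $12A^2\beta^2=A^3$ (so $A=12\beta^2$), this is an overdetermined system in $(A,\beta)$. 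I would eliminate $A$ and check by a short resultant that it has no solution with $A>0$ rational. It may in fact be inconsistent over $\C$; substituting $A=12\beta^2$ into the $Z^2$ equation already gives a quartic in $\beta$ to test.

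For divisibility, write $G_\rho=N_1-\rho(A+Z)^3$, a polynomial of degree at most $3$. Then $N_2(A,\beta;Z)=X(1+X)M_2$ has degree at most $5$. I would treat $\rho$ as an unknown and use that the condition $G_\rho\mid N_2$ is polynomial in $(A,\beta,\rho)$. Concretely, I would compute the remainder of $N_2$ modulo $G_\rho$ in $\Q(A,\beta,\rho)[Z]$ (or work on each degree stratum of $G_\rho$) and require all of its coefficients to vanish. Eliminating $\rho$ with resultants or a Gr\"obner basis should leave an ideal in $(A,\beta)$ whose real positive part lies on $A=1$.

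A cleaner route may avoid this. Evaluate $G_\rho$ at $Z=0$ and at $Z=-1$. Since $N_2$ vanishes at $0$ and $-1$, these points impose no condition, so the useful information lies elsewhere. Instead, compare top and bottom coefficients. By Proposition~\ref{prop:rk1corners}, $N_2$ has $[Z^5]=-\frac16B_3(\beta+\frac A2)$ and $[Z^1]=-\frac{A^6}{6}B_3\bigl((\beta+\frac12)/A\bigr)$. The quotient $N_2/G_\rho$ has degree $2$ (or more after degree drops) and is divisible by $Z(1+Z)$ unless $G_\rho$ shares the roots $0$ or $-1$. Sharing the root $0$ means $\rho A^3=\frac12A^2\beta^2$; sharing $-1$ pins down $\rho$ via $N_1(A,\beta;-1)=\rho(A-1)^3$. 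This splits into cases by how many of $0,-1$ are roots of $G_\rho$. In the generic case $G_\rho\mid M_2$, which forces $G_\rho$ to have degree at most $3$ dividing a cubic; then $M_2=\mu G_\rho$ with $\mu$ constant, and matching four coefficients gives explicit equations. In the other cases the cofactor is linear or quadratic with a known root. In each case I would solve the finite system exactly and verify that the only solutions with $A>0$ have $A=1$. I would use the factor $(A-1)^3$ in $M_2(-1)$ from the proof of Theorem~\ref{thm:rk1} as the hint for where solutions sit.

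I expect the main obstacle to be bookkeeping of degree drops: $\rho=\frac1{24}$ lowers $\deg G_\rho$, and vanishing of $B_3(\beta+\frac A2)$ lowers $\deg N_2$. Each stratum must be handled separately, and solutions with irrational or negative $A$ must be discarded by exact factorisation. I would organise it as an exact elimination (\texttt{fr\_divides.py}), with a Gr\"obner basis in $\Q[A,\beta,\rho,\mu_0,\mu_1,\mu_2]$ for the identity $N_2=(\mu_0+\mu_1Z+\mu_2Z^2)G_\rho$, followed by saturation by $A$ and factorisation of the elimination ideal in $A$.
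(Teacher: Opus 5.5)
Your treatment of divisibility follows the paper's proof. The paper also splits on $\rho=\frac1{24}$, where $\deg H=2$ because the $Z^2$-coefficient $\frac1{24}\bigl(3(A+2\beta-1)^2-1\bigr)$ cannot vanish for rational data, and on whether $H(0)$ or $H(-1)$ vanishes. It then reduces the generic stratum to $H=cM_2$, the others to $H/(1+Z)\mid M_2$ or $H/Z\mid(1+Z)M_2$, and settles each by exact elimination. Two bookkeeping points:
\begin{itemize}
\item Your single ansatz $N_2=(\mu_0+\mu_1Z+\mu_2Z^2)G_\rho$ misses the stratum $\rho=\frac1{24}$, where the cofactor can be cubic. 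Allow a $\mu_3$ term or treat that stratum separately.
\item Eliminations that are not saturated by the stratum's inequations do return positive rational $A\ne1$. The paper meets $A=\frac23$ (with $\beta=\frac23$) and $A\in\{3,\frac43,\frac13,\frac34,12\}$ (on $A=12\beta^2$). These are discarded only because they lie on $\rho=\frac1{24}$, outside the stratum being treated. So ``the only solutions with $A>0$ have $A=1$'' holds only after this cross-check.
\end{itemize}

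For nonvanishing the paper uses a one-line evaluation. Since $(A+Z)^3$ vanishes at $Z=-A$, one has $H(-A)=N_1(A,\beta;-A)=\frac5{24}A^2(A-1)^2$, which is nonzero for $A\ne1$ independently of $\beta$ and $\rho$. At $A=1$, $H=(1+Z)^2\bigl[(\frac1{24}-\rho)Z+\frac{\beta^2}2-\rho\bigr]$. Your coefficient comparison also works, but not for the reason you suggest: the system is consistent over $\C$. Put $A=12\beta^2$. The $Z^2$-equation becomes $2(12\beta^2-1)(18\beta^2+6\beta-1)=0$, and the $Z$-equation divided by $A$ becomes $-3(12\beta^2-1)(8\beta^2-4\beta-1)=0$. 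Their common solutions are exactly $12\beta^2=1$, that is $A=1$, $\beta=\pm\frac1{2\sqrt3}$, $\rho=\frac1{24}$. So the check ``no solution with $A>0$ rational'' would fail, since $A=1$ is such a solution. What excludes it is $\beta\notin\Q$. In general the eliminations must be read over $\Q$ in both $A$ and $\beta$, not through positivity or reality of $A$.
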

\begin{proof}
Write $H=N_1-\rho(A+Z)^3$ and $N_2=Z(1+Z)M_2$ with $M_2$ cubic. $H$ has degree $3$ unless $\rho=\frac1{24}$, and
$H(0)=0$ iff $\rho=\beta^2/(2A)$; if both hold then $A=12\beta^2$. For $A\ne1$ we treat four cases, each an
elimination in $(A,\beta)$ (\texttt{fr\_divides2.py}, \texttt{fr\_divides3.py}, \texttt{fr\_divides4.py}).
(i) $\deg H=3$, $H(0)\ne0$: then $H\mid(1+Z)M_2$, so either $H=cM_2$, or $H(-1)=0$ and $H/(1+Z)$ divides $M_2$.
In the first case the $4\times3$ matrix of the coefficients of $(A+Z)^3$, $M_2$, $N_1$ has rank $\le2$; its
$3\times3$ minors have the common factor $A^2(A-1)^2$, and the resultant in $\beta$ of two of them, divided by it,
has rational roots only at $A\in\{0,1\}$. In the second case
$\rho=N_1(A,\beta;-1)/(A-1)^3$, and the remainder of $M_2$ modulo $H/(1+Z)$ gives two equations whose resultant
in $\beta$ has positive rational roots only at $A=1$ and $A=\frac23$; at $A=\frac23$ the unique solution
$\beta=\frac23$ has $\rho=\frac1{24}$, outside this case. (ii) $\rho=\frac1{24}$: $H$ is a quadratic, whose leading
coefficient vanishes only for $A=1-2\beta\pm\frac{\sqrt3}3$, and the two coefficients of the remainder of
$Z(1+Z)M_2$ modulo $H$ have a resultant with rational roots only at $A\in\{0,1\}$. (iii) $\rho=\beta^2/(2A)$,
$\rho\ne\frac1{24}$: $H=Zq$ with $q$ quadratic, and the same elimination for $q\mid(1+Z)M_2$ gives positive
rational roots $A\in\{1,3,\frac43,\frac13,\frac34,12\}$, of which all but $A=1$ lie on $A=12\beta^2$, where the
leading coefficient of $q$ vanishes and the resulting linear polynomial does not divide $N_2$. (iv) $A=12\beta^2$
is contained in case (ii). Finally $H\not\equiv0$: for $A\ne1$, $H(-A)=N_1(A,\beta;-A)=\frac5{24}A^2(A-1)^2\ne0$, and for $A=1$,
$H=(1+Z)^2\bigl[(\frac1{24}-\rho)Z+\frac{\beta^2}2-\rho\bigr]$, which vanishes identically only if $12\beta^2=1$.
\end{proof}

\paragraph{Corners of $f_r$.} A corner of $f_r$ can vanish (Lemma~\ref{lem:first}(e)). At a vanishing corner the
two adjacent coefficients are Bernoulli values: the constant term of an edge vanishes when $\rho=\beta^2/(2A)$,
its cubic term when $\rho=\frac1{24}$, and then
\begin{equation}\label{eq:B2}
[Z]\Bigl(N_1-\tfrac{\beta^2}{2A}(A+Z)^3\Bigr)=-\tfrac{A^3}{2}B_2\Bigl(\tfrac{\beta+1/2}{A}\Bigr),\qquad
[Z^2]\Bigl(N_1-\tfrac1{24}(A+Z)^3\Bigr)=\tfrac12B_2\Bigl(\beta+\tfrac A2\Bigr).
\end{equation}
These are the case $j=1$ of Proposition~\ref{prop:corners}, with the same arguments $x_1,\dots,x_8$ and, by the
edge formulas above, with the prefactors of Proposition~\ref{prop:edges} at $j=1$. Since $B_2$ has no rational
root, both coefficients adjacent to a vanishing corner are nonzero. (Checked symbolically, and on the actual
coefficients of $f_r$ at all four corners against $\Psi_2,\Psi_3,\Psi_4$; \texttt{check32.py}.)

\begin{lemma}[width]\label{lem:width}
Let $G\in\C[X,Y]$ be a common divisor of $f_r$ and $\Psi_2$, where $f_r,\Psi_2\not\equiv0$. If $b\ne0$, or if
$b=0$ and $a,d\ne1$, then $\deg_XG\le2$ and $\deg_YG\le2$.
\end{lemma}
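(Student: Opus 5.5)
The plan is to argue by contradiction, separately in each variable. By the exchange symmetry (Lemma~\ref{lem:swap}), which maps $f_r$ and $\Psi_2$ to their counterparts with $X,Y$ swapped, it suffices to treat $\deg_XG$. Suppose $\deg_XG=3$; this is the maximum, since $\operatorname{supp}f_r\subseteq[0,3]^2$ by Lemma~\ref{lem:first}(a). Then $f_r=G\cdot h(Y)$ with $h\in\C[Y]$ and $h\not\equiv0$. The idea is to restrict this factorisation to the two horizontal edges, $Y=0$ and the top row in $Y$. There, by the edge formulas for $f_r$ and Proposition~\ref{prop:edges}, both $f_r$ and $\Psi_2$ reduce to rank-one expressions, and Lemma~\ref{lem:edgediv} forces the effective exponent to be $1$.

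\emph{Bottom edge.} The edge polynomial $[Y^0]f_r=(md)^3\bigl[N_1(A_{\rm b},\beta_{\rm b};X)-\rho_{\rm b}(A_{\rm b}+X)^3\bigr]$ is nonzero by Lemma~\ref{lem:edgediv}. Hence $h(0)\ne0$, and $G(X,0)$ is a nonzero constant multiple of $[Y^0]f_r$. Since $G\mid\Psi_2$, we get $G(X,0)\mid\Psi_2(X,0)=(md)^6N_2(A_{\rm b},\beta_{\rm b};X)$.

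Suppose $A_{\rm b}\ne1$. Then $N_2(A_{\rm b},\beta_{\rm b};X)\not\equiv0$, by the multiplicity statement of Theorem~\ref{thm:rk1}: $(1+X)^2$ divides all $N_j$ only when $A=1$, so $N_2$ cannot vanish identically. Therefore the rank-one polynomial $N_1-\rho_{\rm b}(A_{\rm b}+X)^3$ divides $N_2(A_{\rm b},\beta_{\rm b};X)$, which contradicts Lemma~\ref{lem:edgediv}. Hence $A_{\rm b}=\Delta/d=1$.

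\emph{Top edge.} Let $e=\deg_Yh$. Comparing the coefficients of $Y^{3}$ in $f_r=Gh$ shows that $[Y^{3-e}]G$ is a constant multiple of $[Y^3]f_r=m^3\bigl[N_1(a,B;X)-\rho_{\rm t}(a+X)^3\bigr]$, which is again nonzero. If $a\ne1$, then $[Y^6]\Psi_2=m^6N_2(a,B;X)\not\equiv0$, so $\deg_Y\Psi_2=6$. Writing $\Psi_2=GK$, the top $Y$-coefficients multiply, so $[Y^{3-e}]G$ divides $m^6N_2(a,B;X)$. Lemma~\ref{lem:edgediv} again gives a contradiction, so $a=1$.

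\emph{Conclusion.} With $a=1$ and $\Delta=d$ we get $ad-mb^2=d$, so $mb^2=0$, i.e.\ $b=0$; this is excluded if $b\ne0$. If $b=0$, then $A_{\rm b}=\Delta/d=a$, and $A_{\rm b}=1$ contradicts $a\ne1$. The bound $\deg_YG\le2$ follows in the same way from the vertical edges $[X^0]$ and $[X^3]$. Alternatively, applying the argument to the exchanged parameters $p'$ yields $A_{\rm l}=\Delta/a=1$ and $d=1$, which again forces $b=0$ and then contradicts $d\ne1$.

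The step needing most care is the passage from $G\mid\Psi_2$ to divisibility of the edge polynomials. For the bottom edge this requires $\Psi_2(X,0)\not\equiv0$. For the top edge it requires $\deg_Y\Psi_2$ to be exactly $6$, so that top coefficients multiply. Both are secured by the non-vanishing of $N_2$ for $A\ne1$. It also needs $h(0)\ne0$, which holds because the edge polynomials of $f_r$ are nonzero. Degree drops of the $f_r$-edges themselves (the case $\rho=\frac1{24}$) cause no trouble, because only divisibility of the whole edge polynomial is used.
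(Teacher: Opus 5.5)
Your argument is the paper's proof. You restrict $f_r=G\,h(Y)$ to the bottom and top edges and use that the edge polynomials of $f_r$ are nonzero, so that $h(0)\ne0$ and the extreme $Y$-coefficients of $G$ are constant multiples of those edges. You then pass the divisibility to the edges of $\Psi_2$ from Proposition~\ref{prop:edges} and apply Lemma~\ref{lem:edgediv} to get $A_{\rm b}=a=1$. Your computation $mb^2=0$ is the same fact as the paper's appeal to Lemma~\ref{lem:ineq}. For $\deg_YG$, use the left and right edge formulas of $f_r$ directly, as the paper does, since Lemma~\ref{lem:swap} is only stated for the $\Psi_j$.

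The one step that is not justified as written is $N_2(A,\beta;\cdot)\not\equiv0$ for $A\ne1$. The multiplicity statement of Theorem~\ref{thm:rk1} is about $(1+X)^2$ dividing $N_2,N_3,N_4$ \emph{simultaneously}. If only $N_2\equiv0$, you get $(1+X)^2\mid N_2$, and that also happens for $A\ne1$ on the lines $\beta\in\{1,A,\frac{A+1}2\}$, where $M_2(-1)=0$. So no contradiction follows.

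The correct source, which the paper cites at exactly this point, is the argument in the proof of Lemma~\ref{lem:euler}. There, $N_2\equiv0$ forces both Bernoulli corners of $N_2$ (values of $B_3$) to vanish, which leaves nine pairs $(A,\beta)$. The six with $A\ne1$ are excluded by the explicit values listed there. With that citation, both your bottom-edge and top-edge steps go through unchanged.
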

\begin{proof}
By Lemma~\ref{lem:edgediv} the four edge polynomials of $f_r$ are nonzero, so $f_r$ has degree $3$ in each
variable and is divisible by neither $X$ nor $Y$. Suppose $\deg_XG=3$. Then $f_r=GQ$ with $Q\in\C[Y]$, and
$Q(0)\ne0$ since $Y\nmid f_r$. So $G(X,0)$ and the leading coefficient of $G$ in $Y$ are nonzero constant
multiples of the bottom and top edges of $f_r$, that is, of $N_1(A_{\rm b},\beta_{\rm b};X)-\rho_{\rm b}(A_{\rm b}+X)^3$ and
$N_1(a,B;X)-\rho_{\rm t}(a+X)^3$. As $G\mid\Psi_2$, they divide $\Psi_2(X,0)$ and the leading coefficient of $\Psi_2$ in $Y$.
If the bottom edge of $\Psi_2$ is not identically zero, it is a nonzero multiple of $N_2(A_{\rm b},\beta_{\rm b};X)$
(Proposition~\ref{prop:edges}), and Lemma~\ref{lem:edgediv} gives $A_{\rm b}=1$; if it vanishes identically, then
$N_2(A_{\rm b},\beta_{\rm b};\cdot)\equiv0$, which by the proof of Lemma~\ref{lem:euler} again forces $A_{\rm b}=1$. In the same
way the top edge gives $a=1$. For $b\ne0$ this contradicts $A_{\rm b}=\Delta/d<a$ (Lemma~\ref{lem:ineq}); for $b=0$,
$A_{\rm b}=a\ne1$. The case $\deg_YG=3$ is the same with the left and right edges, where $A_{\rm l}=\Delta/a<d$.
\end{proof}
In particular $f_r$ itself never divides $\Psi_2$ in these cases.

\paragraph{Known examples.} For all $38$ pairs of Table~\ref{tab:known} we identified the Nahm point in an
explicit number field $K$ of degree at most $3$ and verified the Nahm equations there exactly, after clearing
the denominators of the exponents. In exact arithmetic in $K$, $r=\lambda_1(P)$ is rational and
$\gcd(f_r,\Psi_2)=1$ (\texttt{known\_exact.py}), so for all of them the count of Section~\ref{sec:count} stops at $j=2$. As an independent check of the normalization, the three Kanade--Russell sums
(Example~7) give $c=-\frac1{18},\frac5{18},\frac{11}{18}$, which are the exponents of the mod~$9$ products
$\prod(1-q^n)^{-1}$ over $n\equiv\pm1,\pm3$; $\pm2,\pm3$; $\pm3,\pm4\pmod9$: the residue pairs $\pm1,\pm2,\pm4,\pm3$
contribute $-\frac{11}{36},\frac1{36},\frac{13}{36},\frac14$, that is, $-\frac92B_2(\frac a9)$ for the pair $\pm a$.

\section{Diagonal rigidity}\label{sec:rigidity}
\begin{lemma}[Bernoulli rigidity]\label{lem:rig}
Let $u,w\in\Q$ and $\gamma,s\in\C^*$ satisfy $B_n(u)=\gamma s^{n-1}B_n(w)$ for $n=2,3,4,5$ (a relation in which
both sides vanish is allowed). Then $s^2\in\{1,\frac74,\frac47\}$. In particular these relations are impossible
for $s\in\{\pm m,\pm\frac1m\}$ with $m\in\Q_{>0}$, $m\ne1$, since $\frac74$ and $\frac47$ are not squares in $\Q$.
\end{lemma}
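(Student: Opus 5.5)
We would first eliminate $\gamma$ and then split according to whether the odd-index values vanish. To keep parities visible, write $u=\frac12+U$, $w=\frac12+W$ and use the centred forms
\[
B_2=U^2-\tfrac1{12},\quad B_3=U^3-\tfrac U4,\quad B_4=U^4-\tfrac{U^2}2+\tfrac7{240},\quad B_5=U^5-\tfrac56U^3+\tfrac7{48}U .
\]
Here $B_2,B_4$ are even in $U$ and $B_3,B_5$ are odd. Neither $B_2$ nor $B_4$ has a rational root, so for rational $u,w$ the values $B_2(u),B_2(w),B_4(u),B_4(w)$ are all nonzero. The $n=2$ relation then gives $\gamma=B_2(u)/(sB_2(w))$. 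Substituting into $n=4$ yields the even relation
\[
s^2=\frac{r(U)}{r(W)},\qquad r(U):=\frac{B_4}{B_2}(\tfrac12+U),
\]
which depends only on $U^2$ and $W^2$.

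\emph{Degenerate case.} The rational roots of $B_3$ are $U\in\{0,\pm\frac12\}$. One checks that $B_5$ also vanishes there ($B_5(\tfrac12\pm\tfrac12)=B_5(\tfrac12)=0$), and has no other rational roots since $U^4-\frac56U^2+\frac7{48}$ has discriminant $\frac{25}{36}-\frac7{12}=\frac1{9}$ and roots $U^2\in\{\frac14,\frac7{12}\}$. Because $\gamma,s\ne0$, the relations for $n=3,5$ show that $B_3(u)=0$ if and only if $B_3(w)=0$. So either both $U,W\in\{0,\pm\frac12\}$, or all four odd values are nonzero. In the first case only $U^2,W^2\in\{0,\frac14\}$ occur. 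Evaluating gives $r=(\frac7{240})/(-\frac1{12})=-\frac7{20}$ at $U=0$ and $r=(-\frac1{30})/(\frac16)=-\frac15$ at $U^2=\frac14$. Hence $s^2\in\{1,\frac74,\frac47\}$, which is exactly the exceptional set in the statement.

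\emph{Generic case.} All of $B_n(u),B_n(w)$ are now nonzero for $n=2,\dots,5$. From $n=2,3$ we get the rational value
\[
s=\frac{B_3(u)B_2(w)}{B_2(u)B_3(w)},
\]
and from $n=2,5$ we get $s^3=B_5(u)B_2(w)/(B_2(u)B_5(w))$. Eliminating $s$ leaves two polynomial equations in $(U,W)$:
\begin{gather*}
E_1:\ B_4(u)B_2(w)\,\bigl(B_2(u)B_3(w)\bigr)^2=B_2(u)B_4(w)\,\bigl(B_3(u)B_2(w)\bigr)^2,\\
E_2:\ B_5(u)B_3(w)\,B_2(w)^2=B_3(u)B_5(w)\,B_2(u)^2 .
\end{gather*}
$E_1$ is obtained by equating the even relation with the square of the $n=2,3$ formula for $s$. $E_2$ is obtained by dividing the $s^3$ relation by the $s$ relation and equating the quotient $s^2$ with the square of the $n=2,3$ formula.

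Both $E_1$ and $E_2$ are invariant under $(U,W)\mapsto(\pm U,\pm W)$ up to sign. Both contain the lines $W=\pm U$, which give $s=\pm1$. The plan is to factor $E_1$ and $E_2$ exactly, strip off $W^2-U^2$ together with the spurious factors $B_2$, $B_3$, $U$, $W$, which are excluded in this case, and show that the residual curves have no common component. We then compute the resultant of the residuals in $W$, or more economically in $W^2$ after substituting $U^2=\mu$ and $W^2=\nu$. Finally we find its rational roots $\mu$ with $\mu$ a rational square, and check for each that the corresponding common $\nu$ either is not a rational square or lands back in $\nu=\mu$ or in the degenerate set.

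The main obstacle is this generic elimination: ruling out a common component of the two residual curves, and certifying that their finitely many intersection points have no rational coordinates other than those already accounted for. Both steps are finite exact computations, a factorisation followed by a resultant with a rational-root search, in the same style as Theorem~\ref{thm:rk1}. The concluding sentence of the lemma is then immediate. For $s=\pm m^{\pm1}$ with rational $m\ne1$, we have $s^2=m^{\pm2}\ne1$. Moreover $s^2$ is the square of a rational number, whereas $\frac74$ and $\frac47$ are not, since $7$ occurs in each to the first power.
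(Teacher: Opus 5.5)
Your setup and your degenerate case are correct and match the paper's. Since $B_2,B_4$ have no rational roots, $B_3(u)=0\iff B_3(w)=0$, and the values $B_4/B_2=-\frac7{20}$ at $u=\frac12$ and $-\frac15$ at $u\in\{0,1\}$ give $s^2\in\{1,\frac74,\frac47\}$. The gap is the generic case, which is the real content of the lemma. You reduce it to two equations and then only announce that a factorisation and a resultant with a rational-root search will show that nothing survives off $W=\pm U$. Nothing in the proposal establishes that the residual curves have no common component, or that their intersections lie in the excluded set, so as written the lemma is not proved.

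Moreover, your displayed $E_2$ is not the equation your words describe. Dividing the $s^3$ relation by the $s$ relation and equating with the square of the $n=2,3$ formula gives
\[
B_5(u)B_3(w)^3B_2(u)^2=B_5(w)B_3(u)^3B_2(w)^2 .
\]
The displayed identity $B_5(u)B_3(w)B_2(w)^2=B_3(u)B_5(w)B_2(u)^2$ instead says $s^2=(\gamma s)^2$, i.e.\ $\gamma^2=1$. That does not follow from the hypotheses, so eliminating with it would prove nothing about the true system.

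The missing step is short if you use the factorisation you already found, $B_5=B_3\cdot(U^2-\frac7{12})$ in your centred variable. The relations $n=3,5$ then give directly $s^2=(12\mu-7)/(12\nu-7)$ with $\mu=U^2$, $\nu=W^2$. The paper proceeds this way in the variable $U'=(2u-1)^2=4\mu$ (and $V'$ likewise). Substituting $s^2=(3U'-7)/(3V'-7)$ into the $n=2,4$ relation and into the square of your $n=2,3$ formula gives $(U'-V')g=0$ and $(U'-V')h=0$. Here $g=15U'V'-7U'-7V'+7$ and $h$ is an explicit polynomial of bidegree $(2,2)$. If $U'\ne V'$, then $g=0$ (which is incompatible with $15U'=7$) forces $V'=7(U'-1)/(15U'-7)$, and then
\[
h=\frac{112\,U'(U'-1)(3U'-7)(3U'-1)}{(15U'-7)^2}.
\]
This is nonzero because $U'\notin\{0,1,\frac13,\frac73\}$ in the generic case. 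Hence $U'=V'$, i.e.\ $W=\pm U$, and $s^2=1$. This argument shows there are no solutions off the diagonal at all, not merely no rational ones, and it needs no resultant or rational-root search.
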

\begin{proof}
$B_2$ and $B_4$ have no rational roots, and the rational roots of $B_3=u(u-\frac12)(u-1)$ and
$B_5=B_3\cdot(u^2-u-\frac13)$ are $0,\frac12,1$ \cite{Ink}. If $B_3(u)=0$ then $B_3(w)=0$, so $u,w\in\{0,\frac12,1\}$, and
the relations for $n=2,4$ give $s^2=\frac{B_4(u)/B_2(u)}{B_4(w)/B_2(w)}$ with $B_4/B_2=-\frac15$ at $0,1$ and $-\frac7{20}$
at $\frac12$; so $s^2\in\{1,\frac74,\frac47\}$.

Otherwise put $x=2u-1$, $y=2w-1$, $U=x^2$, $V=y^2$. Then
\[
B_2(u)=\tfrac{3U-1}{12},\quad B_3(u)=\tfrac{x(U-1)}8,\quad B_4(u)=\tfrac{15U^2-30U+7}{240},\quad B_5(u)=\tfrac{x(U-1)(3U-7)}{96},
\]
and similarly for $w$. Since $B_3(u)\ne0$ and $x$ is rational, $U,V\notin\{0,1,\frac13,\frac73\}$, so all eight values
are nonzero. Taking ratios of the relations,
\[
s^2=\frac{3U-7}{3V-7},\qquad \frac{B_4(u)}{B_2(u)}=s^2\frac{B_4(w)}{B_2(w)},\qquad
\Bigl(\frac{B_3(u)B_2(w)}{B_2(u)B_3(w)}\Bigr)^2=s^2 .
\]
Substituting the first into the other two and clearing denominators gives $(U-V)g=0$ and $(U-V)h=0$ with
\[
g=15UV-7U-7V+7,\qquad h=27U^2V^2-18U^2V+7U^2-18UV^2+16UV-14U+7V^2-14V+7.
\]
If $U\ne V$ then $g=0$; as $15U-7=0$ is incompatible with $g=0$, $V=7(U-1)/(15U-7)$, and then
$h=112U(U-1)(3U-7)(3U-1)/(15U-7)^2\ne0$. Hence $U=V$, and $s^2=1$.
\end{proof}

\begin{theorem}[no diagonal edges]\label{thm:nodiag}
Let $m\ne1$ and let $\Gamma$ be an irreducible common factor of $f_r,\Psi_2,\Psi_3,\Psi_4$, with $f_r\not\equiv0$. Then
$\Newt\Gamma$ has no edge in a diagonal direction.
\end{theorem}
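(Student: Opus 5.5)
The idea is to use that $\Gamma$ divides each of $\Psi_2,\Psi_3,\Psi_4$ and $f_r$: the Newton polygon of a factor is a Minkowski summand of the Newton polygon of each multiple, and the edge polynomial of $\Gamma$ in a given direction divides the edge polynomial of each multiple in that direction. So assume $\Newt\Gamma$ has an edge in a diagonal direction, say along $(1,-1)$ (a northwest--southeast edge) or $(1,1)$. Then $\Newt\Psi_j$ must have a nontrivial edge in the same direction for each $j=2,3,4$, and also $\Newt f_r$ must. By Proposition~\ref{prop:newt}, $\operatorname{supp}\Psi_j\subseteq P_{3j}$, and the only diagonal edges available are the four unit corner-cut edges of the octagon. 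Such an edge is present exactly when both of its endpoint coefficients are nonzero; a diagonal edge of a smaller Newton polygon could arise only if a corner coefficient vanishes, and then we pass to the next lattice points. First I would show, using Proposition~\ref{prop:corners} and equation~\eqref{eq:B2} for $f_r$, that in every case the diagonal edge of $\Psi_j$ in that direction is a segment of lattice length one. This holds for the octagon corner cut. It also holds if a corner vanishes, since then the adjacent coefficients are nonzero Bernoulli values, and for $j=3$ there is no vanishing. Consequently the edge of $\Gamma$ in that direction has lattice length exactly one. Its edge polynomial is then a binomial $\alpha X^{i}Y^{k}+\beta' X^{i+1}Y^{k\mp1}$, and its ratio $\beta'/\alpha$, call it $-\sigma$, is fixed independent of $j$.

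The key step is that divisibility of edge binomials forces the ratio of the two endpoint coefficients of the corresponding diagonal edge of $\Psi_j$ to be the same $\sigma$ for $j=2,3,4$ (and $f_r$, $j=1$). Now I would read off these endpoint coefficients. At the corner of $P_{3j}$ near, say, $(3j,0)$, the two endpoints are $[X^{3j-1}Y^0]$ (from the bottom edge, Proposition~\ref{prop:edges}) and $[X^{3j}Y^1]$ (from the right edge). Both are Bernoulli values by Proposition~\ref{prop:rk1corners}: one is $(md)^{3j}\frac{(-1)^{j+1}}{(j+1)!}B_{j+1}(x_1)$, and the other is $m^{4j}\cdot(-\frac{d^{3j}}{(j+1)!})B_{j+1}(x_8)$. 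Their ratio is therefore of the form $\gamma s^{j}B_{j+1}(u)/B_{j+1}(w)$, with $s$ a fixed monomial in $m$ up to sign (here $s=\pm m$ or $\pm1/m$ after absorbing $d$-powers, which cancel). So the requirement that the ratio is constant in $j$ becomes $B_n(u)=\gamma s^{n-1}B_n(w)$ for $n=2,3,4,5$, where $n=2$ comes from $f_r$ via~\eqref{eq:B2}. Lemma~\ref{lem:rig} then gives $s^2\in\{1,\frac74,\frac47\}$, which is impossible for $s=\pm m^{\pm1}$ with $m\ne1$ rational. The other three corners are handled the same way; the near/far roles and the exchange symmetry of Lemma~\ref{lem:swap} permute $x_1,\dots,x_8$.

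The main obstacle is the bookkeeping of the powers of $m$ at each of the four corners. I need to check that the $A^{3j}$ factors, together with the prefactors $(md)^{3j}$, $(am)^{3j}m^j$, $m^{3j}$ and $m^{4j}$, combine to a geometric factor $\gamma s^{j}$ with $s\in\{\pm m,\pm1/m\}$ rather than $s=\pm1$. For instance, at the corner above, $(md)^{3j}$ against $m^{4j}d^{3j}$ leaves $m^{-j}$. I also need to handle the degenerate cases where a corner vanishes, because the relevant diagonal edge then joins different lattice points. I would treat those via the "both sides vanish is allowed" clause of Lemma~\ref{lem:rig}, or by noting that vanishing for some $j$ but not others already breaks the common edge direction. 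Finally, I need to confirm that $f_r$ contributes the $n=2$ relation with the same $s$, using the $j=1$ prefactors stated after~\eqref{eq:B2}.
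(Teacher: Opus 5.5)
Your argument is essentially the paper's proof. It uses Ostrowski to make $\inn_\nu\Gamma$ a unit binomial, and equality of the corner ratios of each $\Psi_j$ ($\Psi_1:=f_r$) with its ratio. It identifies those corners with the Bernoulli values of Proposition~\ref{prop:corners} and \eqref{eq:B2}, whose prefactors give $s=\pm m^{\pm1}$; your lower-right bookkeeping reproduces the paper's $(x_1,x_8,-m)$. Lemma~\ref{lem:rig}, with $n=2$ supplied by $f_r$, then gives the contradiction.

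One step should be corrected. \eqref{eq:B2} concerns only $f_r$. For $\Psi_2,\Psi_4$ nothing is known about the coefficients next to a vanishing octagon corner. So your claim that the diagonal face of $\Psi_j$ always has lattice length one is unjustified, and can fail when both endpoints of a corner cut vanish.

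You do not need that claim. Unit length of $\inn_\nu\Gamma$ already follows from $\Newt\Psi_3=P_9$. Alternatively, as in the paper, it follows from $f_r$: a diagonal face of $\Newt f_r\subseteq[0,3]^2$ exists only at a vanishing corner, and it is then the unit segment joining the two neighbours, which are nonzero by \eqref{eq:B2}.

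For $j=2,4$ the right dichotomy is the following, not ``vanishing for some $j$ but not others''. If exactly one endpoint of the corner cut vanishes, the face of $\Psi_j$ in direction $\nu$ is a single vertex, which already excludes the edge. If both vanish, the relation for $n=j+1$ reads $0=0$, which Lemma~\ref{lem:rig} allows. This second case also covers $\Psi_2\equiv0$ or $\Psi_4\equiv0$ (possible in the Euler case). There your assertion that every $\Newt\Psi_j$ has an edge in direction $\nu$ fails.
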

\begin{proof}
By Ostrowski \cite{Ost}, $\Newt\Gamma$ is a Minkowski summand of $\Newt f_r\subseteq[0,3]^2$, so it can have an edge
with a diagonal inner normal $\nu$ only if $\Newt f_r$ has one, that is, only if the corresponding corner of $f_r$
vanishes. The face of $f_r$ in direction $\nu$ is then the segment joining the two adjacent coefficients, both
nonzero by \eqref{eq:B2}. Write $\Psi_1:=f_r$. For $j=1,\dots,4$, $\inn_\nu\Gamma=\alpha X^\cdot Y^\cdot+\beta X^\cdot Y^\cdot$
divides $\inn_\nu\Psi_j$. If both corners of that face of $\Psi_j$ are nonzero, the corner ratio equals
$-\alpha/\beta$; by \eqref{eq:B2} for $j=1$ and Proposition~\ref{prop:corners} for $j=2,3,4$ this is the relation
of Lemma~\ref{lem:rig} with $n=j+1$ and $(u,w,s)$ equal to $(\eta,\xi,\frac1m)$ (upper right), $(x_2,x_4,m)$ (lower
left), $(x_1,x_8,-m)$ (lower right), $(x_3,x_6,-\frac1m)$ (upper left); the signs come from $(-1)^{j+1}$ at far
corners and the powers of $m$ from the edge prefactors. If exactly one corner vanishes, the face is a vertex
and $\Gamma$ cannot have the edge. Both corners can vanish only for $j=2,4$, where the relation holds trivially.
Lemma~\ref{lem:rig} with $s=\pm m^{\pm1}$ gives a contradiction.
\end{proof}
This is where the first coefficient replaces $\Psi_5$: its corners supply $B_2$, and together with $B_3,B_4,B_5$
from $\Psi_2,\Psi_3,\Psi_4$ they leave no room for a diagonal edge.

\section{Shape of a common component}\label{sec:shape}
Let $b\ne0$, $m\ne1$, and let $\Gamma$ be an irreducible (over $\C$) common factor of $f_r,\Psi_2,\Psi_3,\Psi_4$.
By Theorem~\ref{thm:nodiag}, $\Newt\Gamma$ is a rectangle $[0,w]\times[0,h]$ after a monomial shift, and
$\Gamma$ has no monomial factor. Its four edge polynomials, $g_0(X)=\Gamma(X,0)$, $g_h(X)=[Y^h]\Gamma$,
$k_0(Y)=\Gamma(0,Y)$, $k_w(Y)=[X^w]\Gamma$, divide $D_{\rm bot},D_{\rm top},D_{\rm left},D_{\rm right}$
of Corollary~\ref{cor:D} for the effective pairs
\[
\text{bottom }(\Delta/d,\ B-bC/d),\quad \text{top }(a,B),\quad \text{left }(\Delta/a,\ (C-mbB/a)/m),\quad \text{right }(d,\ C/m).
\]
\begin{lemma}[exponent inequalities]\label{lem:ineq}
For $b\ne0$: $A_{\rm bot}=\Delta/d<a=A_{\rm top}$, $A_{\rm left}=\Delta/a<d=A_{\rm right}$, and
$A_{\rm bot}A_{\rm right}=A_{\rm top}A_{\rm left}=\Delta$.
\end{lemma}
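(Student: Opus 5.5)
The plan is to use only the definitions of the four effective exponents and the positivity hypotheses $a,d>0$, $m\ge1$, $\Delta=ad-mb^2>0$. By Proposition~\ref{prop:edges}, $A_{\rm bot}=A_{\rm b}=\Delta/d$, $A_{\rm left}=A_{\rm l}=\Delta/a$, $A_{\rm top}=a$ and $A_{\rm right}=d$.

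The two product identities are immediate: $A_{\rm bot}A_{\rm right}=(\Delta/d)\,d=\Delta$ and $A_{\rm top}A_{\rm left}=a\,(\Delta/a)=\Delta$.

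For the strict inequalities, write $\Delta/d=a-mb^2/d$. Since $m>0$, $d>0$ and $b\ne0$, the term $mb^2/d$ is strictly positive, so $A_{\rm bot}<a$. Symmetrically, $\Delta/a=d-mb^2/a<d$. The inequalities are strict precisely because $b\ne0$; for $b=0$ they become equalities, which is why the decoupled case is treated separately. Positivity of $\Delta$ also gives $0<A_{\rm bot}$ and $0<A_{\rm left}$, so all four exponents lie in $\Q_{>0}$. This is the range needed to apply Lemma~\ref{lem:edgediv} and Corollary~\ref{cor:D}.

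There is no real obstacle; this is a direct calculation. The only point to check is that the effective pairs in Proposition~\ref{prop:edges} are normalized as stated, in particular with the $m$ absorbed into the prefactors rather than into $A$. This was already fixed when the bottom edge was identified: the propagator there is $1/(A_{\rm b}+X)$ with $\det\widetilde H|_{Y=0}=md(A_{\rm b}+X)$. The left edge is the analogous computation at $X=0$, where $\det\widetilde H=am(A_{\rm l}+Y)$.
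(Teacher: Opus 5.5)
Your proof is correct and is the direct computation the lemma rests on; the paper states the lemma without a written proof. The argument is that $\Delta/d=a-mb^2/d<a$ and $\Delta/a=d-mb^2/a<d$ because $m,a,d>0$ and $b\ne0$, while the two product identities follow immediately from the definitions.
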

\begin{corollary}\label{cor:shape}
(i) $\Gamma$ is not a line: a line $X=X^*$ or $Y=Y^*$ through $\{X,Y>0\}$ needs the same positive Nahm root on
opposite edges, hence equal exponents there. (ii) $w\ge2$ requires bottom and top pairs both special
($A\in\{\frac12,1,2\}$ with the shifts of Corollary~\ref{cor:D}) with $A_{\rm bot}<A_{\rm top}$; $h\ge2$ likewise
for left and right. (iii) $w,h\le3$. (iv) If $w,h\ge2$ then all four pairs are special, and
Lemma~\ref{lem:ineq} leaves exactly five exponent patterns, Table~\ref{tab:T}.
\end{corollary}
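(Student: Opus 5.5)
The plan is to prove the four parts in order, using only the edge data of $\Gamma$ from Corollary~\ref{cor:D} and the inequalities of Lemma~\ref{lem:ineq}.

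First, Lemma~\ref{lem:ineq} itself. Since $\Delta=ad-mb^2$ and $b\ne0$, $m>0$, we have $\Delta<ad$. Hence $\Delta/d<a$ and $\Delta/a<d$. The product identities are immediate.

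\emph{Part (i).} Suppose $\Gamma=X-X^*$ with $X^*>0$. Then $\Gamma$ is its own bottom and top edge polynomial $g_0=g_h$, so $X^*$ is a positive root of both $D_{\rm bot}$ and $D_{\rm top}$. In Corollary~\ref{cor:D} the only non-monomial factors with positive roots are $Z^2-Z-1$ (root $\varphi$, exponent $A=2$), $Z^2+Z-1$ (root $1/\varphi$, $A=\frac12$) and $Z-1$ (root $1$, $A=1$). These three roots are distinct, so the positive root determines $A$. A common positive root therefore forces $A_{\rm bot}=A_{\rm top}$, contradicting Lemma~\ref{lem:ineq}. The case $Y=Y^*$ is the same with the left and right edges.

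\emph{Parts (ii) and (iii).} Suppose $w\ge2$. Then $g_0$ and $g_h$ have degree $w\ge2$, have no root at $0$, and divide $D_{\rm bot}$ and $D_{\rm top}$. If a pair is generic, its $D$ is $1+Z$, which has degree $1<w$. So both pairs are special. The strict inequality $A_{\rm bot}<A_{\rm top}$ comes from Lemma~\ref{lem:ineq}, and the same argument handles $h$. For (iii), when $m\ne1$ the $A=1$ entries should be excluded or at least limited. If both opposite pairs are special with distinct exponents, at most one of them has $A=1$. The other has $D$ of degree $3$ ($A=2$ or $\frac12$ with modular shift), which gives $w\le3$. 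I must also check that the $A=1$ entry of degree $7$ cannot pair with a higher-degree partner; here the degree is capped by the smaller of the two $D$'s, which is $\le3$ whenever the exponents differ. I would also invoke Lemma~\ref{lem:width}, which gives $w,h\le2$ outright because $\Gamma\mid f_r,\Psi_2$. This shortcut may be cleaner, and it subsumes (iii).

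\emph{Part (iv).} With all four pairs special, each exponent lies in $\{\frac12,1,2\}$. We need $A_{\rm bot}<A_{\rm top}$, $A_{\rm left}<A_{\rm right}$, and $A_{\rm bot}A_{\rm right}=A_{\rm top}A_{\rm left}$. Enumerating the ordered pairs (bot, top) from $\{(\frac12,1),(\frac12,2),(1,2)\}$, each together with (left, right) from the same set, and imposing the product equality leaves the solutions of Table~\ref{tab:T}. This is a finite check of $9$ combinations; I expect five to survive. The main obstacle is the bookkeeping in (iii): one has to confirm that Corollary~\ref{cor:D} really caps the degree once exponents differ, and that roots at $Z=-1$ (from the $(1+Z)^k$ factors) are handled. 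Since $-1$ is a root of every edge gcd, the positive-root argument alone does not rule out factors like $(1+Z)^2$ on one edge, and the width Lemma~\ref{lem:width} is what I would lean on there.
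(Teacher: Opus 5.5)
Your proposal is correct and follows the reasoning the paper leaves implicit (the paper gives no separate proof). For (i), the positive edge roots $\varphi,\varphi^{-1},1$ each determine the exponent. For (ii), $\deg D=1$ for generic pairs. For (iii), $w\le\min(\deg D_{\rm bot},\deg D_{\rm top})\le3$ once $A_{\rm bot}\ne A_{\rm top}$. For (iv), the nine-case enumeration under $A_{\rm bot}A_{\rm right}=A_{\rm top}A_{\rm left}$ leaves exactly T1--T5; you should actually carry that enumeration out rather than "expect" five survivors, although it is immediate.

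Your closing worry about $(1+Z)^k$ factors is unnecessary, because the degree cap does not depend on where the roots lie. Your alternative for (iii) via Lemma~\ref{lem:width} is also valid and gives the stronger bound $w,h\le2$.
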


\section{Casework for \texorpdfstring{$b\ne0$}{b nonzero}}\label{sec:casework}
Throughout, $\varphi=\frac{1+\sqrt5}2$ and the possible edge roots are
$-1,\ \varphi,\ \bar\varphi=-\varphi^{-1},\ \varphi^{-1},\ -\varphi,\ 1$ (Corollary~\ref{cor:D}).

\subsection{Rectangles of size \texorpdfstring{$(1,1)$}{(1,1)}}
$\Gamma=\gamma_{00}+\gamma_{10}X+\gamma_{01}Y+\gamma_{11}XY$ with edge roots $r_{\rm b},r_{\rm t},s_{\rm l},s_{\rm r}$.
Matching coefficients gives the corner relation $r_{\rm b}s_{\rm r}=r_{\rm t}s_{\rm l}$, irreducibility is
$r_{\rm b}\ne r_{\rm t}$, and then $\Gamma\propto(X-r_{\rm t})Y-s_{\rm r}(X-r_{\rm b})$. Each root $\ne-1$ forces the
type of its edge. Requiring a point with $X,Y>0$ and the constraints of Lemma~\ref{lem:ineq} leaves the
$49$ configurations of Table~\ref{tab:11}.
\begin{longtable}{cccc|cccc|l}
\caption{The $49$ admissible $(1,1)$ configurations: edge roots (bottom, top, left, right), forced exponents, and how each is excluded (conic: all branches empty; inadm.: only inadmissible solutions; T$i$: routed to family T$i$, Section~\ref{sec:T}).}\label{tab:11}\\
\toprule
$r_{\rm b}$&$r_{\rm t}$&$s_{\rm l}$&$s_{\rm r}$&$A_{\rm b}$&$A_{\rm t}$&$A_{\rm l}$&$A_{\rm r}$&excluded by\\
\midrule
\endfirsthead
\toprule
$r_{\rm b}$&$r_{\rm t}$&$s_{\rm l}$&$s_{\rm r}$&$A_{\rm b}$&$A_{\rm t}$&$A_{\rm l}$&$A_{\rm r}$&excluded by\\
\midrule
\endhead
$-1$ & $\varphi$ & $-1$ & $\varphi$ & $-$ & $2$ & $-$ & $2$ & T4 \\
$-1$ & $\varphi$ & $\varphi^{-1}$ & $-1$ & $-$ & $2$ & $\tfrac{1}{2}$ & $-$ & T3 \\
$-1$ & $\varphi^{-1}$ & $-1$ & $\varphi^{-1}$ & $-$ & $\tfrac{1}{2}$ & $-$ & $\tfrac{1}{2}$ & inadm. \\
$-1$ & $\varphi^{-1}$ & $\varphi$ & $-1$ & $-$ & $\tfrac{1}{2}$ & $2$ & $-$ & inadm. \\
$-1$ & $\varphi^{-1}$ & $-\varphi$ & $1$ & $-$ & $\tfrac{1}{2}$ & $\tfrac{1}{2}$ & $1$ & conic \\
$-1$ & $\varphi^{-1}$ & $1$ & $\bar\varphi$ & $-$ & $\tfrac{1}{2}$ & $1$ & $2$ & conic \\
$-1$ & $-\varphi$ & $\varphi^{-1}$ & $1$ & $-$ & $\tfrac{1}{2}$ & $\tfrac{1}{2}$ & $1$ & conic \\
$-1$ & $-\varphi$ & $1$ & $\varphi$ & $-$ & $\tfrac{1}{2}$ & $1$ & $2$ & conic \\
$-1$ & $1$ & $-1$ & $1$ & $-$ & $1$ & $-$ & $1$ & inadm. \\
$-1$ & $1$ & $\varphi^{-1}$ & $\bar\varphi$ & $-$ & $1$ & $\tfrac{1}{2}$ & $2$ & conic \\
$-1$ & $1$ & $-\varphi$ & $\varphi$ & $-$ & $1$ & $\tfrac{1}{2}$ & $2$ & conic \\
$-1$ & $1$ & $1$ & $-1$ & $-$ & $1$ & $1$ & $-$ & inadm. \\
$\varphi$ & $-1$ & $-1$ & $\varphi^{-1}$ & $2$ & $-$ & $-$ & $\tfrac{1}{2}$ & inadm. \\
$\varphi$ & $-1$ & $\varphi$ & $-1$ & $2$ & $-$ & $2$ & $-$ & inadm. \\
$\varphi$ & $-1$ & $-\varphi$ & $1$ & $2$ & $-$ & $\tfrac{1}{2}$ & $1$ & conic \\
$\varphi$ & $-1$ & $1$ & $\bar\varphi$ & $2$ & $-$ & $1$ & $2$ & conic \\
$\bar\varphi$ & $-1$ & $\varphi^{-1}$ & $1$ & $2$ & $-$ & $\tfrac{1}{2}$ & $1$ & conic \\
$\bar\varphi$ & $-1$ & $1$ & $\varphi$ & $2$ & $-$ & $1$ & $2$ & conic \\
$\varphi^{-1}$ & $-1$ & $-1$ & $\varphi$ & $\tfrac{1}{2}$ & $-$ & $-$ & $2$ & T2 \\
$\varphi^{-1}$ & $-1$ & $\varphi^{-1}$ & $-1$ & $\tfrac{1}{2}$ & $-$ & $\tfrac{1}{2}$ & $-$ & T1 \\
$\varphi^{-1}$ & $\varphi$ & $\varphi^{-1}$ & $\varphi$ & $\tfrac{1}{2}$ & $2$ & $\tfrac{1}{2}$ & $2$ & conic \\
$\varphi^{-1}$ & $\bar\varphi$ & $-1$ & $1$ & $\tfrac{1}{2}$ & $2$ & $-$ & $1$ & conic \\
$\varphi^{-1}$ & $\bar\varphi$ & $\varphi^{-1}$ & $\bar\varphi$ & $\tfrac{1}{2}$ & $2$ & $\tfrac{1}{2}$ & $2$ & conic \\
$\varphi^{-1}$ & $\bar\varphi$ & $-\varphi$ & $\varphi$ & $\tfrac{1}{2}$ & $2$ & $\tfrac{1}{2}$ & $2$ & conic \\
$\varphi^{-1}$ & $\bar\varphi$ & $1$ & $-1$ & $\tfrac{1}{2}$ & $2$ & $1$ & $-$ & conic \\
$\varphi^{-1}$ & $1$ & $-1$ & $-\varphi$ & $\tfrac{1}{2}$ & $1$ & $-$ & $\tfrac{1}{2}$ & conic \\
$\varphi^{-1}$ & $1$ & $\bar\varphi$ & $-1$ & $\tfrac{1}{2}$ & $1$ & $2$ & $-$ & conic \\
$\varphi^{-1}$ & $1$ & $\varphi^{-1}$ & $1$ & $\tfrac{1}{2}$ & $1$ & $\tfrac{1}{2}$ & $1$ & conic \\
$\varphi^{-1}$ & $1$ & $1$ & $\varphi$ & $\tfrac{1}{2}$ & $1$ & $1$ & $2$ & conic \\
$-\varphi$ & $\varphi$ & $-1$ & $1$ & $\tfrac{1}{2}$ & $2$ & $-$ & $1$ & conic \\
$-\varphi$ & $\varphi$ & $\varphi^{-1}$ & $\bar\varphi$ & $\tfrac{1}{2}$ & $2$ & $\tfrac{1}{2}$ & $2$ & conic \\
$-\varphi$ & $\varphi$ & $-\varphi$ & $\varphi$ & $\tfrac{1}{2}$ & $2$ & $\tfrac{1}{2}$ & $2$ & conic \\
$-\varphi$ & $\varphi$ & $1$ & $-1$ & $\tfrac{1}{2}$ & $2$ & $1$ & $-$ & conic \\
$-\varphi$ & $1$ & $-1$ & $\varphi^{-1}$ & $\tfrac{1}{2}$ & $1$ & $-$ & $\tfrac{1}{2}$ & conic \\
$-\varphi$ & $1$ & $\varphi$ & $-1$ & $\tfrac{1}{2}$ & $1$ & $2$ & $-$ & conic \\
$-\varphi$ & $1$ & $-\varphi$ & $1$ & $\tfrac{1}{2}$ & $1$ & $\tfrac{1}{2}$ & $1$ & conic \\
$-\varphi$ & $1$ & $1$ & $\bar\varphi$ & $\tfrac{1}{2}$ & $1$ & $1$ & $2$ & conic \\
$1$ & $-1$ & $-1$ & $1$ & $1$ & $-$ & $-$ & $1$ & inadm. \\
$1$ & $-1$ & $\varphi^{-1}$ & $\bar\varphi$ & $1$ & $-$ & $\tfrac{1}{2}$ & $2$ & conic \\
$1$ & $-1$ & $-\varphi$ & $\varphi$ & $1$ & $-$ & $\tfrac{1}{2}$ & $2$ & conic \\
$1$ & $-1$ & $1$ & $-1$ & $1$ & $-$ & $1$ & $-$ & inadm. \\
$1$ & $\varphi$ & $-1$ & $-\varphi$ & $1$ & $2$ & $-$ & $\tfrac{1}{2}$ & conic \\
$1$ & $\varphi$ & $\bar\varphi$ & $-1$ & $1$ & $2$ & $2$ & $-$ & conic \\
$1$ & $\varphi$ & $\varphi^{-1}$ & $1$ & $1$ & $2$ & $\tfrac{1}{2}$ & $1$ & conic \\
$1$ & $\varphi$ & $1$ & $\varphi$ & $1$ & $2$ & $1$ & $2$ & conic \\
$1$ & $\bar\varphi$ & $-1$ & $\varphi^{-1}$ & $1$ & $2$ & $-$ & $\tfrac{1}{2}$ & conic \\
$1$ & $\bar\varphi$ & $\varphi$ & $-1$ & $1$ & $2$ & $2$ & $-$ & conic \\
$1$ & $\bar\varphi$ & $-\varphi$ & $1$ & $1$ & $2$ & $\tfrac{1}{2}$ & $1$ & conic \\
$1$ & $\bar\varphi$ & $1$ & $\bar\varphi$ & $1$ & $2$ & $1$ & $2$ & conic \\
\bottomrule
\end{longtable}
\noindent For each configuration and each branch of the conic condition (all six
pairs $0\le v\le e\le2$, including the branch $q_\Gamma=c\kappa^2$, $c\ne0$, which has no nonzero finite root and is
empty in all $49$ configurations), the system
consisting of the branch equations and the forced exponent equations, split into rational and
$\sqrt5$-parts, was reduced to a lex Gr\"obner basis in $(m,d,a,b)$ after saturation by
$abdm\Delta\cdot N(c_vc_e)$ ($N$ = norm from $\Q(\sqrt5)$). The positivity filter is exact: sign changes of
$s_{\rm r}(X-r_{\rm b})/(X-r_{\rm t})$ on $(0,\infty)$ are tested between the exact critical points. Result: $37$
configurations have all branches empty. Twelve (configuration, branch) pairs are nonempty:
\begin{itemize}
\item eight are inadmissible: they force $mb^2\in\{-1,-\frac12,-\frac14\}$, or $m=\pm1$ with $b$ a root of
$b^2\mp4b+2$ or $7b^2+4b-2$ (no rational roots);
\item four are admissible and force exactly the exponent patterns and modular-edge shift constraints of the
families T4, T3, T2, T1 of Table~\ref{tab:T}:
$(-1,\varphi,-1,\varphi)$ [generic branch, $a=d=2$, $mb^2=2$],
$(-1,\varphi,\varphi^{-1},-1)$ [degree drop, $a=2,d=1$, $mb^2=1$],
$(\varphi^{-1},-1,-1,\varphi)$ [$\kappa$-factor, $a=1,d=2$, $mb^2=1$],
$(\varphi^{-1},-1,\varphi^{-1},-1)$ [no finite root, $a=d=1$, $mb^2=\frac12$].
\end{itemize}
These four are excluded by Section~\ref{sec:T}: on T1--T4, $\Psi_2$ and $\Psi_3$ have no common component
at all when $m\ne1$. (An example: $a=d=2$, $b=1$, $m=2$, $\Gamma=(X-\varphi)Y-\varphi(X+1)$ passes the conic test,
$q_\Gamma\propto(\kappa+1)(\kappa+2)$ and $\widehat Q=2(\kappa+1)^2(\kappa+2)$, but $\Psi_2,\Psi_3$ are coprime there.)

\subsection{Thin rectangles}
For $w\in\{2,3\}$, $h=1$: $\Gamma=Y\,G_1(X)-s_{\rm r}G_0(X)$ with monic $G_0\mid D_{\rm bot}$, $G_1\mid D_{\rm top}$
of degree $w$, $\gcd(G_0,G_1)=1$ (else $\Gamma$ is reducible), $s_{\rm r}$ a right-edge root and
$s_{\rm l}=s_{\rm r}G_0(0)/G_1(0)$ a left-edge root; and the transposed case. Width $\ge2$ requires
bottom and top special with $A_{\rm bot}<A_{\rm top}$, i.e. one of the pairs
$(\frac12,1),(\frac12,1_E),(\frac12,2),(1,2),(1_E,2)$ ($1_E$: $A=1$ with Euler shift). With positivity and
Lemma~\ref{lem:ineq} this leaves the $38$ configurations of Table~\ref{tab:thin}.
\begin{longtable}{c|cc|cc|cc|l}
\caption{The $38$ thin-rectangle configurations. H: $w\ge2,h=1$; V: transposed. Types of the two long edges, root multisets of $G_0,G_1$, and the two short-edge roots.}\label{tab:thin}\\
\toprule
orient.& type$_0$ & type$_1$ & roots of $G_0$ & roots of $G_1$ & $s$ & $s'$ & excluded by\\
\midrule
\endfirsthead
\toprule
orient.& type$_0$ & type$_1$ & roots of $G_0$ & roots of $G_1$ & $s$ & $s'$ & excluded by\\
\midrule
\endhead
H & $\tfrac12$ & $2$ & $-1,\varphi^{-1}$ & $\varphi,\bar\varphi$ & $-1$ & $\bar\varphi$ & conic \\
H & $\tfrac12$ & $2$ & $-1,\varphi^{-1}$ & $\varphi,\bar\varphi$ & $-\varphi$ & $-1$ & conic \\
H & $\tfrac12$ & $2$ & $-\varphi,\varphi^{-1}$ & $-1,\bar\varphi$ & $-1$ & $\varphi$ & conic \\
H & $\tfrac12$ & $2$ & $-\varphi,\varphi^{-1}$ & $-1,\bar\varphi$ & $\varphi^{-1}$ & $-1$ & conic \\
H & $\tfrac12$ & $2$ & $-\varphi,\varphi^{-1}$ & $\varphi,\bar\varphi$ & $-1$ & $-1$ & conic \\
H & $\tfrac12$ & $2$ & $-\varphi,\varphi^{-1}$ & $-1,\varphi$ & $-1$ & $\bar\varphi$ & conic \\
H & $\tfrac12$ & $2$ & $-\varphi,\varphi^{-1}$ & $-1,\varphi$ & $-\varphi$ & $-1$ & conic \\
H & $\tfrac12$ & $2$ & $-1,-\varphi$ & $\varphi,\bar\varphi$ & $-1$ & $\varphi$ & conic \\
H & $\tfrac12$ & $2$ & $-1,-\varphi$ & $\varphi,\bar\varphi$ & $\varphi^{-1}$ & $-1$ & conic \\
H & $\tfrac12$ & $1$ & $-\varphi,\varphi^{-1}$ & $-1,-1$ & $-1$ & $1$ & conic \\
H & $\tfrac12$ & $1$ & $-\varphi,\varphi^{-1}$ & $-1,-1$ & $1$ & $-1$ & conic \\
H & $\tfrac12$ & $1_E$ & $-\varphi,\varphi^{-1}$ & $-1,1$ & $-1$ & $-1$ & inadm. \\
H & $\tfrac12$ & $1_E$ & $-\varphi,\varphi^{-1}$ & $-1,-1$ & $-1$ & $1$ & conic \\
H & $\tfrac12$ & $1_E$ & $-\varphi,\varphi^{-1}$ & $-1,-1$ & $1$ & $-1$ & conic \\
H & $1$ & $2$ & $-1,-1$ & $\varphi,\bar\varphi$ & $-1$ & $1$ & conic \\
H & $1$ & $2$ & $-1,-1$ & $\varphi,\bar\varphi$ & $1$ & $-1$ & conic \\
H & $1_E$ & $2$ & $-1,1$ & $\varphi,\bar\varphi$ & $-1$ & $-1$ & inadm. \\
H & $1_E$ & $2$ & $-1,-1$ & $\varphi,\bar\varphi$ & $-1$ & $1$ & conic \\
H & $1_E$ & $2$ & $-1,-1$ & $\varphi,\bar\varphi$ & $1$ & $-1$ & conic \\
V & $\tfrac12$ & $2$ & $-1,\varphi^{-1}$ & $\varphi,\bar\varphi$ & $-1$ & $\bar\varphi$ & conic \\
V & $\tfrac12$ & $2$ & $-1,\varphi^{-1}$ & $\varphi,\bar\varphi$ & $-\varphi$ & $-1$ & conic \\
V & $\tfrac12$ & $2$ & $-\varphi,\varphi^{-1}$ & $-1,\bar\varphi$ & $-1$ & $\varphi$ & conic \\
V & $\tfrac12$ & $2$ & $-\varphi,\varphi^{-1}$ & $-1,\bar\varphi$ & $\varphi^{-1}$ & $-1$ & conic \\
V & $\tfrac12$ & $2$ & $-\varphi,\varphi^{-1}$ & $\varphi,\bar\varphi$ & $-1$ & $-1$ & conic \\
V & $\tfrac12$ & $2$ & $-\varphi,\varphi^{-1}$ & $-1,\varphi$ & $-1$ & $\bar\varphi$ & conic \\
V & $\tfrac12$ & $2$ & $-\varphi,\varphi^{-1}$ & $-1,\varphi$ & $-\varphi$ & $-1$ & conic \\
V & $\tfrac12$ & $2$ & $-1,-\varphi$ & $\varphi,\bar\varphi$ & $-1$ & $\varphi$ & conic \\
V & $\tfrac12$ & $2$ & $-1,-\varphi$ & $\varphi,\bar\varphi$ & $\varphi^{-1}$ & $-1$ & conic \\
V & $\tfrac12$ & $1$ & $-\varphi,\varphi^{-1}$ & $-1,-1$ & $-1$ & $1$ & conic \\
V & $\tfrac12$ & $1$ & $-\varphi,\varphi^{-1}$ & $-1,-1$ & $1$ & $-1$ & conic \\
V & $\tfrac12$ & $1_E$ & $-\varphi,\varphi^{-1}$ & $-1,1$ & $-1$ & $-1$ & points $P_1,P_3$ \\
V & $\tfrac12$ & $1_E$ & $-\varphi,\varphi^{-1}$ & $-1,-1$ & $-1$ & $1$ & conic \\
V & $\tfrac12$ & $1_E$ & $-\varphi,\varphi^{-1}$ & $-1,-1$ & $1$ & $-1$ & conic \\
V & $1$ & $2$ & $-1,-1$ & $\varphi,\bar\varphi$ & $-1$ & $1$ & conic \\
V & $1$ & $2$ & $-1,-1$ & $\varphi,\bar\varphi$ & $1$ & $-1$ & conic \\
V & $1_E$ & $2$ & $-1,1$ & $\varphi,\bar\varphi$ & $-1$ & $-1$ & points $P_2,P_4$ \\
V & $1_E$ & $2$ & $-1,-1$ & $\varphi,\bar\varphi$ & $-1$ & $1$ & conic \\
V & $1_E$ & $2$ & $-1,-1$ & $\varphi,\bar\varphi$ & $1$ & $-1$ & conic \\
\bottomrule
\end{longtable}
\noindent In each thin configuration the two long edges are special, so the forced
exponents fix $a$ and give $m=(A_{\rm top}-A_{\rm bot})d/b^2$ (H), resp.\ fix $d$ and give
$m=a(A_{\rm right}-A_{\rm left})/b^2$ (V), leaving two free parameters. The conic condition was applied in all
branches $(v,e)$ as in Section~\ref{sec:structure}, with saturated Gr\"obner bases. $34$ configurations have all
branches empty. The remaining four have only finitely many solutions; their rational points and admissibility:
\begin{center}\small
\begin{tabular}{lll}
\toprule
config.\ (Table~\ref{tab:thin}) & rational points $(b,\cdot)$ & admissible\\
\midrule
H, $\frac12$/$1_E$ & $(-\frac52,d{=}1)$, $(5,d{=}2)$, $(-10,d{=}-1)$ & none ($m<1$)\\
H, $1_E$/$2$ & $(-5,d{=}1)$, $(-20,d{=}-2)$, $(5,d{=}2)$ & none ($m<1$)\\
V, $\frac12$/$1_E$ & $(\frac1{20},a{=}-1)$, $(\frac15,a{=}2)$, $(-\frac15,a{=}1)$ & $P_1$, $P_3$\\
V, $1_E$/$2$ & $(\frac1{10},a{=}-2)$, $(\frac25,a{=}2)$, $(-\frac15,a{=}1)$ & $P_2$, $P_4$\\
\bottomrule
\end{tabular}
\end{center}
Here, in coordinates $(a,b,d,m)$,
\[
 P_1=(2,\tfrac15,1,25),\quad P_2=(1,-\tfrac15,2,25),\quad P_3=(1,-\tfrac15,1,\tfrac{25}2),\quad P_4=(2,\tfrac25,2,\tfrac{25}2).
\]
The horizontal points with $d>0$ have $m=\frac1{25}$ or $\frac2{25}$; they are the coordinate swaps
(Lemma~\ref{lem:norm}) of $P_1,\dots,P_4$. At $P_1,\dots,P_4$, for every shift allowed by the special edges
($6$ choices each), $\gcd(\Psi_2,\Psi_3)=1$ exactly (\texttt{pole/thin\_rational.py} checks a superset of $20$ shifts at each point). The points $P_3,P_4$, with
$m=\frac{25}2$, occur only once non-integral $m$ is allowed. The thin list itself (Table~\ref{tab:thin}) was also reproduced by a separate script with exact positivity tests.

\subsection{Rectangles of size at least \texorpdfstring{$2\times2$}{2x2}, and the families T1--T4}\label{sec:T}
\begin{table}[h]\centering
\caption{All-special exponent patterns for $b\ne0$ (Corollary~\ref{cor:shape}(iv)); $k>0$ is rational.}\label{tab:T}
\begin{tabular}{lcccccl}
\toprule
 & $a$ & $d$ & $\Delta$ & $m$ & $b$ & shifts fixed by the modular edges\\
\midrule
T1 & 1 & 1 & $\frac12$ & $2k^2$ & $\pm\frac1{2k}$ & $\beta_{\rm b},\beta_{\rm l}\in\{0,\frac12\}$ determine $B,C$\\
T2 & 1 & 2 & 1 & $k^2$ & $\pm\frac1k$ & $C/m\in\{0,1\}$, $\beta_{\rm b}\in\{0,\frac12\}$\\
T3 & 2 & 1 & 1 & $k^2$ & $\pm\frac1k$ & $B\in\{0,1\}$, $\beta_{\rm l}\in\{0,\frac12\}$\\
T4 & 2 & 2 & 2 & $2k^2$ & $\pm\frac1k$ & $B\in\{0,1\}$, $C/m\in\{0,1\}$\\
T5 & 2 & 2 & 1 & $3k^2$ & $\pm\frac1k$ & all four modular; only $B=C=0$ is consistent\\
\bottomrule
\end{tabular}
\end{table}
\noindent With $k$ symbolic and the shifts as in Table~\ref{tab:T}, the gcd over
sampled lines of $\operatorname{Res}_Y(\Psi_2,\Psi_3)$ and of $\operatorname{Res}_X(\Psi_2,\Psi_3)$, as a
polynomial in $k$, is a power of $k$ in every case except $k=\pm1$ in some T2/T3 shift choices, where $m=k^2=1$.
Hence for $m\ne1$ no member of these families has a common component of $\Psi_2,\Psi_3$. The computation treats $k$
as a variable, so it covers every rational $k$; a direct check at $k\in\{\frac13,\frac12,\frac23,\frac32,\frac53\}$, both
signs of $b$ and all admissible shifts, is consistent with this (\texttt{pole/spot\_rational\_m.py}; a diagnostic,
not a certificate).
\emph{Validity of the sampled resultants.} If $\Gamma$ is a common component with $\deg_Y\Gamma>0$, its leading
$Y$-coefficient is its top edge polynomial, whose roots lie in $\{-1,1,\pm\varphi,\pm\varphi^{-1}\}$
(Corollary~\ref{cor:D}). The sample values $x_1\in\{\frac27+\frac{7j}{11}\}$ are rational and $\ne\pm1$, so
$\Gamma(x_1,Y)$ keeps positive degree and $\operatorname{Res}_Y(\Psi_2(x_1,\cdot),\Psi_3(x_1,\cdot))$ vanishes at the
corresponding parameter; likewise for $\operatorname{Res}_X$ with $y_1\in\{\frac37+\frac{5j}{11}\}$. Thus the gcd over
samples is a multiple of every exceptional polynomial, and an empty set of admissible roots is a proof.
(The families contain genuinely modular members: at $k=1$, T4, T1 and T5 contain Examples 1 and 4, 2 and 3,
and 7 and 8 of Table~\ref{tab:known}. $\Psi_2$ and $\Psi_3$ are coprime there as well.)

\begin{corollary}\label{cor:nocomp}
For $m\ne1$ and $b\ne0$, no common component of $f_r,\Psi_2,\Psi_3,\Psi_4$ meets $\{X,Y>0\}$.
\end{corollary}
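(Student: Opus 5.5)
The proof assembles the case analysis of Sections~\ref{sec:shape} and~\ref{sec:casework}. Let $\Gamma$ be an irreducible (over $\C$) common factor of $f_r,\Psi_2,\Psi_3,\Psi_4$ that meets $\{X,Y>0\}$. By Lemma~\ref{lem:first}(d), $f_r\not\equiv0$ since $b\ne0$. Theorem~\ref{thm:nodiag} then applies, because $m\ne1$. So after a monomial shift $\Newt\Gamma$ is a rectangle $[0,w]\times[0,h]$, and a monomial factor is excluded by irreducibility and positivity. Each edge polynomial of $\Gamma$ divides the corresponding edge of every $\Psi_j$, hence divides the gcd $D$ of Corollary~\ref{cor:D} for the effective pairs listed in Section~\ref{sec:shape}. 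By Corollary~\ref{cor:shape}(i), $\Gamma$ is not a line, so $w,h\ge1$. By Corollary~\ref{cor:shape}(iii), $w,h\le3$. This is also consistent with Lemma~\ref{lem:width}, which gives $w,h\le2$ directly for $b\ne0$.

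I would then split into three shapes.

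\emph{Case $(1,1)$.} The edge roots lie in the finite set $\{-1,1,\pm\varphi,\pm\varphi^{-1}\}$. The corner relation and positivity reduce $\Gamma$ to the $49$ configurations of Table~\ref{tab:11}. The conic condition after Theorem~\ref{thm:conic} is applied branchwise via saturated Gr\"obner bases. This removes all configurations except eight inadmissible branches, which force irrational $b$ or $mb^2<0$, and four admissible ones lying in T1--T4.

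\emph{Thin case ($w\ge2$, $h=1$, or the transpose).} The two long edges must be special with $A_{\rm bot}<A_{\rm top}$ (resp.\ $A_{\rm left}<A_{\rm right}$), by Corollary~\ref{cor:shape}(ii) and Lemma~\ref{lem:ineq}. This gives the $38$ configurations of Table~\ref{tab:thin}. The conic condition leaves only finitely many rational points, of which the admissible ones are $P_1,\dots,P_4$. At these points $\gcd(\Psi_2,\Psi_3)=1$ for every allowed shift, so there is no common component.

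\emph{Case $w,h\ge2$.} All four edges are special. By Corollary~\ref{cor:shape}(iv) the pattern is one of T1--T5.

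It remains to dispose of the families T1--T5 for $m\ne1$, and I expect this to be the main obstacle, since it must hold uniformly in the rational parameter $k$. I would rely on the $k$-symbolic computation of Section~\ref{sec:T}. For each family and each shift choice, take the gcd over sampled rational lines of $\operatorname{Res}_Y(\Psi_2,\Psi_3)$ and $\operatorname{Res}_X(\Psi_2,\Psi_3)$. This gcd is a power of $k$, apart from $k=\pm1$ in some T2/T3 shift choices, where $m=1$. The sampling is justified by the validity argument given there: the top edge of $\Gamma$ has roots only in the special set, so specialising at a rational $x_1\ne\pm1$ keeps $\Gamma(x_1,Y)$ of positive degree. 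Consequently any common component would make every sampled resultant vanish. For T5 I would additionally use that only $B=C=0$ is consistent, and run the same computation. Since $k>0$, the gcd being a power of $k$ excludes every admissible member, with $m=1$ as the only exception. Hence no common component of $\Psi_2,\Psi_3$, and a fortiori none of $f_r,\Psi_2,\Psi_3,\Psi_4$, meets the positive quadrant. Combining the three cases proves the corollary.
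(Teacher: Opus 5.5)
Your proposal is correct and follows essentially the same route as the paper. The argument runs as follows: $f_r\not\equiv0$ and Theorem~\ref{thm:nodiag} give a rectangular Newton polygon whose edges divide the gcds of Corollary~\ref{cor:D}. The $(1,1)$ case is then disposed of by the branchwise conic Gr\"obner computations, the thin case by the rational points $P_1,\dots,P_4$ with $\gcd(\Psi_2,\Psi_3)=1$, and the all-special case by the $k$-symbolic sampled resultants on T1--T5. That last computation also absorbs the four admissible $(1,1)$ configurations, as you correctly arrange.
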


\section{The decoupled case \texorpdfstring{$b=0$}{b=0}}\label{sec:b0}
\begin{proposition}
Let $m\ne1$, $b=0$, and let $\Gamma$ be a common component of $f_r,\Psi_2,\Psi_3,\Psi_4$ through the Nahm point.
Then $K\subset\Q(\sqrt5)$.
\end{proposition}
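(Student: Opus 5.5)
The plan is to exploit that for $b=0$ everything decouples into two rank-one problems, and then to reuse the edge machinery of Sections~\ref{sec:structure}--\ref{sec:shape}. For $b=0$ the Nahm equations read $1-x=x^a$ and $1-y=y^d$, so $K=\Q(x,y)$ is the compositum of the two rank-one Nahm fields. The formal Gaussian integral also factorizes: $\widetilde H$ is diagonal and the vertices do not mix. Hence $\ell_j=\lambda^{(j)}_a(B;X)+m^j\lambda^{(j)}_d(C/m;Y)$, and after clearing denominators
\[
\Psi_j=m^{3j}\bigl[(d+Y)^{3j}N_j(a,B;X)+m^{j}(a+X)^{3j}N_j(d,C/m;Y)\bigr],
\]
up to the normalisation of Proposition~\ref{prop:edges}. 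I would first verify this identity directly from the definitions, or read it off from Proposition~\ref{prop:edges} with $A_{\rm b}=A_{\rm t}=a$ and $A_{\rm l}=A_{\rm r}=d$. It suffices to prove $x\in\Q(\sqrt5)$ and $y\in\Q(\sqrt5)$, and for that it suffices to show $a,d\in\{\frac12,1,2\}$. Indeed, $a=2$ gives $x^2+x-1=0$, $a=\frac12$ gives $X_0^2+X_0-1=0$ (Theorem~\ref{thm:rk1}), and $a=1$ gives $x=\frac12$; the same holds for $d$.

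\emph{Case 1: $\Gamma$ is a coordinate line.} Suppose $\Gamma$ is $X=X_0$. Substituting into the displayed formula, $\Psi_j(X_0,Y)\equiv0$ in $Y$ forces both $N_j(a,B;X_0)=0$ and $N_j(d,C/m;Y)\equiv0$ whenever $a+X_0\ne0$, which holds since $X_0>0$. By Theorem~\ref{thm:rk1} and $X_0\notin\{0,-1\}$, the pair $(a,B)$ is modular, so $a\in\{\frac12,1,2\}$. The identical vanishing of $N_j(d,C/m;\cdot)$ for $j=2,3,4$ forces $d=1$ with an Euler shift; this is the same fact used in the proof of Lemma~\ref{lem:width}. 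Hence $y=\frac12$. The line $Y=Y_0$ is symmetric.

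\emph{Case 2: $\Gamma$ is not a line.} Theorem~\ref{thm:nodiag} applies verbatim for $b=0$, since it only uses corners and $m\ne1$. So $\Newt\Gamma$ is a rectangle $[0,w]\times[0,h]$ with $w,h\ge1$, and its edge polynomials divide the entries $D$ of Corollary~\ref{cor:D}. Now the opposite edges carry the same effective pair, $(a,B)$ for bottom and top and $(d,C/m)$ for left and right. If the bottom and top edges of $\Gamma$ had only the root $-1$, then for $w=h=1$ we would get $r_{\rm b}=r_{\rm t}$ and $\Gamma$ would be reducible. For larger $w$ the edges would be $(1+X)^w$, which Corollary~\ref{cor:D} allows only if $a=1$. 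Either way some edge root other than $-1$ must occur on each pair of opposite edges, which forces $a$ and $d$ to be special. A quicker alternative is to restrict to $Y=0$: there $\Gamma(X,0)$ divides $\gcd_j\Psi_j(X,0)=\gcd_j N_j(a,B;X)\cdot(\text{unit})$, and requiring $\Gamma$ to meet $\{X,Y>0\}$ with the sign pattern of Corollary~\ref{cor:shape}(i) should force a root $\ne -1$ on an edge.

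The main obstacle is making this last step airtight when the edges are pure powers of $(1+Z)$ with $A\notin\{\frac12,1,2\}$. In that situation the edge data give no information on $a$ or $d$, and I would fall back on the separated form of $\Psi_j$. On $\Gamma$ one has $\lambda^{(j)}_a(B;X)=-m^j\lambda^{(j)}_d(C/m;Y)$ for $j=2,3,4$, together with the analogous relation for $f_r$. Eliminating $Y$ between the $j=2$ and $j=3$ relations gives a one-variable relation $R(X)=0$ that must hold identically on the $X$-projection of $\Gamma$. Comparing pole orders at $X=-a$ and $Y=-d$, which have orders $3j$ and are the only poles, forces $\Gamma$ to pass through $(-a,-d)$ with matched multiplicities. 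This should contradict the coprimality of $(d+Y)^{3j}$ and $N_j(d,\cdot)$ for $d\ne1$, noting that $N_j(d,\beta;-d)\ne0$ by the analogue of Lemma~\ref{lem:edgediv}. This pole argument is where I expect most of the real work; the remaining cases reduce to the rank-one classification.
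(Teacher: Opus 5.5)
Your Case~1 is fine. The paper argues through $\ell_j$ at the Nahm point instead; your comparison of $Y^{3j}$-coefficients even shows the case is vacuous, because $N_3(d,C/m;\cdot)$ has a nonzero $B_4$-corner and never vanishes identically.

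\textbf{The gap is in Case~2.} Your claim that each pair of opposite edges must carry a root other than $-1$ is justified in two situations only: for $w=h=1$, where two edges proportional to $1+X$ make $\Gamma$ reducible, and for edges of degree at least $2$. It is not justified for thin rectangles. If $w=1$ and $h\ge2$, then $\Gamma=k_0(Y)+Xk_1(Y)$. Both $\Gamma(X,0)$ and $[Y^h]\Gamma$ can be multiples of $1+X$ while $\Gamma$ is irreducible, e.g.\ $(1+Y+Y^2)+X(1+Y)^2$. The short edges then say nothing about $a$, and symmetrically nothing about $d$ when $h=1$, $w\ge2$. This is exactly the situation you flag as the main obstacle.

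It is settled by the long edges rather than by poles. The polynomials $k_0$ and $k_1$ must be coprime, since a common factor would divide $\Gamma$. Both have degree $h\ge2$, and both divide the same $D(d,C/m)$. By Corollary~\ref{cor:D} no entry $D$ has two coprime divisors of degree $\ge2$: it has at most three distinct roots, and the powers of $1+Z$ have only the coprime partner $Z-1$, of degree one. So thin rectangles cannot occur. Your arguments for size $(1,1)$ and for $w,h\ge2$ then give $a,d\in\{\frac12,1,2\}$, which is the paper's proof.

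Your pole-order fallback does not close the gap as written. That $\Gamma$ passes through $(-a,-d)$ with matched orders is exactly what the separated equations predict. The non-vanishing of $N_j(d,C/m;-d)$ only produces that matching, so it contradicts nothing. A contradiction needs the leading Laurent coefficients at two different orders. Let $A_X$ and $A_Y$ be the leading coefficients of the $X$- and $Y$-parts of $\lambda_1$ at a place over $X=-a$. The rank-one leading terms are universal nonzero multiples of powers of $c_A=\frac5{24}A^2(A-1)^2$. Hence $\ell_2\equiv\ell_3\equiv0$ on $\Gamma$ force $A_X^2+A_Y^2=0$ and $A_X^3+A_Y^3=0$, which is impossible since $A_X\ne0$. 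Proposition~\ref{prop:decoupled} runs this argument with $\lambda_1$ and $\ell_2$.

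Carried out this way it is a genuinely different route, and for $a,d\ne1$ it rules out common components altogether. It needs $c_a,c_d\ne0$, however, so the cases $a=1$ or $d=1$ would still require the long-edge argument above.
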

\begin{proof}
At $b=0$ the bottom and top pairs coincide, $(a,B)$, and so do the left and right pairs, $(d,C/m)$;
$X_0,Y_0$ are the rank-one Nahm points of $a$ and $d$. Theorem~\ref{thm:nodiag} still applies. If
$\Gamma=g(X)$ is a line, $X_0$ is a positive root of $D(a,B)$, so $(a,B)$ is modular and $X_0\in\Q(\sqrt5)$;
then $\ell_j(X_0,Y_0)=\lambda_a^{(j)}(B;X_0)+m^j\lambda_d^{(j)}(C/m;Y_0)=m^j\lambda_d^{(j)}(C/m;Y_0)=0$ for
$j\le4$, so by Theorem~\ref{thm:rk1} $(d,C/m)$ is modular and $Y_0\in\Q(\sqrt5)$. Horizontal lines are
symmetric. For rectangles with $w,h\ge1$:
(i) size $(1,1)$ needs $r_{\rm b}\ne r_{\rm t}$, two distinct roots of the same $D(a,B)$, and $s_{\rm l}\ne s_{\rm r}$,
two distinct roots of $D(d,C/m)$; by Corollary~\ref{cor:D} this forces $a,d\in\{\frac12,1,2\}$;
(ii) thin rectangles ($w\ge2,h=1$ or transposed) need two coprime divisors of degree $\ge2$ of one and the same
$D$, which Corollary~\ref{cor:D} excludes ($D$ has at most three distinct roots, and in the powers of $1+Z$ only
one coprime partner, of degree $\le1$);
(iii) $w,h\ge2$ needs $\deg D(a,B)\ge2$ and $\deg D(d,C/m)\ge2$, hence $a,d\in\{\frac12,1,2\}$.
In cases (i) and (iii), $X_0$ and $Y_0$ are rank-one Nahm points for exponents in $\{\frac12,1,2\}$, so
$K\subseteq\Q(\sqrt5)$ regardless of the shifts.
\end{proof}

\section{Sixfold cancellation along the determinant conic}\label{sec:pole}
In the count of Section~\ref{sec:count}, the case in which a component $G$ of $f_r$ through $P$ divides $\Psi_2$ and
$\Psi_3$ and is cut only by $\Psi_4$ needs more than the vanishing $\Psi_j\in(D^j)$ along the determinant conic: it gives
$8(w+h)\le32$. Here we show that on such a component $\Psi_4$ vanishes to order $6$ along $D=0$, which gives $24$.
Throughout, $H=\bigl(\begin{smallmatrix}a+X&mb\\ mb&m(d+Y)\end{smallmatrix}\bigr)$, $D=\det H$, $\lambda_1$ is the first
coefficient of Section~\ref{sec:first}, and $\ell_j=\Psi_j/D^{3j}$ ($j\ge2$) are the higher logarithmic coefficients.

\subsection{A three-moment obstruction}
Let $Z$ be a standard formal Gaussian ($\EE Z^{2n}=(2n-1)!!$, odd moments zero) and
\[
 \EE\exp\bigl(usZ^3+vs^2Z^4+ws^3Z^5\bigr)=1+c_1s^2+c_2s^4+c_3s^6+O(s^8).
\]
By Wick's rule, $c_1=\frac{15}2u^2+3v$, $c_2=\frac{3465}8u^4+\frac{945}2u^2v+105uw+\frac{105}2v^2$ and
\[
 c_3=\tfrac{765765}{16}u^6+\tfrac{675675}{8}u^4v+\tfrac{45045}{2}u^3w+\tfrac{135135}{4}u^2v^2+10395uvw+\tfrac{3465}{2}v^3+\tfrac{945}{2}w^2 .
\]
\begin{lemma}\label{lem:moments}
Over a field of characteristic zero, $c_1=c_2=c_3=0$ implies $u=v=w=0$.
\end{lemma}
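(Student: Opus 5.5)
The proof is a direct elimination: use $c_1$ to solve for $v$, use $c_2$ to solve for $w$, and show that $c_3$ then reduces to a nonzero constant multiple of $u^6$. Only divisions by nonzero rationals occur, so the argument works over any field of characteristic zero.

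First, $c_1=0$ gives $v=-\tfrac52u^2$. Substituting this into $c_2$, the $u^4$ terms combine as
\[
\tfrac{3465}8-\tfrac{945}2\cdot\tfrac52+\tfrac{105}2\cdot\tfrac{25}4=\tfrac{3465-9450+2625}8=-420 .
\]
Hence
\[
c_2=-420u^4+105uw=105u\,(w-4u^3).
\]

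We now split into two cases.

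\emph{Case $u=0$.} Then $v=0$, and $c_2$ vanishes identically. Also $c_3=\tfrac{945}2w^2$, so $c_3=0$ forces $w=0$.

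\emph{Case $u\ne0$.} Then $c_2=0$ gives $w=4u^3$. With $v=-\tfrac52u^2$ and $w=4u^3$, every monomial of $c_3$ becomes a rational multiple of $u^6$. Writing each coefficient over the denominator $16$, the seven monomials of $c_3$, in the order displayed, contribute the numerators
\[
765765,\ -3378375,\ 1441440,\ 3378375,\ -1663200,\ -433125,\ 120960 .
\]
These sum to $231840$, so
\[
c_3=\tfrac{231840}{16}u^6=14490\,u^6\ne0,
\]
which contradicts $c_3=0$. Therefore $u=0$, and we are back in the first case, giving $u=v=w=0$.

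The only step where an error could creep in is the bookkeeping of the seven substituted coefficients of $c_3$. The essential point is that they do not cancel, since an identically vanishing $c_3$ on the curve $v=-\tfrac52u^2$, $w=4u^3$ would invalidate the lemma. I would double-check this sum by exact rational arithmetic, as is done for the other identities in Appendix~\ref{app:checks}. As an independent check, one can note that this curve is exactly where the cumulant expansion of $\log\EE\exp(\cdot)$ has vanishing $s^2$ and $s^4$ terms, and verify that its $s^6$ term is a nonzero multiple of $u^6$.
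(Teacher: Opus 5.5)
Your proof is correct and follows the paper's own argument: solve $c_1=0$ for $v=-\tfrac52u^2$, factor $c_2=105u(w-4u^3)$, and split on $u$, getting $c_3=14490u^6\ne0$ when $u\ne0$ and $c_3=\tfrac{945}2w^2$ when $u=0$. Your seven numerators over $16$ and their sum $231840$ are also correct.
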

\begin{proof}
$c_1=0$ gives $v=-\frac52u^2$, and then $c_2=105u(w-4u^3)$. If $u\ne0$ then $w=4u^3$ and $c_3=14490u^6\ne0$. If $u=0$ then
$v=0$ and $c_3=\frac{945}2w^2$, so $w=0$.
\end{proof}

\subsection{The pole-order lemma}
Work over $\C[[\tau]]$ with valuation $\ord$. Consider a normalized formal stationary-phase expansion in two variables
whose phase and amplitude jets are regular, whose Hessian has rank one at $\tau=0$ and is nonsingular over
$\C((\tau))$, with determinant of order $e>0$.
\begin{lemma}\label{lem:poles}
If $\ell_1$ is regular at $\tau=0$ and $\ell_2=\ell_3=0$, then $\ord\ell_4\ge-6e$.
\end{lemma}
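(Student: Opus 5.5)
We diagonalize the degenerate Hessian so that the whole pole structure is carried by one rescaled variable, and then Lemma~\ref{lem:moments} turns the vanishing of $\ell_2,\ell_3$ into a statement about three cubic/quartic/quintic invariants.

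\emph{Normal form.} Since the Hessian $H(\tau)$ has rank one at $\tau=0$ and $\det H$ has order $e$, choose a regular change of integration variables over $\C[[\tau]]$ (an orthonormal-type frame adapted to $\ker H(0)$, then completion of the square in the nondegenerate direction) so that
\[
H\sim\operatorname{diag}(1,\delta),\qquad \ord\delta=e .
\]
Integrating out the nondegenerate coordinate by the usual Wick expansion changes only regular quantities. The result is a one-variable formal integral in the degenerate coordinate $z$, with Hessian $\delta$ and regular jets $a_k z^k$ ($k\ge3$) and regular amplitude corrections.

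Rescale $z=Z/\sqrt{\delta}$, so that $Z$ is a standard Gaussian. A term $\varepsilon^{(k-2)/2}a_kz^k$ becomes $a_k\delta^{-k/2}\varepsilon^{(k-2)/2}Z^k$. Put $s=\varepsilon^{1/2}\delta^{-1/2}$, i.e.\ assign weight $\ord\delta^{-1/2}=-e/2$ to each half power of $\varepsilon$. Then the cubic, quartic and quintic vertices read
\[
u\,sZ^3,\qquad v\,s^2Z^4,\qquad w\,s^3Z^5,
\]
where $u=a_3\delta^{-1/2}$, $v=a_4$, $w=a_5\delta^{1/2}$ up to regular factors. More precisely, every vertex $Z^k\varepsilon^{(k-2)/2+\delta_v}$ carries $\delta^{-k/2}$. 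The most singular contribution to $\ell_j$ is the coefficient of $s^{2j}$ in $\log\EE\exp(\dots)$, with $\ord\ge -je$ from the propagators together with the extra $\delta^{-1/2}$ factors attached to the cubic vertices.

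\emph{Order bookkeeping.} Write $\ell_j=\delta^{-3j}(\text{regular})$; this is the crude bound from at most $3j$ propagators. The leading singular part of $\ell_j$ is governed by cumulants built from the leading coefficients $u_0,v_0,w_0$ of the rescaled jets. Concretely, set $\alpha=\ord a_3$ and define the normalized leading parts by pulling out the natural powers of $\delta$. The claim to establish is
\[
\ord\ell_1\ge -e\ \text{with equality unless}\ c_1^{(0)}=0,
\]
and similarly for $\ell_2,\ell_3$ with $c_2^{(0)},c_3^{(0)}$. Here $c_i^{(0)}$ are the polynomials of Lemma~\ref{lem:moments}, evaluated at the leading coefficients of $(a_3\delta^{-1/2},a_4,a_5\delta^{1/2})$ after a uniform valuation rescaling. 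The hypotheses (regular $\ell_1$, $\ell_2=\ell_3=0$) then force the corresponding leading cumulants to vanish. Since cumulants and moments agree when the lower ones vanish, $c_1=c_2=c_3=0$ at leading order. Lemma~\ref{lem:moments} gives $u_0=v_0=w_0=0$, i.e.\ the jets vanish to higher order along $\tau$.

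\emph{Iteration and conclusion.} With these improved orders of $a_3,a_4,a_5$ we recount the worst graph in $\ell_4$. Each cubic vertex now contributes at most $\delta^{-1/2}\cdot\delta^{1/2}$-compensated weight, and quartic and quintic vertices are similarly improved. The surviving singularity comes only from propagators in graphs with vertices of valence $\ge6$ or amplitude vertices, giving $\ord\ell_4\ge-6e$ via the Euler-type relation $E=j+V-\sum\delta_v$ used in Lemma~\ref{lem:worder}.

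\emph{Main obstacle.} The main obstacle is making the leading-order reduction rigorous: showing that a lower-order vanishing does not merely shift which terms dominate. We must choose the rescaling weight, possibly by a Newton-polygon argument in $(\ord a_3,\ord a_4,\ord a_5)$, so that the leading parts of $\ell_1,\ell_2,\ell_3$ are exactly $c_1,c_2,c_3$ of a single weighted-homogeneous triple. I would first handle this by a case split on which of $\ord a_3+\tfrac{e}{2}$, $\ord a_4$, $\ord a_5-\tfrac e2$ realizes the minimal normalized order.
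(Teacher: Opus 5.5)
\textbf{There is a genuine gap.} Your skeleton is the right one: reduce to one variable with Hessian $\delta$ of order $e$, rescale to a weighted-homogeneous model, and apply Lemma~\ref{lem:moments}. But the step that actually proves the lemma is the one you leave as the ``main obstacle'', and the bookkeeping you put in its place is wrong.
\begin{itemize}
\item With $s=\varepsilon^{1/2}\delta^{-1/2}$, every vertex becomes $a_k\delta^{-1}s^{k-2}Z^k$. Your $u,v,w$ correspond instead to $s=\varepsilon^{1/2}\delta^{-1}$.
\item The claim $\ord\ell_1\ge-e$ is false: a unit cubic coefficient already gives a term of order $-3e$ in $\ell_1$.
\item The quantities $\ord a_3+\frac e2$, $\ord a_4$, $\ord a_5-\frac e2$ do not decide which vertices lead. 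The three vertices carry the different powers $s,s^2,s^3$, so no fixed power of $\delta$ can serve as the weight; the weight has to be read off from the orders of the $a_k$.
\item The ``iteration'' is not an argument. Vanishing of leading parts at a fixed scale only moves the problem to a finer scale. There, vertices of degree $\ge6$ or amplitude terms may enter the leading model, where Lemma~\ref{lem:moments} says nothing. You never say where the process stops, or why it stops at $6e$.
\end{itemize}

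The missing idea is to measure each vertex by its pole per unit of $\varepsilon$.
\begin{itemize}
\item A phase vertex of degree $k$ contributes at most $M_k=(ke-2\ord a_k)/(k-2)$.
\item An amplitude term contributes at most $e$.
\item For $k\ge6$ one has $M_k\le\frac32e$, because $\ord a_k\ge0$.
\end{itemize}
Put $M=\max\{\frac32e,M_3,M_4,M_5\}$. Pairing half-edges shows that every term of $\ell_j$ has valuation at least $-jM$.

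Now suppose $M>\frac32e$. At valuation exactly $-jM$, only the vertices of degree $3,4,5$ with $M_k=M$ survive. After a ramified extension, take $s$ of valuation $-M/2$. The leading parts of $\ell_1,\ell_2,\ell_3$ are then the first three coefficients of $\log\EE\exp(usZ^3+vs^2Z^4+ws^3Z^5)$, and $(u,v,w)\ne0$ by the very choice of $M$. Since $M>0$, the hypotheses force these leading parts to vanish, hence $c_1=c_2=c_3=0$, contradicting Lemma~\ref{lem:moments}.

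So $M\le\frac32e$, and $\ord\ell_4\ge-4M\ge-6e$. This is a one-step contradiction, not an iteration. The threshold $\frac32e$ coming from the vertices of degree $\ge6$ is exactly what produces the constant $6$.
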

\begin{proof}
A linear change over $\C[[\tau]]$ makes one quadratic direction a unit; the formal Morse lemma in that direction (only
division by a unit is needed) and Gaussian integration in it leave a one-variable expansion with regular jets and
quadratic coefficient $h$ of order $e$. After scaling by $\sqrt\varepsilon$ the phase vertices are
$a_k(\tau)\varepsilon^{(k-2)/2}z^k$ ($k\ge3$, $a_k$ regular), with $\EE_hz^{2n}=(2n-1)!!\,h^{-n}$, and the amplitude
terms are $b_{q,k}\varepsilon^{q+k/2}z^k$ ($q,k\ge0$, not both $0$), each contributing a pole of at most $e$ per unit
of $\varepsilon$. A phase vertex of degree $k$ contributes at most $M_k=(ke-2\ord a_k)/(k-2)$ per unit of $\varepsilon$,
and $M_k\le\frac32e$ for $k\ge6$. With $M=\max\{\frac32e,M_3,M_4,M_5\}$, pairing Gaussian half-edges shows that every
contribution to $\ell_j$ has valuation at least $-jM$, and taking the logarithm preserves this.
Suppose $M>\frac32e$. After a finite ramified extension, at valuation exactly $-jM$ only the vertices of degree $3,4,5$
attaining $M$ contribute, with leading coefficients $u,v,w$ not all zero, and the leading model is exactly
$\EE\exp(usZ^3+vs^2Z^4+ws^3Z^5)$. Since $M,2M,3M>0$, regularity of $\ell_1,\ell_2,\ell_3$ forces the first three
coefficients of its logarithm, hence $c_1,c_2,c_3$, to vanish, contradicting Lemma~\ref{lem:moments}. So
$M\le\frac32e$ and $\ord\ell_4\ge-4M\ge-6e$.
\end{proof}

\subsection{Reciprocity}
Put $p=(a,b,d,m,B,C)$, $p^*=\bigl(\frac d\Delta,-\frac b\Delta,\frac a\Delta,m,\frac{dB-bC}\Delta,\frac{aC-mbB}\Delta\bigr)$
and $K_0=\frac{1+m}{24}+\frac{mdB^2-2mbBC+aC^2}{2m\Delta}$.
\begin{proposition}\label{prop:reciprocity}
As rational-function identities in $a,b,d,m,B,C,X,Y$:
$D(p^*;X^{-1},Y^{-1})=D(p;X,Y)/(\Delta XY)$, $\lambda_1(p;X,Y)+\lambda_1(p^*;X^{-1},Y^{-1})=K_0$, and
$\ell_j(p;X,Y)=(-1)^j\ell_j(p^*;X^{-1},Y^{-1})$ for $j=2,3,4$.
\end{proposition}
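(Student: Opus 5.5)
The plan is to prove the three identities by exact symbolic computation, organized by a conceptual reason that fixes in advance what each identity must look like. The conceptual reason is the duality $A\leftrightarrow A^{-1}$ already used for the known examples. The substitution $X\mapsto X^{-1}$ is $x\mapsto 1-x$, since $1/X=(1-x)/x$. The Nahm equations of $A$ and of $p^*$ correspond under $(x,y)\mapsto(1-x,1-y)$. With symmetrizer $\operatorname{diag}(1,m)$, the matrix $AD$ inverts to $(AD)^{-1}$, which is the $p^*$ data times $D^{-1}$. The linear term transforms as $\mathbf b\mapsto A^{-1}\mathbf b$, which gives the shifts $\bigl(\frac{dB-bC}{\Delta},\frac{aC-mbB}{\Delta}\bigr)$.

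At the level of the formal Gaussian integral \cite[(17)]{Miz}, I would argue as follows. Each vertex series $\psi_{z}(\cdot,\varepsilon)$ built from $\Li_{2-r}$ and Bernoulli polynomials satisfies an inversion relation under $z\mapsto 1-z$ together with $\varepsilon\mapsto-\varepsilon$; this is the formal shadow of the Fourier self-duality of the quantum dilogarithm. One would then take a formal Fourier transform in the two Gaussian variables, which replaces the quadratic form $\widetilde H$ by its inverse up to the diagonal terms. The outcome should be that $\sum_j\ell_j(p)\varepsilon^j$ and $\sum_j\ell_j(p^*)(-\varepsilon)^j$ differ only by the linear term. That predicts the sign $(-1)^j$ for $j\ge2$. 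It also predicts that for $j=1$ the relation becomes a sum, $\lambda_1(p)+\lambda_1(p^*)=K_0$, with constant $K_0$. In $K_0$, the $\frac{1+m}{24}$ comes from the two eta-type factors $e^{-\varepsilon/24}$ and $e^{-m\varepsilon/24}$, and the quadratic form $\frac12\mathbf b^{\mathsf T}(AD)^{-1}\mathbf b$ comes from completing the square in the linear term.

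I would not rely on this heuristic for rigor. Instead, the steps are the following.
\begin{enumerate}
\item Verify $D(p^*;X^{-1},Y^{-1})=D(p;X,Y)/(\Delta XY)$ by hand. This is a two-line expansion of $(d/\Delta+X^{-1})(a/\Delta+Y^{-1})m-m^2b^2/\Delta^2$.
\item Verify the $\lambda_1$ identity directly from \eqref{eq:R1}. Substitute $X\mapsto X^{-1}$, $Y\mapsto Y^{-1}$ and $p\mapsto p^*$ into $R_1/D^3$. Use step 1 to rewrite the denominator, multiply through by $(\Delta XY)^3D^3$, and check that the resulting polynomial identity holds with all six parameters symbolic. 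Since $R_1$ has bidegree $(3,3)$, this is a small computation.
\item For $j=2,3,4$, write $\ell_j=\Psi_j/D^{3j}$. By step 1 the claim becomes a polynomial identity:
\[
\Psi_j(p;X,Y)\,(\Delta XY)^{3j}\,\Delta^{N_j}=(-1)^j\,(XY)^{3j}\,\Delta^{N_j}\,\Psi_j(p^*;X^{-1},Y^{-1}),
\]
where $\Delta^{N_j}$ clears the denominators coming from $p^*$. Because $\operatorname{supp}\Psi_j\subseteq P_{3j}$ (Proposition~\ref{prop:newt}), the factor $(XY)^{3j}\Psi_j(p^*;X^{-1},Y^{-1})$ is again a polynomial supported in $P_{3j}$. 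So it suffices to compare coefficients monomial by monomial.
\end{enumerate}

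As a consistency check before the full computation, I would compare edges and corners. The reflection $X\mapsto X^{-1}$ exchanges the bottom edge of $\Psi_j(p)$ with the top edge of $\Psi_j(p^*)$. The effective pair $(A_{\rm b},\beta_{\rm b})=(\Delta/d,B-bC/d)$ should then correspond to the pair $(a^*,B^*)$ after the rank-one inversion $A\mapsto A^{-1}$, $\beta\mapsto\beta/A$. At the level of corners this is the Bernoulli reflection $B_n(1-u)=(-1)^nB_n(u)$, applied to the arguments $x_1,\dots,x_8$ of Proposition~\ref{prop:corners}. This should reproduce the sign $(-1)^j$, because $B_{j+1}$ appears with the prefactor $(-1)^{j+1}$.

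The main obstacle is the size of $\Psi_4$. It is a polynomial of bidegree $(12,12)$ whose coefficients are polynomials of substantial degree in six parameters, and substituting $p^*$ introduces powers of $\Delta$. My first attempt would be to reduce the parameters. The homogeneity behind Lemma~\ref{lem:swap} (rescaling $\varepsilon$) and the affine dependence on $(B,C)$ (degree at most $2j$ in the leaf $\mathbf v$) let one treat $B,C$ symbolically while the matrix entries are handled by evaluation. The remaining dependence on $(a,b,d,m)$ would be certified by exact evaluation on a grid exceeding the a priori degree bounds, as was done for Proposition~\ref{prop:rk1corners}. I would also cross-check by computing $\ell_4$ both with \texttt{formalells} and with the unshifted moment recurrence.
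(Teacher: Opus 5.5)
Your Step~3 puts the clearing factor on the wrong side, so the identity you propose to check is false. By Step~1, $D(p^*;X^{-1},Y^{-1})^{3j}=D(p;X,Y)^{3j}/(\Delta XY)^{3j}$. Hence the claim $\ell_j(p;X,Y)=(-1)^j\ell_j(p^*;X^{-1},Y^{-1})$ is equivalent to
\[
\Psi_j(p;X,Y)=(-1)^j\,\Delta^{3j}(XY)^{3j}\,\Psi_j(p^*;X^{-1},Y^{-1}).
\]
Your display instead reduces, after cancelling $\Delta^{N_j}$ and $(XY)^{3j}$, to $\Delta^{3j}\Psi_j(p;X,Y)=(-1)^j\Psi_j(p^*;X^{-1},Y^{-1})$. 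That equates a polynomial supported in $P_{3j}$ with a polynomial in $X^{-1},Y^{-1}$. Since $(0,0)\notin P_{3j}$, it would force $\Psi_j\equiv0$, so the monomial comparison you describe would fail. Your remark that $(XY)^{3j}\Psi_j(p^*;X^{-1},Y^{-1})$ is supported in $P_{3j}$ shows you meant to compare it with $\Psi_j(p;X,Y)$ itself. Once corrected and multiplied by $(-1)^j\Delta^{3j}$, the identity is exactly the paper's: $\mathcal T_j-(-1)^j\Delta^{3j}\Psi_j(p;X,Y)=0$, with $\mathcal T_j=\Delta^{6j}(XY)^{3j}\Psi_j(p^*;X^{-1},Y^{-1})$.

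With this repaired, your plan is the paper's proof: exact verification of the cleared identities, with the duality heuristic serving only as motivation. The remaining difference is $j=4$, where the paper needs no evaluation. The map $p\mapsto p^*$ fixes $m$ and sends each of $a,b,d,B,C$ to a polynomial divided by $\Delta$. So on the part of $\Psi_j$ homogeneous of degree $q$ in $(a,b,d,B,C)$, it acts as the polynomial substitution $(a,b,d,B,C)\mapsto(d,-b,a,dB-bC,aC-mbB)$ followed by division by $\Delta^q$. These degrees are at most $6j$, which fixes the clearing exponent, and the inversion in $X,Y$ is a coefficient reversal on $P_{3j}$. The check therefore stays polynomial and is done with all six parameters symbolic; the size of $\Psi_4$ is not an obstacle.

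Your grid alternative is legitimate only after you prove a priori degree bounds, in each evaluated variable, for both sides of the cleared identity. The $q$-degree bounds $(a,b,d)$, but your plan gives no bound in $m$, so as stated the grid is evidence rather than a certificate.

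One small correction to your consistency check: the corner arguments match directly, since $\beta/A+\frac1{2A}=(\beta+\frac12)/A$, so no Bernoulli reflection is involved. The sign $(-1)^j$ comes from comparing the prefactors $(-1)^{j+1}$ and $-A^{3j}$ of Proposition~\ref{prop:rk1corners}.
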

\begin{proof}
The first identity is immediate. For the others, the numerators $N_1=R_1$ and $N_j=\Psi_j$ are generated by the
Gaussian recurrence followed by the logarithm recurrence (script \texttt{pole/recip\_cert.gp}), with all parameters
symbolic; they agree with the numerators of Section~\ref{sec:structure}. Splitting $N_j$ into homogeneous parts of
degree $q$ in $(a,b,d,B,C)$, the transform $p\mapsto p^*$ multiplies the degree-$q$ part by $\Delta^{-q}$ after the
polynomial substitution $(a,b,d,B,C)\mapsto(d,-b,a,dB-bC,aC-mbB)$, and $X^{3j}Y^{3j}N_j(X^{-1},Y^{-1})$ is a coefficient
reversal. The cleared differences
$24m\bigl(\mathcal T_1+\Delta^4N_1\bigr)-D^3\Delta^3\bigl(12(mdB^2-2mbBC+aC^2)+m\Delta(1+m)\bigr)$ and
$\mathcal T_j-(-1)^j\Delta^{3j}N_j$ ($j=2,3,4$), where $\mathcal T_j=\Delta^{L_j}(XY)^{3j}N_j(p^*;X^{-1},Y^{-1})$ with
$L_1=7$, $L_j=6j$, are identically zero (\texttt{pole/recip\_check.py}; the $q$-degrees are $7,12,18,24$).
\end{proof}

\subsection{Sixfold cancellation}
Bars denote bihomogenization on $\mathbb P^1\times\mathbb P^1$; $\bar\Psi_4$ has bidegree $(12,12)$ and $\bar D$
bidegree $(1,1)$.
\begin{theorem}\label{thm:six}
Let $b\ne0$, and let $G$ be a reduced divisor of $f_r$ with $G\mid\Psi_2$ and $G\mid\Psi_3$, of bidegree $(w,h)$, and
$\nu:\widetilde{\mathcal C}\to\mathcal C$ the normalization of the closure of $G=0$. Then $\nu^*\bar\Psi_4/(\nu^*\bar D)^6$
is a regular section of $\nu^*\mathcal O(6,6)$. If $G$ and $\Psi_4$ have no common component, there are at most
$6(w+h)$ points of $G=\Psi_4=0$ with $D\ne0$.
\end{theorem}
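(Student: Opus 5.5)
We show that $\bar\Psi_4/\bar D^6$ has no pole on $\widetilde{\mathcal C}$. The required pole bound comes from Lemma~\ref{lem:poles} at each point of $\widetilde{\mathcal C}$ over the conic $D=0$, applied in the affine chart containing that point; Proposition~\ref{prop:reciprocity} transfers the argument to points at infinity. The count is then a degree computation.

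\emph{Step 1: local bound over the conic.} Take a point $\tilde p\in\widetilde{\mathcal C}$ with $\nu(\tilde p)$ in the affine chart and $D(\nu(\tilde p))=0$. Choose a local uniformizer $\tau$ at $\tilde p$ and pull back the formal Gaussian integral of \cite[(17)]{Miz} along the branch $\tau\mapsto(X(\tau),Y(\tau))$. Its phase and amplitude jets (Bernoulli vertices, $\Li_{2-r}$ values, shifts) are polynomial in $X,Y$, hence regular in $\tau$. The Hessian is $\widetilde HD$, of rank one at $\tau=0$, since $\widetilde H\neq0$ there. Because $\gcd(f_r,D)=1$ (Lemma~\ref{lem:first}(b)), $D$ does not vanish identically on $G$, so $\det$ has finite order $e=\ord_\tau\nu^*D>0$. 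On $G$ we have $\lambda_1=r$, which is regular, and $\Psi_2=\Psi_3=0$, so $\ell_2=\ell_3=0$. Lemma~\ref{lem:poles} gives $\ord\ell_4\ge-6e$. Since $\Psi_4=D^{12}\ell_4$, this yields $\ord_{\tilde p}\nu^*\Psi_4\ge 6e=\ord_{\tilde p}(\nu^*D)^6$. The constant factors $m,\Delta$ in the normalization are harmless, because $b\neq0$ and $\Delta>0$.

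\emph{Step 2: points at infinity.} For branches through $X$ or $Y\in\{0,\infty\}$ in $\mathbb P^1\times\mathbb P^1$, use Proposition~\ref{prop:reciprocity}. It gives $\ell_4(p;X,Y)=\ell_4(p^*;X^{-1},Y^{-1})$, $D(p^*;X^{-1},Y^{-1})=D/(\Delta XY)$, and $\lambda_1$ changes by the constant $K_0$. Hence on the image curve for $p^*$ the same hypotheses hold: $\lambda_1$ is constant, and $\ell_2=\ell_3=0$ by the sign relation. Step 1 for $p^*$, which again has $b^*\neq0$ and $\Delta^*=1/\Delta>0$, covers the points with $X,Y$ at infinity. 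For mixed charts (one coordinate finite, one infinite), either use the chart $(X,Y^{-1})$ directly, since the bihomogenized $\bar D,\bar\Psi_4$ are defined there, or note that the only points of $\bar D=0$ at infinity are $(\infty,\infty)$, $(\infty,-d)$ and $(-a,\infty)$. Away from $\bar D=0$ nothing needs checking, because $\bar\Psi_4$ is a regular section of $\mathcal O(12,12)$. This gives regularity of $\nu^*\bar\Psi_4/(\nu^*\bar D)^6$ as a section of $\nu^*\mathcal O(12-6,12-6)=\nu^*\mathcal O(6,6)$.

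\emph{Step 3: count.} If $G$ and $\Psi_4$ share no component, the section is nonzero on each component of $\widetilde{\mathcal C}$. Its number of zeros is at most $\deg\nu^*\mathcal O(6,6)=6h+6w$, since $\mathcal O(1,0)$ has degree $h$ and $\mathcal O(0,1)$ has degree $w$ on a curve of bidegree $(w,h)$. Zeros of $\Psi_4$ at points where $D\neq0$ are zeros of this section, which proves the bound $6(w+h)$.

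The main obstacle is Step 1: checking that the pulled-back integral satisfies the hypotheses of Lemma~\ref{lem:poles}. In particular, the Hessian must be $\widetilde HD$ with $\widetilde H$ regular, and the rank must be exactly one at every point of the conic, including singular points of $G$ and points where $\widetilde H$ might degenerate further. I would first check that $\widetilde H$ has $(1,2)$-entry $mb\neq0$, so it never vanishes entirely. This gives rank exactly one whenever $D=0$, and regularity of the ramified branches is handled by working on the normalization.
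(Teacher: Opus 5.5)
Your Steps 1 and 3 match the paper's proof. Step 1 applies Lemma~\ref{lem:poles} at finite points of the conic, using $mb\ne0$ for rank one and $D\not\equiv0$ on $G$. Step 3 counts zeros on the normalization with $\deg\nu^*\mathcal O(6,6)=6(w+h)$. The gap is in Step 2, at exactly the points that need an argument.

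The conic $\bar D=m(aX_0+X_1)(dY_0+Y_1)-m^2b^2X_0Y_0$ meets the boundary only at $(\infty,-d)$ and $(-a,\infty)$. At $(\infty,\infty)$ one has $\bar D=mX_1Y_1\ne0$, so that point is not on it. Your reciprocity argument for ``points with $X,Y$ at infinity'' is therefore vacuous. The two points that matter are the mixed ones, and for them you propose to ``use the chart $(X,Y^{-1})$ directly''. That step fails. Lemma~\ref{lem:poles} requires a stationary-phase model with regular jets, but near $Y=\infty$ the entry $m(d+Y)$ of $\widetilde H$ and the cubic vertex $m^2Y(1+Y)$ have poles. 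That $\bar D$ and $\bar\Psi_4$ are polynomials in that chart gives no bound on $\ord\ell_4$. There is also no reciprocity that inverts only one coordinate.

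The missing observation is that the full inversion already handles the mixed points. The points $(\infty,-d)\mapsto(0,-1/d)$ and $(-a,\infty)\mapsto(-1/a,0)$ are finite points of the dual conic because $a,d>0$. The dual off-diagonal entry $-mb/\Delta$ is nonzero, so rank one persists there. In the chart $u=1/X$ near $(\infty,-d)$ one has
\[
\bar\Psi_4=u^{12}\Psi_4=\bar D^{12}\,\ell_4(p^*;u,Y^{-1}),\qquad \bar D=uD=\Delta Y\,D(p^*;u,Y^{-1}),
\]
and $\Delta Y$ is a unit because $Y\to-d\ne0$. On the branch the dual data satisfy $\lambda_1\equiv K_0-r$ and $\ell_2=\ell_3=0$. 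Lemma~\ref{lem:poles} then gives $\ord\ell_4(p^*)\ge-6\ord\bar D$, hence $\ord\bar\Psi_4\ge12\ord\bar D-6\ord\bar D=6\ord\bar D$. The point $(-a,\infty)$ is symmetric. With this replacement for Step 2, your proof coincides with the paper's.
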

\begin{proof}
At a finite point of $G$ with $D=0$, $H$ has rank one ($mb\ne0$), all jets are polynomial in $X,Y$, and on the branch
$\lambda_1=r$ and $\ell_2=\ell_3=0$; $D$ does not vanish identically on the branch (Section~\ref{sec:count}). By
Lemma~\ref{lem:poles}, $\ord\Psi_4=12\ord D+\ord\ell_4\ge6\ord D$. The points at infinity of the conic are
$(\infty,-d)$ and $(-a,\infty)$, and $(\infty,\infty)$ is not on it. Since $a,d>0$, the map $(X,Y)\mapsto(X^{-1},Y^{-1})$
sends them to finite points of the conic of $p^*$, whose off-diagonal entry is still nonzero, and by
Proposition~\ref{prop:reciprocity} the dual data again have $\lambda_1$ constant and $\ell_2=\ell_3=0$. In the chart
$u=1/X$ near $(\infty,-d)$ one has $\bar D=uD$, $\bar\Psi_4=u^{12}\Psi_4$ and $D(p^*;u,Y^{-1})=\bar D/(\Delta Y)$ with
$\Delta Y$ a unit, so Lemma~\ref{lem:poles} for the dual data gives $\ord\bar\Psi_4\ge6\ord\bar D$; the other chart is
the same. Hence the quotient has no pole above $\bar D=0$, it is a section of $\nu^*\mathcal O(6,6)$, and it is nonzero
on each component when $G$ and $\Psi_4$ share none, so it has at most $6(w+h)$ zeros; every point of $G=\Psi_4=0$ with
$D\ne0$ is one of them.
\end{proof}

\begin{proposition}\label{prop:decoupled}
If $b=0$, $a,d>0$ and $a,d\ne1$, then $f_r$ and $\Psi_2$ have no common component.
\end{proposition}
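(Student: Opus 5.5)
The approach is to exploit that for $b=0$ everything decouples: $\widetilde H$ is diagonal, $D=m(a+X)(d+Y)$, and the formal integral factors into two rank-one integrals. So
\[
\lambda_1=\lambda^{(1)}_a(B;X)+m\,\lambda^{(1)}_d(C/m;Y)+\text{const},\qquad \ell_j=\lambda^{(j)}_a(B;X)+m^j\lambda^{(j)}_d(C/m;Y).
\]
Clearing denominators gives
\[
f_r=m^3\bigl[(d+Y)^3N_1(a,B;X)+m(a+X)^3N_1(d,C/m;Y)-\rho(a+X)^3(d+Y)^3\bigr]
\]
for a suitable constant $\rho$. Similarly,
\[
\Psi_2=m^{6}\bigl[(d+Y)^6N_2(a,B;X)+m^2(a+X)^6N_2(d,C/m;Y)\bigr].
\]
These are consistent with the edge formulas of Proposition~\ref{prop:edges} at $b=0$ and can be read off from \eqref{eq:R1}.

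Suppose $\Gamma$ is an irreducible common factor. I would first use Lemma~\ref{lem:width}, which applies since $a,d\ne1$, to get $\deg_X\Gamma,\deg_Y\Gamma\le2$. The key step is to restrict to the lines $X=-a$ and $Y=-d$.

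On $X=-a$ we have $f_r(-a,Y)=m^3(d+Y)^3N_1(a,B;-a)$, and $N_1(a,B;-a)=\frac5{24}a^2(a-1)^2\ne0$ by Lemma~\ref{lem:first}(c) and the proof of Lemma~\ref{lem:edgediv}. Also $\Psi_2(-a,Y)=m^6(d+Y)^6N_2(a,B;-a)$. Hence $\Gamma(-a,Y)$ is, up to a constant, a power of $(d+Y)$. Symmetrically, $\Gamma(X,-d)$ is a constant times a power of $(a+X)$.

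Next I would use the factored form. Put $s=a+X$ and $t=d+Y$. Modulo $s^3$, $f_r\equiv m^3t^3N_1(a,B;X)$, and the polynomial $N_1(a,B;X)$ is a unit at $s=0$. So near the line $s=0$ the intersection of $\Gamma$ with that line lies only at $t=0$. The same holds near $t=0$ with the roles of $s$ and $t$ exchanged.

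Then I analyse the local structure at the point $s=t=0$, that is, $(X,Y)=(-a,-d)$. There $f_r$ has leading form
\[
\alpha t^3+\beta s^3-\rho s^3t^3+\cdots,\qquad \alpha=m^3N_1(a,B;-a),\quad \beta=m^4N_1(d,C/m;-d),
\]
with $\alpha\beta\ne0$. The corresponding form of $\Psi_2$ is $\gamma t^6+\delta s^6+\cdots$, with $\gamma,\delta$ given by the values of $N_2$ at $-a$ and $-d$. Every branch of $\Gamma$ through that point has tangent direction $t^3=-(\beta/\alpha)s^3$. A common branch must therefore also satisfy $t^6=-(\delta/\gamma)s^6$, which gives the relation $(\beta/\alpha)^2=-\delta/\gamma$.

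I would try to contradict this relation using the explicit values. Here $N_1(A,\beta;-A)=\frac5{24}A^2(A-1)^2$, and $N_2(A,\beta;-A)$ is a pure function of $A$ alone by Theorem~\ref{thm:conic}: it equals $\frac5{16}A^4(A-1)^4$ up to sign. Substituting, the left side is a positive square and the right side has a fixed sign. I expect the signs to disagree, which gives the contradiction. This sign bookkeeping, including the powers of $m$, is the main obstacle. If the signs do not disagree, I would fall back on a Newton-polygon argument for $\Gamma$ in the $(s,t)$ coordinates using $\Psi_3$.

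Finally I must exclude $\Gamma$ avoiding $(-a,-d)$. By the restrictions above, such a $\Gamma$ would miss both lines $s=0$ and $t=0$ entirely in the affine plane. Since $\Gamma$ is irreducible and not a unit, it must then be $s^i$-free and $t$-free as a polynomial in $s$ and $t$. Its bidegree is at most $(2,2)$, so the only way to avoid those lines is for $\Gamma$ to be a constant, which is a contradiction.
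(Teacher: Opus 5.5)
Your local analysis at $(-a,-d)$ is correct. It is the paper's argument in polynomial form: the paper expands $\lambda_1=L_{a,B}(X)+mL_{d,C/m}(Y)$ and $\ell_2=T_{a,B}(X)+m^2T_{d,C/m}(Y)$ along a branch, and your tangent cones $\alpha t^3+\beta s^3$ and $\gamma t^6+\delta s^6$ are the numerators of those leading poles.

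The sign question you flag is not an obstacle, because only ratios enter. With $c_A=\frac5{24}A^2(A-1)^2$ one has $\alpha=m^3c_a$, $\beta=m^4c_d$ and $N_2(A,\beta;-A)=\frac{36}5c_A^2$. Hence $\gamma/\alpha^2=\delta/\beta^2=\frac{36}5$, and your relation $(\beta/\alpha)^2=-\delta/\gamma$ becomes $\delta/\gamma=-\delta/\gamma$, which is impossible since $\delta\ne0$. Any single formula $N_2(A,\beta;-A)=\kappa A^4(A-1)^4$ already gives $\delta/\gamma>0$, so your expectation is right. However, Theorem~\ref{thm:conic} is a statement for $b\ne0$ on the rank-two conic, so it does not directly supply this rank-one value. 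The paper uses the rank-one expansion $T_{A,\beta}(Z)=\frac{36}5c_A^2(Z+A)^{-6}+\dots$ instead.

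The genuine gap is your last step. Missing both affine lines $X=-a$ and $Y=-d$ does not force $\Gamma$ to be constant, whatever its bidegree. For example, $1+(X+a)(Y+d)$ is irreducible of bidegree $(1,1)$ and meets these lines only at $(-a,\infty)$ and $(\infty,-d)$ in $\mathbb P^1\times\mathbb P^1$. The bound from Lemma~\ref{lem:width} plays no role here.

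What you need is that $f_r$ itself does not pass through those two points at infinity, and this follows from your own restriction:
\begin{itemize}
\item $f_r(-a,Y)=m^3c_a(d+Y)^3$ has degree $3=\deg_Yf_r$. So from $f_r=\Gamma Q$ you get $\deg_Y\Gamma(-a,Y)=\deg_Y\Gamma$, and $\Gamma(-a,Y)$ is a constant times $(d+Y)^{\deg_Y\Gamma}$.
\item Hence if $\deg_Y\Gamma\ge1$, then $\Gamma(-a,-d)=0$.
\item Symmetrically, $f_r(X,-d)=m^4c_d(a+X)^3$ has full $X$-degree, so $\deg_X\Gamma\ge1$ also gives $\Gamma(-a,-d)=0$.
\end{itemize}
Thus every nonconstant common factor passes through $(-a,-d)$, and your tangent-cone argument finishes the proof. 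This is the role of the paper's remark that $L_{A,\beta}$ and $T_{A,\beta}$ are regular at infinity. On the projective curve a nonconstant coordinate $X$ takes the value $-a$, and $\lambda_1=r$ then forces $Y\to-d$ rather than $Y\to\infty$.
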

\begin{proof}
For $b=0$, $\lambda_1=L_{a,B}(X)+mL_{d,C/m}(Y)$ and $\ell_2=T_{a,B}(X)+m^2T_{d,C/m}(Y)$ with the rank-one first and
second logarithmic functions $L_{A,\beta}$, $T_{A,\beta}$, which are regular at infinity and satisfy, with
$c_A=\frac5{24}A^2(A-1)^2\ne0$,
\[
 L_{A,\beta}(Z)=\frac{c_A}{(Z+A)^3}+O\bigl((Z+A)^{-2}\bigr),\qquad
 T_{A,\beta}(Z)=\frac{36}{5}\,\frac{c_A^2}{(Z+A)^6}+O\bigl((Z+A)^{-5}\bigr).
\]
On a common component some coordinate, say $X$, is nonconstant; at a point above $X=-a$ the first equation
$\lambda_1=r$ forces $Y=-d$ with the same order $e$, and leading coefficients $\alpha+\beta=0$, $\alpha\ne0$. The second
equation then has leading coefficient $\frac{36}5(\alpha^2+\beta^2)=\frac{72}5\alpha^2\ne0$ at order $-6e$, a
contradiction. Components inside a determinant line are excluded since $c_a,c_d\ne0$.
\end{proof}
When $b=0$ and $a=1$ or $d=1$, the proof of Theorem~\ref{thm:main} gives $[K:\Q]\le3$ directly.

\section{The count: proof of Theorem~\ref{thm:main}}\label{sec:count}
\begin{lemma}[the Euler exception]\label{lem:euler}
Let $m\ge1$. Then $\Psi_2\equiv0$ if and only if $(a,b,d)=(1,0,1)$, $B\in\{0,\pm\frac12\}$ and
$C/m\in\{0,\pm\frac12\}$. In that case $x=y=\frac12$ and $K=\Q$.
\end{lemma}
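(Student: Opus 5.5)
The plan is to read off the vanishing of $\Psi_2$ from its four boundary edges, using Proposition~\ref{prop:edges} and Theorem~\ref{thm:rk1}, and then to check the converse directly in the decoupled case.

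\emph{Necessity.} Suppose $\Psi_2\equiv0$. Then each edge polynomial in Proposition~\ref{prop:edges} vanishes identically. The prefactors are nonzero because $m,a,d>0$, so we get
\[
N_2(a,B;\cdot)\equiv0,\quad N_2(d,C/m;\cdot)\equiv0,\quad N_2(A_{\rm b},\beta_{\rm b};\cdot)\equiv0,\quad N_2(A_{\rm l},\beta_{\rm l};\cdot)\equiv0 .
\]
The key rank-one fact is that $N_2(A,\beta;\cdot)\equiv0$ with $A>0$ forces $A=1$ and $\beta\in\{0,\pm\frac12\}$. To see this, evaluate $M_2=N_2/(X(1+X))$ at $X=-1$. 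By the formula in the proof of Theorem~\ref{thm:rk1}, $M_2(-1)=c(\beta-1)(A-1)^3(A-\beta)(A-2\beta+1)$, so vanishing forces $A=1$ or $\beta$ to lie on one of the three lines $\beta\in\{1,A,\frac{A+1}2\}$.

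On each of those lines I would use the leading corner instead. By Proposition~\ref{prop:rk1corners} the top coefficient of $N_2$ is a nonzero multiple of $B_3(\beta+A/2)$. The near corner, $[X^1]N_2$, is a nonzero multiple of $A^6B_3((\beta+\frac12)/A)$. Both must vanish, and by Inkeri's result the rational roots of $B_3$ are $0,\frac12,1$. This gives $\beta+A/2\in\{0,\frac12,1\}$ and $(\beta+\frac12)/A\in\{0,\frac12,1\}$, a finite linear system. Intersecting it with the three lines and with $A>0$ should leave only $A=1$; if some other rational point survived, I would evaluate a further coefficient of $M_2$ there. At $A=1$ the factorization $N_2(1,\beta;X)=c\,\beta(4\beta^2-1)X(1+X)^4$ gives $\beta\in\{0,\pm\frac12\}$.

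Applying this fact to the four edges gives $a=d=1$ and $A_{\rm b}=\Delta/d=1$. Hence $\Delta=ad-mb^2=1$, so $mb^2=0$ and $b=0$. With $b=0$ the top and bottom pairs both equal $(1,B)$ and the left and right pairs both equal $(1,C/m)$, so $B,C/m\in\{0,\pm\frac12\}$. An alternative route to $b=0$ is Lemma~\ref{lem:ineq}: if $b\ne0$ then $A_{\rm b}<a$, contradicting $A_{\rm b}=a=1$.

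\emph{Sufficiency.} For $b=0$ the integral decouples, and $\ell_2=\lambda^{(2)}_1(B;X)+m^2\lambda^{(2)}_1(C/m;Y)$, as in the proof of the $b=0$ proposition. Theorem~\ref{thm:rk1} states that the even-order numerators vanish identically at $A=1$ with Euler shifts, so both terms are zero and $\Psi_2\equiv0$. The Nahm equations $1-x=x$ and $1-y=y$ give $x=y=\frac12$, so $K=\Q$.

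The main obstacle is the rank-one fact, specifically excluding the three lines in $M_2(-1)$ without extra computation. The corner Bernoulli values should settle it, but if they do not, I would fall back on directly factoring the coefficients of $M_2$ restricted to each line.
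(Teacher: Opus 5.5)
Your proposal is correct, and it reaches $b=0$ by a different route from the paper's.

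\textbf{How you get $b=0$.} The paper first rules out $b\ne0$ using the conic restriction of Theorem~\ref{thm:conic}. There $\Psi_2\equiv0$ forces $\widehat Q\equiv0$, whose coefficients give $a=d=1$, $b=-1$, $m=1$, hence $\Delta=0$. Only then, for $b=0$, does it use the decoupling $\ell_2=\lambda^{(2)}_a(B;X)+m^2\lambda^{(2)}_d(C/m;Y)$ and restriction to $X=0$, $Y=0$ to obtain $N_2(a,B;\cdot)\equiv N_2(d,C/m;\cdot)\equiv0$. You instead read off all four edges of Proposition~\ref{prop:edges}, which hold for every $b$, and get $b=0$ from the exponent arithmetic $A_{\rm b}=a-mb^2/d=1$. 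This handles both cases at once and avoids the conic identity and the bookkeeping on the coefficients of $\widehat Q$. The cost is that you rely on the top and right edges, which the paper establishes by symbolic computation rather than by the short argument it gives for the bottom and left edges.

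\textbf{The rank-one step.} Both arguments use the two $B_3$ corners to reduce to the nine pairs with $\beta+\frac A2,\ (\beta+\frac12)/A\in\{0,\frac12,1\}$. The paper excludes the six pairs with $A\ne1$ by six explicit values $N_2(A,\beta;1)\ne0$. You exclude them with the single formula for $M_2(-1)$; for this you need its constant $c$ to be nonzero, which it is. Your hedged ``should'' does go through. The six pairs $(\frac12,-\frac14)$, $(\frac13,-\frac16)$, $(2,-\frac12)$, $(\frac23,\frac16)$, $(3,-\frac12)$, $(\frac32,\frac14)$ lie on none of the lines $\beta=1$, $\beta=A$, $\beta=\frac{A+1}2$. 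So the fallback computation is unnecessary, and so is Inkeri's theorem, since $B_3$ splits over $\Q$.

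\textbf{Sufficiency.} Your converse, via the decoupled formula and the factorization of $N_2(1,\beta;X)$, spells out what the paper states in one line.
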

\begin{proof}
Suppose $\Psi_2\equiv0$. If $b\ne0$, Theorem~\ref{thm:conic} forces $\widehat Q\equiv0$. Its cubic and constant
coefficients give $d(d-1)=a(a-1)=0$, so $a=d=1$ by positivity; the other two then give $mb(1+b)=0$ and
$mb(1+mb)=0$, so $b=-1$ and $m=1$, and then $\Delta=0$, contradicting $\Delta>0$. Hence $b=0$, and then
$\ell_2(X,Y)=\lambda^{(2)}_a(B;X)+m^2\lambda^{(2)}_d(C/m;Y)$. Both terms vanish at $0$ (each $N_2$ is divisible by
$X$), so setting $Y=0$, resp.\ $X=0$, shows that both are identically zero. Now $N_2(A,\beta;X)\equiv0$ forces
both Bernoulli corners of Proposition~\ref{prop:rk1corners} to vanish:
$u:=\beta+\frac A2$ and $v:=(\beta+\frac12)/A$ are roots of $B_3(z)=z(z-\frac12)(z-1)$, so $u,v\in\{0,\frac12,1\}$,
$A=(u+\frac12)/(v+\frac12)$ and $\beta=Av-\frac12$. Three of the nine pairs have $A=1$, $\beta\in\{0,\pm\frac12\}$. The
other six are excluded by
\begin{gather*}
N_2(\tfrac12,-\tfrac14;1)=-\tfrac5{128},\qquad N_2(\tfrac13,-\tfrac16;1)=-\tfrac{25}{729},\qquad N_2(2,-\tfrac12;1)=-\tfrac52,\\
N_2(\tfrac23,\tfrac16;1)=\tfrac{65}{1458},\qquad N_2(3,-\tfrac12;1)=-25,\qquad N_2(\tfrac32,\tfrac14;1)=\tfrac{65}{128}.
\end{gather*}
Finally $N_2(1,\beta;X)=-\frac{\beta(4\beta^2-1)}{24}X(1+X)^4$ vanishes identically exactly for $\beta\in\{0,\pm\frac12\}$.
Conversely these parameters give $\Psi_2\equiv0$. The Nahm point is $x=y=\frac12$. (All values checked in
\texttt{euler\_check.py}.)
\end{proof}

\paragraph{Bihomogenization.} On $S_0=\mathbb P^1\times\mathbb P^1$ write $\bar F$ for the bihomogenization of a polynomial $F$
in $(X,Y)$ to a prescribed bidegree: $\bar D$ has bidegree $(1,1)$, where $D=\det\widetilde H$, $\bar R_1$ and $\bar f_r$ have
bidegree $(3,3)$ (Lemma~\ref{lem:first}(a)), and $\bar\Psi_j$ has bidegree $(3j,3j)$ (Proposition~\ref{prop:newt}).
Membership in an ideal generated by sections is meant locally, in trivializations of the line bundles.

\begin{lemma}[ideal powers]\label{lem:ideal}
Suppose that $\bar R_1$ and $\bar D$ have no common component, and let $\mathcal I=(\bar R_1,\bar D)=(\bar f_r,\bar D)$.
Then $\bar\Psi_j\in\mathcal I^j$ at every point of $S_0$, for $2\le j\le4$.
\end{lemma}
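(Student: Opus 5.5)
The statement is local, so I would check $\bar\Psi_j\in\mathcal I^j$ point by point on $S_0$, separating the points by where $\mathcal I$ is a proper ideal. Off the zero set of $\bar D$, and at points with $\bar D=0$ but $\bar R_1\ne0$, $\mathcal I$ is the unit ideal and there is nothing to prove. Since $\bar f_r=\bar R_1-r\bar D^3$, the two generating sets give the same ideal, so only the finitely many common zeros of $\bar R_1$ and $\bar D$ matter; they are finitely many because the two have no common component.

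\emph{Finite points with $b\ne0$.} Parametrize the conic by $\kappa$ as in Theorem~\ref{thm:conic}. By Lemma~\ref{lem:first}(b), a finite common zero with $\kappa_p\ne0$ is a root of $\widehat Q$ of some multiplicity $r$. Use the local coordinates $(s,t)$ of Lemma~\ref{lem:worder}, in which $D=t$. Then $R_1\equiv u\,s^{2r}\pmod t$ with $u$ a unit, again by Lemma~\ref{lem:first}(b). Hence locally $\mathcal I=(t,s^{2r})$ and
\[
\mathcal I^j=(t^k s^{2r(j-k)}:0\le k\le j).
\]
Lemma~\ref{lem:worder} gives $\Psi_j$ weighted order $\ge2rj$ for the weights $(1,2r)$. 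So every monomial $s^\alpha t^k$ of its expansion satisfies $\alpha\ge2r(j-k)$ and lies in this monomial ideal. Membership in the analytic (or completed) local ring descends to the algebraic local ring by faithful flatness of completion.

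\emph{Points at infinity with $b\ne0$.} These are the points where $\kappa\to0$ or $\kappa\to\infty$, namely $(\infty,-d)$ and $(-a,\infty)$, and $(\infty,\infty)$ is not on the conic. I would transport them to finite points of the conic of $p^*$ by $(X,Y)\mapsto(X^{-1},Y^{-1})$, using Proposition~\ref{prop:reciprocity}. Since $K_0$ is constant, $\bar R_1$ equals a unit times $R_1(p^*)$ plus a multiple of $\bar D^3$. Likewise $\bar D$ is a unit times $D(p^*)$, and $\bar\Psi_j$ is a unit times $\pm\Psi_j(p^*)$. Therefore the local ideal $\mathcal I$ at the point at infinity is the pullback of the corresponding ideal for $p^*$ at a finite point. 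There the off-diagonal entry $-mb/\Delta$ is still nonzero, so the finite case above applies. The same transport handles the bihomogenization factors, which are units in the chart, exactly as in the proof of Theorem~\ref{thm:six}.

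\emph{The case $b=0$, and the expected obstacle.} Here $D=m(a+X)(d+Y)$ is a union of two lines. I would write $x'=a+X$, $y'=d+Y$ and use the decoupling
\[
\Psi_j=m^{3j}\bigl[y'^{3j}N_j(a,B;X)+m^j x'^{3j}N_j(d,C/m;Y)\bigr],
\]
and the analogous formula for $R_1$ with $N_1$. Along the line $X=-a$ away from $Y=-d$, $R_1$ is a unit times $x'^{3}$ plus higher terms only if $N_1(a,B;-a)=c_a=\frac5{24}a^2(a-1)^2\ne0$; here the hypothesis of no common component excludes $a=1$ and $d=1$. So locally $\mathcal I=(x',y'^{\,\cdot})$ reduces to a line or a smooth point, and the divisibility follows from $x'^{3j}\mid\Psi_j$ modulo $x'^j$. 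The genuinely delicate point is the crossing $(-a,-d)$. There
\[
\mathcal I=(x'y',\ c_a\,y'^3+m\,c_d\,x'^3+\cdots).
\]
In $\mathcal O/(x'y')$, the power $\mathcal I^j$ contains $(c_ay'^3+mc_dx'^3)^j\equiv c_a^jy'^{3j}+m^jc_d^jx'^{3j}$. So I must check that the pure terms $N_j(a,B;-a)\,y'^{3j}+m^jN_j(d,C/m;-d)\,x'^{3j}$ of $\Psi_j$ have exactly this ratio. This should follow from the rank-one leading-pole identity $N_j(A,\beta;-A)=\kappa_j c_A^{2j}$ with a universal $\kappa_j$, the analogue of the $\frac{36}5c_A^2$ in Proposition~\ref{prop:decoupled}, which gives $\kappa_2=\frac{36}5$. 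One also needs the mixed terms of low degree to lie in $(x'y')\mathcal I^{j-1}$. I expect this crossing computation, together with the proper bookkeeping of orders of $c_A^{2j}$ versus $c_A^{j}$ (which suggests using $\mathcal I^j$ only up to the degree $3j$ needed), to be the main obstacle. If the direct argument is awkward, I would instead deform: treat $b$ as a parameter, apply the $b\ne0$ result in the family, and use that ideal-power membership is closed under specialization when the colength is constant.
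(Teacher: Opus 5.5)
For $b\ne0$ your argument is correct and takes a different route from the paper. The paper proves the membership by hand only for generic parameters: three simple nonzero roots of $\widehat Q$ and no common zero at infinity. Every other case, including conic points at infinity and $b=0$, then comes from flatness of $\mathcal O/\mathcal I^j$ over the irreducible parameter set $T$. You instead apply Lemma~\ref{lem:worder} at roots of arbitrary multiplicity $r$, so that locally $\mathcal I=(t,s^{2r})$. You then move $(\infty,-d)$ and $(-a,\infty)$ to finite points of the conic of $p^*$ using Proposition~\ref{prop:reciprocity}. The transport is right: in the chart $u=1/X$ one gets $\bar R_1=K_0\bar D^3-\Delta^3Y^3R_1(p^*;u,Y^{-1})$, $\bar D=\Delta Y\,D(p^*;u,Y^{-1})$ and $\bar\Psi_j=(-1)^j(\Delta Y)^{3j}\Psi_j(p^*;u,Y^{-1})$. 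The cost is a dependence on the computer-verified reciprocity identities, which the flatness argument does not need.

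The gap is the case $b=0$. Off the crossing there is nothing to prove: $\bar R_1$ restricted to $X=-a$ is $c_am^3(d+Y)^3$, which is nonzero also at $Y=\infty$, and the same holds on $Y=-d$. So $V(\mathcal I)$ is the single point $(-a,-d)$, of length $6$, and your remark about ``$x'^{3j}\mid\Psi_j$ modulo $x'^j$'' is not needed. At the crossing you leave the proof open, and the identity you rely on is misstated. The correct statement is $N_j(A,\beta;-A)=\kappa_jc_A^{\,j}$, not $c_A^{2j}$; Proposition~\ref{prop:decoupled} itself gives $\frac{36}{5}c_A^2$ for $j=2$. The reason is the power counting of Lemma~\ref{lem:poles} with $e=1$: only graphs with $2j$ cubic vertices reach pole order $3j$ at $Z=-A$. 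Each such vertex carries $A(A-1)$, so the coefficient is a universal constant times $(A(A-1))^{2j}\propto c_A^{\,j}$. With your exponent the required ratio fails whenever $c_a^j\ne c_d^j$. Restricting $\mathcal I^j$ ``up to degree $3j$'' is not a legitimate way around this, since the lemma asks for genuine membership in $\mathcal I^j$.

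With the correct exponent the crossing case closes as follows. Put $g=c_ay'^3+mc_dx'^3$, so that $\mathcal I=(x'y',g)$. Then $\Psi_j/m^{3j}\equiv\kappa_jg^j \pmod{x'y'}$, and the difference lies in $\mathcal I^j$ by bookkeeping:
\begin{itemize}
\item $y'^{3n+1},x'^{3n+1}\in\mathcal I^n$, by induction from $y'g^n$ and $x'g^n$;
\item hence $x'^\alpha y'^{3j}$ and $x'^{3j}y'^\alpha$ lie in $\mathcal I^j$ for $\alpha\ge1$;
\item likewise the mixed terms $x'^{3i}y'^{3(j-i)}$, $0<i<j$, of $g^j$ lie in $\mathcal I^j$.
\end{itemize}

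Your fallback of deforming in $b$ is essentially the paper's flatness argument and would also work, but it must use the total colength $6\binom{j+1}{2}$ of $\mathcal O/\mathcal I^j$ near the crossing rather than pointwise colengths. As $b\to0$ the three conic points, given by the roots of $\widehat Q\to d(d-1)\kappa^3+ma(a-1)$, all collapse to $(-a,-d)$. The no-common-component hypothesis holds along the family for small $b\ne0$, because this limit is not identically zero.
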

\begin{proof}
\emph{Generic parameters.} Take $b\ne0$ and $\widehat Q$ of degree $3$ with three simple roots, none zero. By
Lemma~\ref{lem:first}(b), $\bar R_1$ restricted to the conic vanishes exactly at the three roots, to order two, and not
at the two points of the conic at infinity ($\kappa=0,\infty$). So $V(\mathcal I)$ consists of the three conic points,
and in the coordinates $(s,t)$ of Lemma~\ref{lem:worder}, $\mathcal I=(R_1,t)=(s^2,t)$ there. Lemma~\ref{lem:worder} gives
weighted order $\ge2j$ for the weights $(1,2)$, and a monomial $s^\alpha t^\gamma$ with $\alpha+2\gamma\ge2j$ lies in
$(s^2,t)^j$. So $\bar\Psi_j\in\mathcal I^j$.

\emph{All parameters.} Let $T\subset\C^6$ be the open set of parameters $(a,b,d,m,B,C)$ for which $\bar R_1$ and $\bar D$
have no common component; it is irreducible and contains the generic parameters above. On $S_0\times T$ the
subscheme $\mathcal Z=V(\mathcal I)$ is cut out by two equations and has finite fibres, so it is finite over $T$, a
complete intersection, and flat over $T$ \cite[Thm.~23.1]{Mat}. Since $\mathcal I$ is locally generated by a regular
sequence, $\mathcal I^\nu/\mathcal I^{\nu+1}\cong\operatorname{Sym}^\nu(\mathcal I/\mathcal I^2)$ is locally free over $\mathcal O_{\mathcal Z}$
\cite[Thm.~16.2]{Mat}, hence flat over $T$, and by induction on $j$ the sheaf $\mathcal O/\mathcal I^j$ is finite and flat
over $T$. The pushforward of $\mathcal O/\mathcal I^j\otimes\mathcal O(3j,3j)$ to $T$ is therefore locally free, and the class of
$\bar\Psi_j$ is a section of it that vanishes on a dense open subset, hence everywhere; since forming this quotient
commutes with restriction to fibres, $\bar\Psi_j\in\mathcal I^j$ on every fibre.
\end{proof}
Repeated finite conic roots are already covered by Lemma~\ref{lem:worder}; the flatness argument extends the
statement to conic points at infinity and to the decoupled case. It was also checked at a simple and at a double
conic root on the boundary of $S_0$, where the required lower bounds hold (\texttt{check32c.py}).

\begin{corollary}\label{cor:count}
Suppose that $\bar R_1$ and $\bar D$ have no common component. Let $G\in\Q[X,Y]$ be a reduced divisor of $f_r$ of
bidegree $(w,h)$, and let $2\le j\le4$ be such that $G$ and $\Psi_j$ have no common component. Then at most
$2j(w+h)$ points of $V(G)\cap V(\Psi_j)$ satisfy $D\ne0$.
\end{corollary}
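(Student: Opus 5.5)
The plan mirrors the proof of Theorem~\ref{thm:six}, with Lemma~\ref{lem:ideal} taking the place of the pole-order lemma. Let $\nu:\widetilde{\mathcal C}\to\mathcal C$ be the normalization of the closure $\mathcal C\subset S_0$ of $V(G)$. The goal is to show that $\nu^*\bar\Psi_j/(\nu^*\bar D)^j$ is a regular section of $\nu^*\mathcal O(3j-j,3j-j)=\nu^*\mathcal O(2j,2j)$. On each irreducible component $\mathcal C_i$ of bidegree $(w_i,h_i)$, this line bundle has degree $2j(w_i+h_i)$. Since $G$ is reduced, the sum over components gives $2j(w+h)$.

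\emph{Step 1: the quotient has no poles.} Poles can occur only at points of $\mathcal C$ where $\bar D=0$. Such a point lies on $V(\bar f_r)$ because $G\mid f_r$, so it lies in $V(\mathcal I)$. Let $\mathfrak p$ be a point of $\widetilde{\mathcal C}$ over it, with valuation $\ord_{\mathfrak p}$. Since $\bar f_r$ vanishes identically on $\mathcal C$, pulling back $\mathcal I=(\bar f_r,\bar D)$ gives $\nu^*\mathcal I\cdot\mathcal O_{\mathfrak p}=(\nu^*\bar D)$. Lemma~\ref{lem:ideal} gives $\bar\Psi_j\in\mathcal I^j$ locally. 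Hence $\nu^*\bar\Psi_j\in(\nu^*\bar D)^j$ in $\mathcal O_{\mathfrak p}$, that is, $\ord_{\mathfrak p}\bar\Psi_j\ge j\,\ord_{\mathfrak p}\bar D$. This holds at finite points and at points at infinity alike, because the lemma is stated on all of $S_0$. Two facts must be checked: $\bar D$ does not vanish identically on any component of $\mathcal C$, and the bidegree bookkeeping of the quotient is correct. For the first, a component inside $\{\bar D=0\}$ would be a common component of $\bar f_r$ and $\bar D$, which is excluded by hypothesis since $\mathcal I=(\bar R_1,\bar D)=(\bar f_r,\bar D)$.

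\emph{Step 2: counting zeros.} By hypothesis $G$ and $\Psi_j$ share no component. So $\nu^*\bar\Psi_j$ is not identically zero on any component, and the quotient section is nonzero on each component. A nonzero regular section of a line bundle of degree $2j(w_i+h_i)$ on the smooth curve $\widetilde{\mathcal C}_i$ has at most that many zeros, counted with multiplicity. Now take a point $P\in V(G)\cap V(\Psi_j)$ with $D(P)\ne0$. Any preimage of $P$ is a zero of the quotient, because the numerator vanishes there and the denominator does not. Distinct points of $V(G)$ have disjoint preimages, so summing over components bounds the number of such points by $2j(w+h)$.

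The main obstacle is Step 1 at singular points of $\mathcal C$ and at points at infinity. The reason for working on $\widetilde{\mathcal C}$ is that ideal membership pulls back and yields a valuation inequality at each branch. The only care needed is that membership in $\mathcal I^j$ is taken in local trivializations, so the inequality is between the orders of the sections $\bar\Psi_j$ and $\bar D^j$, with the degree shift $(3j,3j)-(j,j)$ absorbed into the line bundle. Since $G$ has rational coefficients, the count also applies to the Galois orbit of the Nahm point, which is how it will be used.
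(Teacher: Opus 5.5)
Your proposal is correct and follows essentially the paper's argument. On the curve, $\mathcal I$ reduces to $(\bar D)$ because $\bar f_r$ vanishes there, so Lemma~\ref{lem:ideal} turns $\bar\Psi_j/\bar D^j$ into a regular section of $\mathcal O(2j,2j)$ of degree $2j(w+h)$. That section is nonzero on every component and vanishes at each point of $V(G)\cap V(\Psi_j)$ with $D\ne0$.

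The only difference is cosmetic. The paper first writes $\bar\Psi_j|_C=\sigma\,\bar D^j|_C$ on $C$ itself and then passes to the normalization to count zeros. You divide branchwise on $\widetilde{\mathcal C}$ from the start, which avoids having to glue $\sigma$ at singular points of $C$.
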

\begin{proof}
Let $C\subseteq S_0$ be the closure of $V(G)$. On $C$ we have $\bar f_r=0$, so Lemma~\ref{lem:ideal} gives
$\bar\Psi_j|_C=\sigma\,\bar D^j|_C$ with $\sigma$ a regular section of $\mathcal O(2j,2j)|_C$. Since $\bar D$ and $\bar f_r$ have
no common component, $\bar D$ vanishes on no component of $C$; since $G$ and $\Psi_j$ are coprime, neither does
$\sigma$. The degree of $\mathcal O(2j,2j)$ on $C$ is $2j(w+h)$. Every point of $C$ with $\Psi_j=0\ne D$ is a zero of $\sigma$,
and pulling $\sigma$ back to the normalization of $C$ shows that there are at most $2j(w+h)$ such points.
\end{proof}

\begin{proof}[Proof of Theorem~\ref{thm:main}]
Let $P=(X_0,Y_0)$ be the Nahm point and $r=c+\frac{1+m}{24}$, so that $f_r(P)=\Psi_2(P)=\Psi_3(P)=\Psi_4(P)=0$.

\emph{Degenerate cases.} If $\Psi_2\equiv0$, Lemma~\ref{lem:euler} gives $K=\Q$. If $b=0$ and $a=1$, the decoupled
Nahm equation $1-x=x$ gives $X_0=1$. At $b=0$, $R_1=m^3(d+Y)^3N_1(a,B;X)+m^4(a+X)^3N_1(d,\frac Cm;Y)$
(\texttt{first\_coeff.py}), so $\lambda_1(P)=r$ reads $N_1(d,\frac Cm;Y_0)-\rho(d+Y_0)^3=0$ with $\rho\in\Q$. This
polynomial in $Y_0$ is nonzero of degree at most $3$ (Lemma~\ref{lem:edgediv}), so $[K:\Q]\le3$. The case $d=1$ is
symmetric.

\emph{General case.} Otherwise $\bar R_1$ and $\bar D$ have no common component: for $b\ne0$ the conic is irreducible and
$R_1$ does not vanish on it (Lemma~\ref{lem:first}(b), since $\widehat Q\not\equiv0$ when $\Delta>0$); for $b=0$, $a,d\ne1$, neither
line $X=-a$, $Y=-d$ divides $R_1$ (Lemma~\ref{lem:first}(c) and the exchanged identity). Also $f_r\not\equiv0$
(Lemma~\ref{lem:first}(d)). For $m\ne1$: if $b=0$ and a common component of $f_r,\Psi_2,\Psi_3,\Psi_4$ passes
through $P$, then $[K:\Q]\le2$ (Section~\ref{sec:b0}); otherwise no such component passes through $P$, by
Corollary~\ref{cor:nocomp} when $b\ne0$. For $m=1$: by Theorem~\ref{thm:m1}, a common component through $P$ exists
only on Zagier's family, where $P=(1,1)$ and $K=\Q$; otherwise none passes through $P$. Let $G\in\Q[X,Y]$ be an irreducible factor of $f_r$ with $G(P)=0$, and let $j\in\{2,3,4\}$ be the least
index with $G\nmid\Psi_j$; it exists, for otherwise the component of $G$ through $P$ would be a common component
of $f_r,\Psi_2,\Psi_3,\Psi_4$. As $G$ is irreducible over $\Q$ and $\Psi_j$ has rational coefficients, $G$ and $\Psi_j$
have no common component over $\C$, so $V(G)\cap V(\Psi_j)$ is finite and $P$ is algebraic. The $[K:\Q]$
conjugates of $P$ are distinct points of $V(G)\cap V(\Psi_j)$, and
$D\ne0$ at each of them, because $D(P)>0$ and $D$ has rational coefficients. By Corollary~\ref{cor:count},
$[K:\Q]\le2j(w+h)$. If $j=2$ then $w,h\le3$ and $[K:\Q]\le24$. If $j=3$ then $G\mid\Psi_2$, so $w,h\le2$ by
Lemma~\ref{lem:width} and $[K:\Q]\le24$. If $j=4$ then $G\mid\Psi_2,\Psi_3$ and $w,h\le2$; by
Proposition~\ref{prop:decoupled} this forces $b\ne0$, and Theorem~\ref{thm:six} gives $[K:\Q]\le6(w+h)\le24$.
\end{proof}

\emph{The count is attained.} At $s_1=(\frac73,-\frac12,\frac52,3,\frac14,\frac23)$ and $r=\frac17$,
$\operatorname{Res}_Y(f_r,\Psi_2)=C_3(X)^4R_{24}(X)$, where $C_3=540X^3+3402X^2+6939X+4697$ is the cubic of the conic
points and $R_{24}$ is irreducible of degree $24$ (\texttt{first\_res.py}). So for $G=f_r$ and $j=2$ the intersection
count $24$ is attained, and the argument cannot give less for this pair; this concerns the count, not the degree of
a modular Nahm field.

\paragraph{Using the equations jointly.} At the generic point $s_1$, $r=\frac17$, the $24$ residual zeros of
$(f_r,\Psi_2)$ are not zeros of $\Psi_3$: the three equations have no common zero with $D\ne0$. For the first listed
shift of each of the fourteen matrices of Table~\ref{tab:known}, the common zeros of $f_r,\Psi_2,\Psi_3$ with $D\ne0$
are exactly the Nahm point and its conjugates, together with one rational point outside the positive quadrant
in Examples~1 and~2, namely $(1,-2)$ and $(1,-\frac12)$ (\texttt{joint\_check.py}). Equivalently, fix the parameters
and let $Z$ be the finite set of common zeros of $\Psi_2$ and $\Psi_3$ with $D\ne0$ and $X\notin\{0,-1\}$; each $Q\in Z$
determines $r=\lambda_1(Q)$, and a modular case is a rational root of $\Phi(r)=\prod_{Q\in Z}(r-\lambda_1(Q))$ of
multiplicity at least $[K:\Q]$. At $s_1$ the $63$ values $\lambda_1(Q)$ are distinct and irrational. In Example~7
with $(B,C)=(0,0)$, $\Phi$ has the root $\frac19$ of multiplicity $3$ from the Nahm orbit, the root $\frac1{60}$ of
multiplicity $2$ from two points on the line $Y=0$, and $48$ further distinct irrational roots
(\texttt{phi\_poly.py}). So a field of degree $k\ge2$ requires $\lambda_1$ to be constant and rational on a Galois
orbit of $k$ points of $Z$. This suggests that the
correction equations used jointly cut the count down to the conjectural bounds ($3$ for $m\ge2$, $4$ for $m=1$), but a proof would require a
parameter-uniform elimination for $f_r,\Psi_2,\Psi_3,\Psi_4$ covering every exceptional locus.

\section{The symmetric case $m=1$}\label{sec:m1}
For $m=1$ Lemma~\ref{lem:rig} allows diagonal edges ($s=\pm1$), and common components through the positive
quadrant really occur. We replace Sections~\ref{sec:rigidity}--\ref{sec:b0} by a classification. Throughout this
section $m=1$; by Lemma~\ref{lem:width} it suffices to consider rational irreducible $G$ of bidegree $(w,h)$ with
$w,h\le2$, dividing $f_r,\Psi_2,\Psi_3,\Psi_4$ (for $b=0$ we assume $a,d\ne1$; the case $b=0$, $a=1$ or $d=1$ is
treated in the proof of Theorem~\ref{thm:main}).

\begin{theorem}\label{thm:m1}
Let $m=1$, assume that $b\ne0$ or $a,d\ne1$, and let $G\in\Q[X,Y]$ be irreducible, dividing $f_r,\Psi_2,\Psi_3,\Psi_4$,
with $V(G)$ meeting $\{X,Y>0\}$. Then $d=a$, $b=1-a$, either $C=-B$ or $(B,C)=(-\frac12,-\frac12)$, and $G=XY-1$ up to a constant.
In these cases the Nahm point is $(x,y)=(\frac12,\frac12)$, that is $(X,Y)=(1,1)$, which lies on $G$.
\end{theorem}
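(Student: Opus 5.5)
The plan is to classify $G$ through its behaviour on the six rank-one lines $X,Y\in\{0,-1,\infty\}$, as announced in the outline (Step~3 for $m=1$). By Lemma~\ref{lem:width} we may assume $w,h\le2$.

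\textbf{Boundary traces.} First record what $G$ can look like on each line. Restricted to $Y=0$, the polynomial $G(X,0)$ divides $\Psi_j(X,0)$ for $j=2,3,4$. By Proposition~\ref{prop:edges} (with $m=1$) these restrictions are rank-one numerators. The same argument at $X=-1$ and $Y=-1$ uses the effective exponents $a-1$ and $d-1$. So $G(X,0)$ divides the gcd of Corollary~\ref{cor:D}. The same holds for $G(0,Y)$, for the leading coefficients $[Y^h]G$ and $[X^w]G$, and, after the substitutions $X=-1$, $Y=-1$, for the restrictions to those two lines.

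\textbf{Root patterns.} Each trace therefore has its roots in the finite set $\{0,-1,\infty,1,\pm\varphi^{\pm1}\}$. Either the trace vanishes identically, which means $G$ is that line, and that is excluded by Corollary~\ref{cor:shape}(i)-type arguments. Or the corresponding effective pair $(A,\beta)$ is special. For each line this also records the constraint that speciality imposes on $(a,b,d,B,C)$.

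Unlike the case $m\ne1$, diagonal edges are now allowed. So the Newton polygon of $G$ need not be a rectangle, and the corner coefficients of $f_r$ and $\Psi_j$ enter only via Lemma~\ref{lem:rig} with $s=\pm1$, where that lemma yields $U=V$, i.e.\ a symmetry between two Bernoulli arguments. I would keep these as extra equations relating $\xi,\eta,x_1,\dots,x_8$.

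With $w,h\le2$, the coefficients of $G$ are determined up to a few unknowns by these boundary traces. Enumerate all root patterns on the six lines, consistent with bidegree $(w,h)$ and with $G$ being irreducible, rational and meeting $\{X,Y>0\}$. This is a finite list, and for each pattern the exponents forced on the effective pairs give polynomial equations in $(a,b,d,B,C)$.

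\textbf{Elimination.} For each pattern, impose $G\mid f_r$ and $G\mid\Psi_2$: take remainders modulo $G$ in $Y$, which is possible when the leading coefficient is a known nonzero polynomial. Together with the conic condition of Theorem~\ref{thm:conic} when $b\ne0$, compute saturated Gröbner bases, saturating by $ad\Delta$ and the nonvanishing leading coefficients. Then add $\Psi_3$ and $\Psi_4$ only where components survive. The expected output is a single surviving stratum, $a=d$, $b=1-a$, $G=XY-1$, with the shift condition $C=-B$ or $B=C=-\frac12$. For $b=0$ the same enumeration applies with $a,d\ne1$, and Proposition~\ref{prop:decoupled} already kills it.

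Finally verify the converse on the surviving stratum. Since $\Delta=a^2-(1-a)^2=2a-1$, the Nahm equations with $x=y$ reduce to $1-x=x^{a+b}=x$, so $x=\tfrac12$ and $(X,Y)=(1,1)$ lies on $XY=1$.

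The main obstacle is the bidegree-$(2,2)$ strata. There the boundary data leave several free coefficients and parameters, and the Gröbner computations over $\Q$ with all parameters symbolic may be too large. I would first try splitting further by which effective pairs are Euler versus modular. Failing that, I would use modular (finite-field) certificates of emptiness for the saturated ideals, lifting only the strata that survive mod several primes.
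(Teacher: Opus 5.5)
Your framework (traces on the six lines $X,Y\in\{0,\infty,-1\}$, a finite list of root patterns, saturated Gr\"obner bases, the conic condition) is the paper's. Your treatment of $b=0$ is a legitimate shortcut: Proposition~\ref{prop:decoupled} makes no assumption on $m$, so for $b=0$, $a,d\ne1$ it already excludes any common factor of $f_r$ and $\Psi_2$, replacing the paper's enumeration of $217$ patterns.

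\textbf{Gap 1: the alphabet on the lines $X=-1$, $Y=-1$.} The boundary step that should make the enumeration exhaustive fails on these two lines.
\begin{itemize}
\item The effective exponent on $X=-1$ is $(\Delta-d)/(a-1)=d-b^2/(a-1)$ (resp.\ $(\Delta-a)/(d-1)$ on $Y=-1$), and it need not be positive. For example $(a,b,d)=(2,2,4)$ gives $0$ and $(a,b,d)=(\frac54,1,3)$ gives $-1$, both with $\Delta>0$. Theorem~\ref{thm:rk1}, hence Corollary~\ref{cor:D}, assumes $A>0$. Over all rational $A$ the eliminant also vanishes at $A=0$ and $A=-1$, giving the extra gcd factors $Z^2(Z+2)$ (for $\beta\in\{0,\frac12,1\}$) and $Z^2+3Z+1$ (for $\beta=\pm\frac12$). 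So $-2$ and $\frac{-3\pm\sqrt5}2$ must be in your alphabet. Curves using them do occur: the paper must exclude three curves with root $1$ on $X=0$ and $-2$ on $X=-1$.
\item When $a=1$ or $d=1$ with $b\ne0$, which the hypothesis allows (including $a=d=1$, $0<|b|<1$), the corresponding line is not a rank-one line, because the Gaussian in $x$ (resp.\ $y$) degenerates. There $\Psi_2,\Psi_3,\Psi_4$ restrict to constants times $Y(1+Y)$, $Y(1+Y)(2Y+1)$, $Y(1+Y)(6Y^2+6Y+1)$, and $f_r$ restricts to a polynomial of degree exactly one. These cases need their own alphabet (the paper allows $-\frac12$ when the $\Psi_2$-trace vanishes) and their own enumerations.
\item At exponents $0$ and $1$ the roots $0$ and $-1$ can occur with multiplicity, and the pattern list must allow this.
\end{itemize}
With the alphabet $\{0,-1,\infty,1,\pm\varphi^{\pm1}\}$ justified only by Corollary~\ref{cor:D}, your list of patterns is not exhaustive, so the single surviving stratum does not follow.

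\textbf{Gap 2: the non-special stratum and bidegree $(2,2)$.} Your dichotomy ``the trace vanishes identically or the pair is special'' is wrong. For a non-special pair a trace can be nonconstant with roots in $\{0,-1,\infty\}$, and the answer $XY-1$ is exactly of this kind, with one such root on each of the six lines. This stratum carries the real difficulty, and for its bidegree-$(2,2)$ part you offer only computational fallbacks. The paper makes it tractable by an incidence count on the grid $\{0,\infty,-1\}^2$:
\begin{itemize}
\item If all trace roots are simple and lie in $\{0,-1,\infty\}$, then $w=h$.
\item For bidegree $(1,1)$, $G$ is the graph of a M\"obius permutation of $\{0,-1,\infty\}$, and only $Y=X$ and $XY=1$ meet the quadrant.
\item For bidegree $(2,2)$, the grid points missing from $V(G)$ form a permutation. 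Proposition~\ref{prop:ord22} evaluates $f_r$ at the nine grid points as $V_{ij}=p_{ij}E_{ij}$, so divisibility forces explicit cores $E_{ij}$ to vanish off the strata $\Delta\in\{a,d,a+d-1\}$. This leaves five points and two one-parameter families, which are removed by finite-field resultant certificates.
\end{itemize}
You need a structural reduction of this kind, or an actually completed computation, before the proposal is a proof.
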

The proof is computer-assisted and occupies the rest of this section. Conversely, $XY-1$ divides $\Psi_2$, $\Psi_3$
and $f_r$ identically on this family (checked symbolically, and for $\Psi_4$ at sample points); this is not
needed for the degree bound. Geometrically, on $x+y=1$ the sum collapses: with $a(n-k)^2/2+nk$ as the
quadratic form, summing over $n-k$ fixed gives $1/(q)_\infty$ times a theta function.

\paragraph{The six rank-one lines.} On each of the lines $Y\in\{0,\infty,-1\}$ and $X\in\{0,\infty,-1\}$ the
restriction of $\Psi_j$ is a nonzero constant times $N_j(A_L,\beta_L;\cdot)$, and that of $f_r$ a nonzero constant
times $N_1(A_L,\beta_L;\cdot)-\rho_L(A_L+\cdot)^3$, with (for $m=1$)
\begin{center}\small
\begin{tabular}{lll}
\toprule
line & $(A_L,\beta_L)$ & $\rho_L$\\
\midrule
$Y=0$ & $(\Delta/d,\ B-bC/d)$ & $r-C^2/(2d)$\\
$Y=\infty$ & $(a,\ B)$ & $r-\frac1{24}$\\
$Y=-1$ & $((\Delta-a)/(d-1),\ B-b(C-\frac12)/(d-1))$ & $r-(12C^2-12C+2d+1)/(24(d-1))$\\
$X=0$ & $(\Delta/a,\ C-bB/a)$ & $r-B^2/(2a)$\\
$X=\infty$ & $(d,\ C)$ & $r-\frac1{24}$\\
$X=-1$ & $((\Delta-d)/(a-1),\ C-b(B-\frac12)/(a-1))$ & $r-(12B^2-12B+2a+1)/(24(a-1))$\\
\bottomrule
\end{tabular}
\end{center}
(Proposition~\ref{prop:edges} and the edges of $f_r$ in Section~\ref{sec:first}; on $X=-1$ all Taylor terms of
valence $\ge2$ in $x$ vanish, since $P_k(-1)=0$ for $k\ge1$, so $x$ can be integrated out as a Gaussian, and
symmetrically on $Y=-1$; the resulting formulas were checked symbolically, \texttt{rho\_lines.py}). The lines $X=-1$, $Y=-1$ require $a\ne1$, $d\ne1$ respectively. Since $G$
divides $\Psi_2,\Psi_3,\Psi_4$ and $f_r$, the restriction of $G$ to each line divides $\gcd_{j\le4}N_j(A_L,\beta_L;\cdot)$ and
the $f_r$-restriction.

\paragraph{Common roots.} For rational $(A,\beta)$ the common roots of $N_2,N_3,N_4$ other than $0,-1$ occur only
at the pairs in the following table (the eliminant of Theorem~\ref{thm:rk1} has rational roots only at
$A\in\{2,1,0,-1,\frac12\}$, and $A\le0$ can occur only on the lines $X,Y=-1$; \texttt{special\_pairs.py}):
\begin{center}\small
\begin{tabular}{ll}
\toprule
$(A,\beta)$ & $\gcd(N_2,N_3,N_4)$ besides $Z$, $1+Z$\\
\midrule
$(2,0),(2,1)$ & $Z^2-Z-1$\\
$(\frac12,0),(\frac12,\frac12)$ & $Z^2+Z-1$\\
$(-1,\pm\frac12)$ & $Z^2+3Z+1$\\
$A=1$ & $(1+Z)^3$ more; for $\beta\in\{0,\pm\frac12\}$ also $(1+Z)^2(Z-1)$\\
$A=0$ & $Z^3$ more; for $\beta\in\{0,\frac12,1\}$ also $Z^2(Z+2)$\\
\bottomrule
\end{tabular}
\end{center}
The roots $0$ and $\infty$ are otherwise simple (the relevant corners are values of $B_4$, which has no rational
root). Hence on each line the roots of $G$ lie in $\{0,-1,\infty\}$, all simple, unless the line's pair is special.
A factor may, however, use an ordinary root once on a line whose exponent is $A=0$ or $A=1$, where the rank-one
polynomials have that root with higher multiplicity. We therefore impose every root condition as the polynomial
equation $H_L(z)=0$, where
\begin{gather*}
H_L(Z)=N_1(A_L,\beta_L;Z)-\rho_L(A_L+Z)^3,\\
H_{0,\beta,\rho}=Z^2\Bigl[(\tfrac1{24}-\rho)Z+\tfrac{6\beta^2-6\beta+1}{12}\Bigr],\qquad
H_{1,\beta,\rho}=(Z+1)^2\Bigl[(\tfrac1{24}-\rho)Z+\tfrac{\beta^2}2-\rho\Bigr],
\end{gather*}
and never through the quotients $N_1(A,\beta;0)/A^3=\beta^2/(2A)$ or $N_1(A,\beta;-1)/(A-1)^3$. For $b\ne0$ and $a,d\ne1$ a
finite line has exponent $0$ or $1$ exactly on the loci $\Delta=a$, $\Delta=d$ and $\Delta=a+d-1$, which we treat as
separate strata.

\paragraph{Root patterns.} Prescribing the roots (with multiplicity) of $G$ on the six lines is a set of linear
conditions on its coefficients, and each root $z$ on a line $L$ imposes $H_L(z)=0$ on the parameters; a special root
also imposes the corresponding special pair. A pattern is admitted when some $G$ in its solution space is nonzero on
every line; this holds exactly when no line restriction vanishes on the whole rational kernel (a vector space over
$\Q$ is not a finite union of proper subspaces). The resulting systems in $(a,b,d,B,C,r)$ are solved exactly, by
Gr\"obner bases in \textsc{Singular} \cite{Sing} saturated by $ab\,d(a-1)(d-1)\Delta$. A curve is discarded for missing the
quadrant only when all its coefficients have one sign, and as reducible only by an exact factorization or a
common factor of the whole solution space.

\begin{lemma}\label{lem:tangent}
Let $m=1$, $a,d>0$, $b\ne0$, $\Delta>0$. If $XY-1$ divides $f_r$ and is tangent to the conic $D=0$, then $a=d$ and $b=1-a$.
\end{lemma}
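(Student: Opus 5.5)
The plan is to turn the two hypotheses into polynomial conditions on $(a,b,d)$ and then eliminate. Throughout I take $m=1$, so $D=(a+X)(d+Y)-b^2$.

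\emph{Tangency.} I would use the conic parametrization of Theorem~\ref{thm:conic}, $X_\kappa=-a-\kappa b$, $Y_\kappa=-d-b/\kappa$. The condition $X_\kappa Y_\kappa=1$, multiplied by $\kappa$, becomes
\[
bd\,\kappa^2+(ad+b^2-1)\kappa+ab=0 .
\]
Tangency of $XY-1$ to the conic means this quadratic has a double root. That gives the discriminant condition
\[
(ad+b^2-1)^2=4ab^2d,
\]
equivalently $(1+\Delta)^2=4ad$ after eliminating via $Y=1/X$. The double root is
\[
\kappa_t=-\frac{ad+b^2-1}{2bd},
\]
which is finite and nonzero because $a,b,d\neq 0$.

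\emph{Divisibility.} Since $XY-1$ divides $f_r$, $f_r$ vanishes at the tangency point, which lies on the conic. There $D=0$, so the term $rD^3$ drops out. Lemma~\ref{lem:first}(b) then reduces $f_r(X_{\kappa_t},Y_{\kappa_t})=0$ to $\widehat Q(\kappa_t)=0$ (using $b\ne0$), with
\[
\widehat Q(\kappa)=d(d-1)\kappa^3+b(2d-1+b)\kappa^2+b(2a-1+b)\kappa+a(a-1).
\]
Substituting $\kappa_t$ and clearing denominators gives a second polynomial equation $E(a,b,d)=0$. Together with the discriminant condition this is two equations in three unknowns, so the solution set should be a curve. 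The target locus $a=d$, $b=1-a$ is indeed such a curve. A quick check: there $1+\Delta=1+a^2-(1-a)^2=2a$, so $(1+\Delta)^2=4a^2=4ad$.

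\emph{Elimination.} I would compute the resultant, or a lex Gr\"obner basis saturated by $abd\Delta$, of the discriminant condition and $E$. I would then factor the result over $\Q$, and over $\Q(\sqrt{\cdot})$ where the discriminant condition factors; for instance one can set $1+\Delta=\pm2\sqrt{ad}$ after a rational parametrization $ad=t^2$. The expected outcome is that the component $\{a=d,\ b=1-a\}$ appears, together with extraneous components. The extraneous components would be excluded in one of three ways: by positivity ($a,d,\Delta>0$), by the absence of rational points, or by the exchange symmetry $a\leftrightarrow d$ of Lemma~\ref{lem:swap} at $m=1$, which pairs components.

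\emph{Fallback if components survive.} If some extraneous component survives these filters, I would add the further consequences of $XY-1\mid f_r$. Writing $f_r=(XY-1)g$ with $g$ of bidegree $(2,2)$ forces the corners $[X^3]f_r$ and $[Y^3]f_r$ to vanish. By Lemma~\ref{lem:first}(e) this gives
\[
r=\tfrac1{24}+\tfrac{C^2}{2d}=\tfrac1{24}+\tfrac{B^2}{2a}.
\]
More generally, the identity $X^3f_r(X,1/X)\equiv0$ yields seven coefficient equations in $(a,b,d,B,C,r)$. Adding these to the ideal before eliminating $B,C,r$ should cut the locus down to the stated component.

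The main obstacle is this elimination step. The issue is guaranteeing that no spurious positive-parameter component survives. I expect the two conic conditions alone to leave an extra factor, and the corner equations from $XY-1\mid f_r$ to be needed to kill it.
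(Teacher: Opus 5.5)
Your translation of tangency into $(1+\Delta)^2=4ad$ and the reduction, via Lemma~\ref{lem:first}(b), of $f_r=0$ at the tangency point to $\widehat Q(\kappa_t)=0$ are correct. There is nevertheless a genuine gap: the elimination you propose as the main step cannot yield the lemma, and the part that would is left undone.

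\emph{Why the main step fails.} Carry out your own parametrization. The discriminant factors over $\Q$:
\[
(ad+b^2-1)^2-4ab^2d=\bigl(ad-(1+b)^2\bigr)\bigl(ad-(1-b)^2\bigr).
\]
Put $t=\sqrt{ad}=(\Delta+1)/2\in\Q$ and $u=a/t$. The sheets allowed by $\Delta>0$ are $b=t-1$ and $b=1-t$. On both, the tangency point is $X=-u$, that is, $\kappa_t=-u$, respectively $\kappa_t=u$.
\begin{itemize}
\item On the sheet $b=1-t$ one has $\widehat Q(u)=0$ identically in $(t,u)$. So your $E$ is divisible by $ad-(1-b)^2$, and the two conic conditions define a surface, not a curve. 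The target $a=d$, $b=1-a$ is not a component of their common locus; it is a curve lying inside that surface.
\item On the sheet $b=t-1$ one gets $\widehat Q(-u)=2u(u-1)$, which leaves the curve $a=d=1+b$. This curve has $\Delta=2a-1>0$, has infinitely many rational points, and is invariant under $a\leftrightarrow d$. So positivity, rationality and exchange symmetry all fail to remove it, yet it violates the conclusion.
\end{itemize}

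\emph{The fallback.} The whole content of the lemma therefore lies in your fallback, whose outcome you only predict. The corner equations you single out cannot help: $r=\frac1{24}+\frac{C^2}{2d}=\frac1{24}+\frac{B^2}{2a}$ is solvable in $(B,C,r)$ for every $(a,b,d)$. The full condition $X^3f_r(X,1/X)\equiv0$ is a correct encoding of divisibility. However, you give no reason why eliminating $B,C,r$ from it lands on the claimed locus.

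\emph{The missing idea} is local at the tangency point. Once $\kappa_t$ is a root of $\widehat Q$, $f_r|_{XY=1}$ automatically vanishes there to order two. The next coefficient is the value of $X^3f_r(X,X^{-1})/(X+u)^2$ at $X=-u$, and it is independent of $B,C,r$, for two reasons:
\begin{itemize}
\item the $B,C$-dependent terms of \eqref{eq:R1} that are linear in $D$ carry the factor $\widehat Q(\kappa)$ on the conic;
\item $rD^3$ vanishes to order six along the tangent curve.
\end{itemize}
The paper evaluates this single number on each sheet. On $b=1-t$ it equals $-\frac32t^2(t-1)(u-1)^2$, which forces $u=1$, i.e.\ $a=d$, $b=1-a$. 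On the residual curve $b=t-1$, $u=1$ it equals $-\frac56(t-1)^3\ne0$, so $XY-1\nmid f_r$ there. This avoids any elimination in $B,C,r$.
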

\begin{proof}
On $XY=1$ the conic becomes $q(X)=dX^2+(\Delta+1)X+a$. A double root means $t:=\sqrt{ad}=(\Delta+1)/2\in\Q$; with
$u=a/t$ one gets $a=tu$, $d=t/u$, $b=\pm(t-1)$, $t>\frac12$, $t\ne1$, and the double root is $X=-u$. In the
parametrization $X=-a-\kappa b$ of the conic, for $b=t-1$ the root has $\kappa=-u$ and $\widehat Q(-u)=2u(u-1)$, so the
conic condition forces $u=1$; then $X^3f_r(X,X^{-1})/(X+1)^2$ takes the value $-\frac56(t-1)^3\ne0$ at $X=-1$, so $XY-1\nmid
f_r$. For $b=1-t$ one has $\widehat Q(u)=0$, and $X^3f_r(X,X^{-1})/(X+u)^2$ takes the value $-\frac32t^2(t-1)(u-1)^2$ at
$X=-u$, forcing $u=1$, i.e.\ $a=d=t$, $b=1-t$. Both values are independent of $B,C,r$ (\texttt{m1/fix\_tangent.py}).
\end{proof}

\begin{proposition}\label{prop:ord22}
Let $m=1$, $a,d>0$, $\Delta>0$, $b\ne0$ and $a,d\ne1$. There is no $G\in\Q[X,Y]$ of bidegree $(2,2)$ dividing both $f_r$ and
$\Psi_2$ whose bihomogenization is nonzero on each of the six lines $X,Y\in\{0,\infty,-1\}$ and whose restriction to each
of them has two distinct roots in $\{0,\infty,-1\}$.
\end{proposition}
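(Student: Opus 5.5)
The plan is to make the hypothesis rigid enough that $G$ is pinned down up to finitely many discrete choices, and then to eliminate. A bidegree-$(2,2)$ polynomial whose restriction to each of the six lines $X,Y\in\{0,\infty,-1\}$ is nonzero, of full degree $2$, with two distinct roots in $\{0,\infty,-1\}$, is determined up to the $9$ coefficients $g_{ik}$ of $X^iY^k$ by line data. On $Y=0$ the roots are a $2$-subset of $\{0,-1,\infty\}$, i.e.\ $G(X,0)$ is one of $X(X+1)$, $X$, $X+1$ (root $\infty$ means degree drop). The same holds on $Y=\infty$ (top coefficient in $Y$), on $Y=-1$, and on the three vertical lines. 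There are $3^6=729$ root patterns. Each pattern is a set of linear conditions on $(g_{ik})$. I would first solve these linear systems exactly and discard patterns whose kernel is zero, or whose every element is reducible (a common factor of the solution space) or vanishes on a line. I expect a small number of surviving families, typically of the shape $\alpha XY(X+1)(Y+1)+\dots$ with one or two free ratios.

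Next, the parameter conditions. Every root $z\in\{0,-1\}$ used once on a line $L$ imposes $H_L(z)=0$, since $G\mid f_r$ and the restriction of $f_r$ to $L$ is a nonzero multiple of $H_L$. Each root $\infty$ imposes vanishing of the cubic coefficient, i.e.\ $\rho_L=\frac1{24}$. This gives up to twelve polynomial equations in $(a,b,d,B,C,r)$ from the table of $(A_L,\beta_L,\rho_L)$. Since $G\mid\Psi_2$ as well, each restriction also divides $N_2(A_L,\beta_L;\cdot)$. At $Z=0,-1$ this is automatic, but at $\infty$ it forces the Bernoulli corner $B_3(\beta_L+A_L/2)=0$. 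Also, $Z=-1$ used as a root is free for $N_2$ but, via $H_L(-1)$, pins $\rho_L=N_1(A_L,\beta_L;-1)/(A_L-1)^3$ unless $A_L=1$; following the paper I would impose everything through $H_L$ and treat the strata $\Delta=a$, $\Delta=d$, $\Delta=a+d-1$ separately. Then I impose the genuine divisibility $G\mid f_r$ (remainder of $f_r$ modulo $G$ in $\Q(X)[Y]$, cleared) and $G\mid\Psi_2$, together with the conic condition of Theorem~\ref{thm:conic}. That condition says $q_\Gamma(\kappa)$, of degree at most $4$, has all roots among those of $\widehat Q$, branchwise in $(v,e)$. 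From these I would compute Gr\"obner bases in $(a,b,d,B,C,r)$ plus the free coefficient ratios of $G$, saturated by $ab\,d(a-1)(d-1)\Delta$.

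The expected outcome is that every branch is empty or forces $\Delta\le0$, $b=0$, or a non-real/irrational parameter. Where a solution survives, I would check by exact factorization that $G$ is reducible over $\Q$. Lemma~\ref{lem:tangent} is available for any branch where the conic meets a factor $XY-1$ tangentially; in bidegree $(2,2)$, though, $XY-1$ itself has the wrong bidegree and roots $\infty,0$ only once per line, so such branches should reduce to reducible $G$.

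The main obstacle is the size and conditioning of the elimination. There are hundreds of patterns, each with a two-to-three-dimensional coefficient space and six parameters, and $f_r$-divisibility by a $(2,2)$ curve produces large remainders. To keep it tractable I would impose the cheap line conditions first, since they are nearly linear in $(B,C,r)$, and use them to eliminate $B,C,r$ before adding the remainder conditions. If the symbolic Gr\"obner computation stalls, I would fall back to modular certificates. Computing the bases modulo several large primes shows the ideal is the unit ideal after saturation, and that is a valid proof of emptiness once one prime gives $1$ with a reduction that remains valid.
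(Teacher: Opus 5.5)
\textbf{Main gap: reducible $G$ are not excluded.} Your final exclusion step does not prove the statement as given. The proposition explicitly does not assume $G$ irreducible, nor that $V(G)$ meets the quadrant. Reducible $G$ with exactly the prescribed line data exist: take the product of the graphs of two M\"obius maps permuting $\{0,-1,\infty\}$ that disagree at every point. For example, $G=(XY-1)(XY+X+Y)$ has bidegree $(2,2)$ and two distinct roots in $\{0,-1,\infty\}$ on each of the six lines. Moreover, consider the family $d=a$, $b=1-a$, $C=-B$ of Theorem~\ref{thm:m1}. It satisfies all hypotheses of the proposition for $a>\frac12$, $a\ne1$, and there the factor $XY-1$ really does divide $f_r$ (for the appropriate $r$) and $\Psi_2$. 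So ``the surviving $G$ is reducible'' throws away exactly a configuration that needs an argument. Your remark that branches involving $XY-1$ ``should reduce to reducible $G$'' is this error. What must be shown at each surviving parameter point or family is that $f_r$ and $\Psi_2$ have no common factor of positive $Y$-degree at all. That is what the paper does. At $P_1,\dots,P_5$ it shows $\operatorname{Res}_Y(F(1,Y),P(1,Y))\not\equiv0\pmod{101}$ with the full $Y$-degrees retained. For the families $A$, $B$ it shows $\operatorname{Res}_q(R_1,R_2)\not\equiv0\pmod{101}$.

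\textbf{Two further errors, and a missed simplification.} First, a simple root at $\infty$ on a line imposes nothing coming from $\Psi_2$. Each line restriction of $\Psi_2$ is a multiple of $N_2$, of degree at most $5$, while $\bar\Psi_2$ has degree $6$ on each line. So $0,-1,\infty$ are all automatic simple zeros. The condition $B_3(\beta_L+A_L/2)=0$ would be forced only by a double root at $\infty$. Adding it overconstrains your systems, and emptiness of an overconstrained system proves nothing. In fact all grid information comes from $f_r$.

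Second, a Gr\"obner basis equal to $\{1\}$ modulo a prime does not show that the rational ideal is the unit ideal; think of $(px-1)$. A valid modular certificate, as in the paper, shows that a specific integer is nonzero because its residue is. Such an integer can be a resultant of primitive integer polynomials whose degrees survive reduction.

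Finally, your $3^6=729$ patterns collapse to six. The six lines meet in nine grid points, and each line contains exactly two zeros of $G$, so the non-zeros form a permutation matrix. The root conditions from $f_r$ are then just the vanishing of the grid values $V_{ij}=p_{ij}E_{ij}$, so your twelve equations are really six. This is what lets the paper settle the six patterns by hand, on and off the strata $(\ast)$, before any divisibility computation.
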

Irreducibility and positivity are not assumed.
\begin{proof}
Index $0,\infty,-1$ by $0,1,2$. Each row and column of the $3\times3$ grid contains exactly two zeros of $G$, so the
missing points are $(i,\pi(i))$ for a permutation $\pi$, written as the word $\pi(0)\pi(1)\pi(2)$. Exchanging $X,Y$
interchanges $120$ and $201$ and fixes the other four patterns. Let $V_{ij}$ be the value at the grid point $(i,j)$ of
the bidegree-$(3,3)$ homogenization of $24f_r$, with homogeneous coordinates $(1,0),(0,1),(1,-1)$ for $0,\infty,-1$.
Then $V_{ij}=p_{ij}E_{ij}$ with
\[
(p_{ij})=\begin{pmatrix}-12\Delta^2&-a^2&-(\Delta-a)^2\\-d^2&-2&-3(d-1)^2\\-(\Delta-d)^2&-3(a-1)^2&-(\Delta-a-d+1)^2\end{pmatrix}
\]
and the cores
\begin{gather*}
E_{00}=-B^2d+2BCb-C^2a+2r\Delta,\qquad E_{11}=12r-1,\\
E_{01}=-12B^2+24ar-a,\qquad E_{10}=-12C^2+24dr-d,\\
E_{12}=-4C^2+4C+8dr-d-8r,\qquad E_{21}=-4B^2+4B+8ar-a-8r,\\
E_{02}=12E_{00}+12B^2-12Bb+12aC-2\Delta-a(24r+1),\\ E_{20}=12E_{00}+12C^2-12Cb+12dB-2\Delta-d(24r+1),\\
E_{22}=E_{02}+E_{20}-12E_{00}-12(B+C)+2(a+d)+6b+24r+2
\end{gather*}
(an identity in all parameters, checked symbolically). Divisibility requires $p_{ij}E_{ij}=0$ for $j\ne\pi(i)$, and a
core may be set to zero only where its prefactor is nonzero. The prefactors vanish only on
\[
\Delta=a,\qquad \Delta=d,\qquad \Delta=a+d-1. \tag{$\ast$}
\]
We use the rank-one facts: for $A\ne0,1$, roots $\{0,\infty\}$ mean $A=12\beta^2$; roots $\{-1,\infty\}$ mean
$A=1-12(\beta-\frac12)^2$; roots $\{0,-1\}$ mean $(6\beta-3A)^2=3A(A-1)$. So for positive $A\ne1$ the pair $\{-1,\infty\}$
forces $A<1$ and $\{0,-1\}$ forces $A>1$; at $A=1$ the pair $\{0,\infty\}$ would need $\beta^2=\frac1{12}$. The exponents
are those of the six-line table.

\emph{The stratum $\Delta=a$} (and $\Delta=d$ by exchange). Here $d=1+b^2/a>1$. On $X=0$ the exponent is $1$, so the
roots include $-1$; on $X=\infty$ it is $d>1$, so they include $0$. Only $012$, $021$, $120$ remain. For $120$ the top
pair $\{-1,\infty\}$ gives $a<1$, while the exponent $(a-d)/(a-1)>1$ on $X=-1$ also carries $\{-1,\infty\}$. For $012$
the top and right pairs give $a,d>1$, the bottom pair gives $a/d\le1$ and the pair $\{0,\infty\}$ on $X=-1$ gives
$(a-d)/(a-1)\ge0$; so $a=d$, $b^2=a(a-1)$, and the top equation becomes $(6B-3a)^2=3b^2$, impossible. For $021$ one has
$a=12B^2$, $d=12C^2$, $r=\frac1{12}$; with $w=b-12BC$, the conditions at $(-1,0)$ and $(-1,-1)$ reduce, for $a\ne d$, to
\[
3B^2(2B-1)^2+C^2(24B^2-12B+1)=0,\qquad (3B-1)(2B+2C-1)(12BC-2B-4C+1)=0,
\]
whose resultant in $C$ is $(2B-1)^4(3B-1)^2(6B-1)^3(72B^3-36B^2+6B-1)$; the cubic has no rational root, since three
times it is $(6B-1)^3-2$. For $a=d$ one has $C=\pm B$ and the remaining factors have no admissible rational zeros. What
survives is the points $P_1,P_2,P_2^{\rm op},P_3$ below.

\emph{The stratum $\Delta=a+d-1$.} Now $b^2=(a-1)(d-1)$ and both exponents on $X,Y=-1$ equal $1$, so $021$, $120$, $201$,
$210$ remain. For $120$ the top pair gives $a<1$ and the left pair $\Delta/a>1$, so $d>1$, contradicting $b^2>0$;
$201$ is symmetric. For $021$ the conditions at $(0,-1)$ and $(-1,0)$ give $(2C-1)w=2B$ and $(2B-1)w=2C$, hence $B=C$ and
$b=\pm(12B^2-1)$; the sign $+$ gives $B=\frac14$, the sign $-$ gives $48B^3-24B^2+1=0$, whose reciprocal is Eisenstein at
$3$. This leaves $P_4$. For $210$ one has $a,d>1$ and, with $a=\lambda B^2$, $d=\lambda C^2$,
\[
2\lambda^2B^2-12\lambda B+\lambda+12=0=2\lambda^2C^2-12\lambda C+\lambda+12,
\]
so $B=C$ or $B+C=6/\lambda$. If $B=C$, then $a=d$, $b=\pm(a-1)$, and $E_{00}=0$ gives $P_5$ (for $b=1-a$ it gives
$4a^3-6a+3=0$, Eisenstein at $3$). If $S:=B+C=6/\lambda$, then
\[
BC=\tfrac{S(2S+1)}{12},\quad a+d=4S-1,\quad ad=(S+\tfrac12)^2,\quad r=\tfrac{2S+1}{24},\quad
E_{00}=-\tfrac{(2S+1)(2b-2S-1)(2b-2S+1)}{48},
\]
so $b=S-\frac12$ and $\Delta=2S$; the stratum gives $\Delta=4S-2$, hence $S=1$ and $B=C=\frac12$, a contradiction.

\emph{Off the strata $(\ast)$}, all present cores vanish. Pattern $102$: the top and right pairs give $a,d<1$, but the
bottom pair $\{0,-1\}$ needs $\Delta/d>1$, while $\Delta/d<a<1$. Pattern $120$ (and $201$): here $r=\frac1{12}$,
$a=-12B^2+12B-2$, $d=12C^2$; $E_{02}=0$ gives $b=(2B+2C-1)(12BC-4B-2C+1)/(2B(2C-1))$, and then
\[
E_{00}=-\frac{FT}{24B^2(2C-1)^2},\qquad E_{22}=\frac{HT}{2B^2(2C-1)^2}
\]
with $T=4MC^2+(-24B^2+20B-4)C+(2B-1)^2$, $M=12B^2-6B+1$, and quadratics $F,H$ in $C$; since
$\operatorname{disc}_C T=-48B^2(2B-1)^2<0$ and $\operatorname{Res}_C(F,H)=9216B^6M>0$, there is no real solution. Pattern $012$: $a,d>1$;
if $B=C$, then $E_{02}=0$ is a quadratic in $b$ whose discriminant, modulo the top conic, is $-4a^3(a-1)<0$; if $B\ne C$,
then $E_{02}-E_{20}=0$ gives $b=S-\frac12$ and $\Delta=2S$, while the bottom and left pairs $\{-1,\infty\}$ need
$\Delta<\min(a,d)\le\frac{a+d}2=2S-\frac12$. Pattern $021$ gives the family $A$ below, together with zeros that lie on
$(\ast)$. Pattern $210$ gives, for $B=C$, no solution (the eliminant has resultant $-110592a^5(a-1)(a^2-a+1)\ne0$ with the
top conic), and for $B\ne C$ the family $B$ below.

\emph{Certificates.} The survivors are
\begin{center}\small
\begin{tabular}{lcccccc}
\toprule
& $a$ & $b$ & $d$ & $B$ & $C$ & $r$\\ \midrule
$P_1$ & $\frac43$ & $-\frac23$ & $\frac43$ & $\frac13$ & $\frac13$ & $\frac1{12}$\\
$P_2$ & $\frac43$ & $\frac23$ & $\frac43$ & $\frac13$ & $-\frac13$ & $\frac1{12}$\\
$P_3$ & $\frac13$ & $-\frac13$ & $\frac43$ & $\frac16$ & $\frac13$ & $\frac1{12}$\\
$P_4$ & $\frac34$ & $-\frac14$ & $\frac34$ & $\frac14$ & $\frac14$ & $\frac1{12}$\\
$P_5$ & $\frac32$ & $\frac12$ & $\frac32$ & $\frac12$ & $\frac12$ & $\frac18$\\ \bottomrule
\end{tabular}
\end{center}
their images under exchange, and the rational families
\begin{align*}
A:&\ (a,b,d,B,C;r)=\bigl(12(\tfrac12-q)^2,\,-12q^2+6q-1,\,12q^2,\,\tfrac12-q,\,q;\,\tfrac1{12}\bigr),\\
B:&\ (a,b,d,B,C;r)=\Bigl(-\tfrac{(q-6)^2}{2h},\,-\tfrac{h+6}{2h},\,-\tfrac{q^2}{2h},\,\tfrac{q-6}{2h},\,-\tfrac{q}{2h};\,\tfrac{h-6}{24h}\Bigr),\quad h=q^2-6q+6.
\end{align*}
Let $F,P$ be $f_r,\Psi_2$ (for $B$, $h^3f_r$ and $h^6\Psi_2$), scaled by rational constants to primitive integer
polynomials. At each point, $\operatorname{Res}_Y(F(1,Y),P(1,Y))\equiv82,15,15,56,45\pmod{101}$ for $P_1,\dots,P_5$, with the full
$Y$-degrees $3,6$ retained, so $f_r$ and $\Psi_2$ have no common factor of positive $Y$-degree there. For each family let
$R_s(q)$ be the primitive part of $\operatorname{Res}_Y(F(s,Y,q),P(s,Y,q))$, $s=1,2$; both have degree $72$, also modulo $101$, and
$\operatorname{Res}_q(R_1,R_2)\equiv24$ (family $A$) and $52$ (family $B$) $\pmod{101}$. A common factor of positive $Y$-degree at some
$q_0$ would make both specialized Sylvester determinants vanish, also when the affine degree drops, so $R_1(q_0)=R_2(q_0)=0$,
which is impossible. The case analysis agrees with an independent computation of the same systems by Gr\"obner bases,
which finds exactly these points and families (\texttt{m1/fix\_ord22p.py}, \texttt{m1/fix\_points.py}); the grid identity
and all certificates are reproduced by \texttt{m1/ord22\_certificates.py}.
\end{proof}

\begin{itemize}
\item \emph{Only simple roots in $\{0,-1,\infty\}$.} Counting incidences with the $3\times3$ grid forces $w=h$; there are
six patterns of bidegree $(1,1)$ and six of bidegree $(2,2)$. For $(1,1)$, $G$ is the graph of a M\"obius map
permuting $\{0,-1,\infty\}$, and only $Y=X$ and $XY=1$ meet the quadrant. The line $\{X=Y\}$ meets the conic in two
distinct points (the discriminant is $(a-d)^2+4b^2>0$), so by Theorem~\ref{thm:conic} both are roots of $\widehat Q$;
this leaves only families with $\Delta=0$ or Zagier's matrix, where $\Psi_2(X,X)\not\equiv0$. For $XY=1$ the same
condition with distinct points leaves $d=a$, $b=1-a$ or $ad=b^2+b+1$, and Lemma~\ref{lem:tangent} covers tangency; the
diagonal corners then fix the shifts, giving exactly the family in the theorem. The six patterns of bidegree $(2,2)$ are excluded by Proposition~\ref{prop:ord22}.
\item \emph{Some special root} ($b\ne0$, $a,d\ne1$). There are $450$ patterns. The systems are inconsistent, or their
parameter ideals have no admissible rational points (circles $x^2+y^2=\frac1{12}$; conditions such as $b^2+1=12C^2$,
which has no rational solution because $3$ is not a sum of two rational squares; sign conditions; zero-dimensional
primes of degree $>1$), or finitely many rational points where $\gcd(\Psi_2,\Psi_3)=1$. Six M\"obius curves meeting the
quadrant are excluded by the exponent conditions together with the conic condition in the form $\widehat Q^2\equiv0\bmod
N_G$, which is valid also at tangency. Three curves using the root $1$ on $X=0$ and $-2$ on $X=-1$ force
$\Delta/a=1$ and $(\Delta-d)/(a-1)=0$, hence $\Delta=a=d$ and exponent $0$ on $Y=-1$, contradicting their root data there.
\item \emph{$b=0$.} All six pairs reduce to $(a,B)$ and $(d,C)$; there are $217$ patterns. The systems are
inconsistent, or become so when $\operatorname{Res}_X(G,\Psi_2)\equiv0$ is added, or (for six coefficient families) their
parameter ideals contain $a^2-a+1$. For $XY=1$ divisibility at $X=-a$ gives $(1-ad)^6N_2(a,B;-a)=0$, so $ad=1$, and
then $\Psi_2,\Psi_3$ are not both divisible.
\item \emph{$a=1$, $b\ne0$.} On $X=-1$, $\Psi_2,\Psi_3,\Psi_4$ restrict to constants times $Y(1+Y)$,
$Y(1+Y)(2Y+1)$, $Y(1+Y)(6Y^2+6Y+1)$ (\texttt{deg\_a1.py}), and $f_r$ to a polynomial of degree exactly $1$: its
$Y$-coefficient is a nonzero constant times $b^4(12z^2-12bz+2b^2)$ with $z=B-\frac12$, which would vanish only for $z/b=(3\pm\sqrt3)/6$.
So $G$ has at most one finite root there, in $\{0,-1\}$, or $-\frac12$ when $\Psi_2(-1,\cdot)\equiv0$. With this
alphabet there are $319$ patterns, all excluded (inconsistency, exact reducibility, one-sign coefficients,
divisibility, rationality, or signs: on a line, roots $\{0,-1\}$ need $A\notin(0,1)$, since the conic has discriminant
$48A(A-1)$ in $\beta$, while $A_{\rm b}=1-b^2/d\in(0,1)$). The case $d=1$ follows by the exchange symmetry, verified
for $m=1$ symbolically for $R_1,\Psi_2,\Psi_3$ (\texttt{sym\_check.py}).
\item \emph{$a=d=1$, $0<|b|<1$.} There are $31$ patterns, all excluded; the last two by the identities
\[
144(b^2-1)^2\bigl(C^2-C+\tfrac14\bigr)^2+4b^4=0,\qquad 144(b^2-1)^2\bigl(C^2-bC+\tfrac{2b^2-1}{12}\bigr)^2+4b^4=0,
\]
which are the eliminants of their parameter ideals and have no real solution with $b\ne0$.
\item \emph{Lines $X=X_0$, $Y=Y_0$.} A rational line through the quadrant needs $X_0=1$ (so $a=1$ and the bottom
pair has $A=1$, impossible since $A_{\rm b}<1$), and a pair of conjugate lines needs $a=A_{\rm b}=2$ or $\frac12$, which
gives $b=0$, where $\Psi_3$ restricts to a nonzero multiple of $N_3(d,C;\cdot)$.
\end{itemize}
The complete case table, with the parameter conditions and the reason for each exclusion, the \textsc{Singular}
inputs and outputs, and the scripts are in the ancillary directory \texttt{code/m1/} (\texttt{CASES.md}). This
completes the proof of Theorem~\ref{thm:m1}.

\section{Two exceptional families}\label{sec:families}
Section~\ref{sec:count} exhibits parameters for which the residual intersection count for $(f_r,\Psi_2)$ is $24$. On
the two families considered here, the Nahm equations force the distinguished point onto $x=y$ or $x+y=1$, and
additional constraints give much smaller field degrees. The extra input is the modularity-to-torsion theorem
\cite{CGZ,Miz}: if $q^cF$ is modular, then, after clearing denominators, a positive integral multiple of
$[x]+m^{-1}[y]$ is a torsion element of the Bloch group $B(K)$. Consequently, for every embedding
$\sigma:K\hookrightarrow\C$
\begin{equation}\label{eq:regulator}
 m\,\mathcal D(\sigma x)+\mathcal D(\sigma y)=0,
\end{equation}
where $\mathcal D$ is the Bloch--Wigner dilogarithm. We use three standard properties \cite{Z}: $\mathcal D(z)>0$
for $\operatorname{Im}z>0$, $\mathcal D(\bar z)=-\mathcal D(z)$, and $\mathcal D(1-z)=-\mathcal D(z)$. In particular
$\mathcal D(z)=0$ exactly when $z$ is real.

\begin{lemma}[totally real rank-one points]\label{lem:tr1}
Let $t\in(0,1)$ be algebraic with $1-t=t^\eta$ for some $\eta\in\Q_{>0}$, and suppose that every conjugate of $t$ is
real. Then $(\eta,t)$ is $(1,\frac12)$, $(2,\frac{\sqrt5-1}2)$ or $(\frac12,\frac{3-\sqrt5}2)$; in particular
$[\Q(t):\Q]\le2$.
\end{lemma}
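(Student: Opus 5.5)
The plan is to use that every conjugate of $t$ is a real root of one fixed polynomial equation, count those real roots, and so reduce to $t$ rational or quadratic. Write $\eta=p/q$ with $p,q\ge1$ coprime. Since $t>0$ and $1-t>0$, the relation $1-t=t^\eta$ is equivalent to $t^p=(1-t)^q$. Hence $t$ is a root of $\Phi(z)=z^p-(1-z)^q\in\Q[z]$, and every conjugate of $t$ is a root of $\Phi$ as well. Because all conjugates are real by hypothesis, $[\Q(t):\Q]$ is at most the number of real roots of $\Phi$.

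\emph{Step 1: count the real roots of $\Phi$.} On $(0,1)$ the function $z^p-(1-z)^q$ is strictly increasing, so it has exactly one root there, namely $t$. The points $z=0,1$ are not roots. For $z>1$, a root would need $z^p=(1-z)^q$ with $z^p>0$, so $q$ is even. For $z<0$ a root needs $p$ even. Since $p$ and $q$ are coprime, at most one of them is even.
\begin{itemize}
\item If $p,q$ are both odd, $t$ is the only real root, so $t\in\Q$.
\item If $p$ is even ($q$ odd), the extra real roots lie in $z<0$. There $|z|^p=(1-z)^q$, and taking logarithms, $p\log|z|-q\log(1-z)$ is strictly monotone on that half-line. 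So there is at most one negative root, and $[\Q(t):\Q]\le2$, with the conjugate negative.
\item If $q$ is even, the case is symmetric under $z\mapsto1-z$, $\eta\mapsto1/\eta$.
\end{itemize}

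\emph{Step 2: the rational case.} Write $t=u/v$ in lowest terms. The equation becomes $u^pv^q=(v-u)^qv^p$. Since $\gcd(u,v-u)=\gcd(v,v-u)=1$, comparing prime factors (say $p\le q$, so $u^pv^{q-p}=(v-u)^q$) forces $u=1$ and $v-1=1$. This gives $t=\frac12$ and then $p=q$, i.e.\ $\eta=1$. The case $p>q$ is the same with the roles exchanged.

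\emph{Step 3: the quadratic case.} First show that $t$ and $1-t$ are units. For a finite place $v$ of $\Q(t)$ we have $p\,v(t)=q\,v(1-t)$.
\begin{itemize}
\item If $v(t)>0$, then $v(1-t)=0$, which contradicts $p\,v(t)=q\,v(1-t)$.
\item If $v(t)<0$, then $v(1-t)=v(t)$, forcing $p=q$; this is the already settled case $\eta=1$.
\item If $v(t)=0$, then $v(1-t)=0$ as well.
\end{itemize}
So $t$ and $1-t$ are both units of norm $\pm1$. With $N=N(t)$ and $T=\operatorname{Tr}(t)$ we get $N(1-t)=1-T+N=\pm1$. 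This leaves the minimal polynomials $z^2-Tz+N$ with $N=\pm1$ and $T\in\{-1,1,3\}$ (besides degenerate ones). Discarding those with non-real or rational roots leaves $z^2+z-1$, $z^2-3z+1$ (and $z^2-z-1$, whose roots are not in $(0,1)$ with a positive $\eta$). For their roots in $(0,1)$, namely $\varphi^{-1}=\frac{\sqrt5-1}2$ and $\varphi^{-2}=\frac{3-\sqrt5}2$, we have $1-\varphi^{-1}=\varphi^{-2}$ and $1-\varphi^{-2}=\varphi^{-1}$. These give $\eta=2$ and $\eta=\frac12$ respectively.

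The main obstacle is Step 3. One must handle the unit-equation classification cleanly, including the sign choices of the norms, and check that no further quadratic solutions survive. The real-root count of Step 1 is routine once it is organised by the parities of $p$ and $q$.
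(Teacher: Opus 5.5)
Your route is the paper's: count the real roots of $z^p-(1-z)^q$, then use that $t$ and $1-t$ are units to force $N=\pm1$ and $1-T+N=\pm1$ for the quadratic minimal polynomial. There are two small differences. The paper gets the unit property from $P$ being monic up to sign with $P(0)=-1$, $P(1)=1$ when $p\ne q$, and uses that same fact to exclude rational $t$. Your valuation argument and your unique-factorization argument in Step~2 are valid substitutes for these.

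The one step that fails as written is the monotonicity claim in Step~1. With $v=-z>0$, the derivative of $p\log v-q\log(1+v)$ is $\frac{p+(p-q)v}{v(1+v)}$. When $p<q$ this changes sign at $v=p/(q-p)$. For example, at $\eta=\frac23$ the function $2\log v-3\log(1+v)$ increases on $(0,2)$ and decreases on $(2,\infty)$.

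So for $p<q$ your argument only bounds the number of negative roots by two. That gives $[\Q(t):\Q]\le3$, and Step~3, which treats only quadratics, would leave a cubic case uncovered. The repair is the split the paper makes:
\begin{itemize}
\item If $p\le q$ there is no negative root at all, since $v^p<(1+v)^p\le(1+v)^q$.
\item If $p>q$ the derivative above is positive, so there is exactly one negative root.
\end{itemize}
The case $q$ even then follows by your symmetry $z\mapsto1-z$, and the same correction applies there.

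A minor point in Step~3: the norm conditions give exactly $(T,N)\in\{(1,1),(3,1),(1,-1),(-1,-1)\}$. Your description ``$N=\pm1$, $T\in\{-1,1,3\}$'' also admits $z^2-3z-1$ and $z^2+z+1$. Both violate $1-T+N=\pm1$, so the conclusion is unaffected.
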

\begin{proof}
Write $\eta=p/q$ in lowest terms. Every conjugate of $t$ is a root of $P(T)=T^p-(1-T)^q$, which has exactly one root
in $(0,1)$. A root $T=-v<0$ requires $p$ even and $q$ odd, and then $v^p=(1+v)^q$: there is no solution if $p\le q$,
and exactly one if $p>q$, since $v^p/(1+v)^q$ increases strictly from $0$ to $\infty$. By the symmetry
$T\mapsto1-T$, $(p,q)\mapsto(q,p)$, a root $T>1$ requires $q$ even and $q>p$, and then there is exactly one. The two
outside cases exclude each other, so $P$ has at most two distinct real roots and $[\Q(t):\Q]\le2$. If $p=q$ then
$p=q=1$ and $t=\frac12$. Otherwise $P$ has leading coefficient $\pm1$ and $|P(0)|=|P(1)|=1$, so $t$ and $1-t$ are
units, which excludes $t\in\Q$. For $[\Q(t):\Q]=2$ the minimal polynomial $T^2-uT+n$ has $n=\pm1$ and
$1-u+n=\pm1$; the only cases with a root in $(0,1)$ and real roots are $T^2+T-1$ and $T^2-3T+1$, with
$\eta=2$ and $\eta=\frac12$ respectively, since $\eta$ is determined by $t$.
\end{proof}

\begin{theorem}[equal row sums]\label{thm:equalrows}
Suppose $a+b=mb+d=:\eta$. If $q^cF$ is modular, then $\eta\in\{\frac12,1,2\}$ and $[K:\Q]\le2$.
\end{theorem}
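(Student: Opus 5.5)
Under the hypothesis $a+b=mb+d=\eta$, the plan is first to show that the Nahm equations force $x=y$, then to use the regulator relation \eqref{eq:regulator} to show that $x$ is totally real, and finally to apply Lemma~\ref{lem:tr1}.

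\emph{Step 1: reduction to the diagonal.} On the diagonal $x=y=t$ the Nahm equations read $1-t=t^{a+b}$ and $1-t=t^{mb+d}$. With equal row sums these are the single equation $1-t=t^\eta$, which has exactly one root in $(0,1)$. Uniqueness of the Nahm point in $(0,1)^2$ then gives $x=y=t$. Uniqueness holds because the Nahm point is the critical point of a strictly convex function, since $AD$ is positive definite. Hence $K=\Q(t)$.

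\emph{Step 2: total reality.} By \cite{CGZ,Miz}, relation \eqref{eq:regulator} holds for every embedding $\sigma$. Since $x=y$, it becomes $(m+1)\mathcal D(\sigma t)=0$. As $m+1>0$, we get $\mathcal D(\sigma t)=0$, so $\sigma t$ is real, because $\mathcal D$ vanishes exactly on the reals. Thus every conjugate of $t$ is real.

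\emph{Step 3: conclusion.} The element $t\in(0,1)$ is algebraic because $\eta\in\Q$, and $\eta>0$. Positivity of $\eta$ needs a short argument: with $x=y$, the vector $(1,1)$ satisfies $A(1,1)^{\mathsf T}=\eta(1,1)^{\mathsf T}$. Pairing this with $D(1,1)^{\mathsf T}$ gives $(1,1)\,DA\,(1,1)^{\mathsf T}=\eta(1+m)$. The left side is positive because $DA=(AD)^{\mathsf T}$ is positive definite, so $\eta>0$. Lemma~\ref{lem:tr1} then gives $(\eta,t)\in\{(1,\frac12),(2,\frac{\sqrt5-1}2),(\frac12,\frac{3-\sqrt5}2)\}$. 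Hence $\eta\in\{\frac12,1,2\}$ and $[K:\Q]=[\Q(t):\Q]\le2$.

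The main obstacle is Step 2: it depends on the exact normalization of the torsion element for a general symmetrizer, namely that the combination is $[x]+m^{-1}[y]$ with both coefficients of the same sign. The rest is elementary.
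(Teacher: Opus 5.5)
Your overall route is the paper's. Equal row sums make $(1,1)^{\mathsf T}$ an eigenvector, and uniqueness of the Nahm point puts it on the diagonal. Then \eqref{eq:regulator} collapses to $(m+1)\mathcal D(\sigma t)=0$, and Lemma~\ref{lem:tr1} finishes. Your convexity justification of uniqueness is fine. The normalization $[x]+m^{-1}[y]$ that worries you in Step~2 is exactly the input the paper takes from \cite{CGZ,Miz}, so that step is not an obstacle.

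The step that fails as written is your proof that $\eta>0$. You claim $DA=(AD)^{\mathsf T}$, but $(AD)^{\mathsf T}=DA^{\mathsf T}$, which equals $AD$ (a symmetric matrix), not $DA$. For $m\ne1$, $b\ne0$ the matrix $DA=\bigl(\begin{smallmatrix}a&b\\ m^2b&md\end{smallmatrix}\bigr)$ is not symmetric, and its quadratic form need not be positive. For example, $m=4$, $a=d=1$, $b=-\frac25$ gives $\Delta=\frac9{25}>0$, yet $(1,1)DA(1,1)^{\mathsf T}=a+(1+m^2)b+md=-\frac95$. So positivity of $(1,1)DA(1,1)^{\mathsf T}$ does not follow from positive definiteness of $AD$. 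Under your hypothesis it is true only because it equals $\eta(1+m)$, which is the very thing to be shown.

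The repair is to pair with $D^{-1}(1,1)^{\mathsf T}$ instead. Then $(1,1)D^{-1}A(1,1)^{\mathsf T}=\eta(1+\frac1m)$, and $D^{-1}A=D^{-1}(AD)D^{-1}$ is congruent to $AD$, hence positive definite. Equivalently, as the paper argues, $A$ is similar to the symmetric positive-definite matrix $D^{-1/2}(AD)D^{-1/2}$, so its eigenvalue $\eta$ is positive.

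Step~1 already needs $\eta>0$: for $\eta\le0$ the equation $1-t=t^\eta$ has no root in $(0,1)$. So this positivity argument should come before Step~1, not in Step~3.
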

\begin{proof}
$A$ is similar to the positive-definite matrix $\operatorname{diag}(1,m)^{-1/2}(A\operatorname{diag}(1,m))\operatorname{diag}(1,m)^{-1/2}$, so its
eigenvalue $\eta$ (eigenvector $(1,1)$) is positive, and $1-t=t^\eta$ has a unique solution $t\in(0,1)$. Since $(t,t)$
solves both Nahm equations, uniqueness of the Nahm point gives $x=y=t$. Equation \eqref{eq:regulator} becomes
$(m+1)\mathcal D(\sigma t)=0$ for every $\sigma$, so all conjugates of $t$ are real, and Lemma~\ref{lem:tr1} applies.
\end{proof}

Conversely, if $x=y=t$, the Nahm equations give $1-t=t^{a+b}=t^{mb+d}$. Since $0<t<1$, comparison of logarithms
gives $a+b=mb+d$. Thus
\[
 x=y\quad\Longleftrightarrow\quad a-d+(1-m)b=0.
\]

\begin{theorem}[complementary coordinates, $m\ne1$]\label{thm:compl}
Suppose $x+y=1$, equivalently $\Delta+(m+1)b-1=0$. If $m\ne1$ and $q^cF$ is modular, then $[K:\Q]\le2$.
\end{theorem}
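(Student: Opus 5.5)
The plan is to combine the regulator identity \eqref{eq:regulator} with the relation $y=1-x$, which forces $K$ to be totally real, and then to reduce the Nahm equations to a single rank-one equation $1-t=t^\eta$ so that Lemma~\ref{lem:tr1} applies. This mirrors the proof of Theorem~\ref{thm:equalrows}, with the reflection $\mathcal D(1-z)=-\mathcal D(z)$ taking the role that $x=y$ plays there.

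\emph{Step 1 (total reality).} Since $y=1-x$ holds in $K$, we have $K=\Q(x)$, and for every embedding $\sigma:K\hookrightarrow\C$ we get $\sigma y=1-\sigma x$. Modularity gives \eqref{eq:regulator} via \cite{CGZ,Miz}, and with $\mathcal D(1-z)=-\mathcal D(z)$ it reads
\[
 m\,\mathcal D(\sigma x)-\mathcal D(\sigma x)=(m-1)\,\mathcal D(\sigma x)=0 .
\]
Because $m\ne1$, this gives $\mathcal D(\sigma x)=0$ for every $\sigma$. Since $\mathcal D(z)=0$ exactly when $z$ is real, every conjugate of $x$ is real. This is the only place where $m\ne1$ is used; for $m=1$ the identity is vacuous, which matches the separate treatment of $m=1$.

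\emph{Step 2 (reduction to rank one).} Substitute $y=1-x$ into the first Nahm equation $1-x=x^a y^b$ to get $(1-x)^{1-b}=x^a$. If $b=1$, this says $x^a=1$, which is impossible for $a>0$ and $x\in(0,1)$. So $b\ne1$. Since $x^a\in(0,1)$ and $1-x\in(0,1)$, comparing logarithms forces $1-b>0$. Hence
\[
 1-x=x^{\eta},\qquad \eta:=\frac{a}{1-b}\in\Q_{>0}.
\]
The second Nahm equation is not needed beyond its consistency, which is encoded in $\Delta+(m+1)b-1=0$.

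\emph{Step 3 (conclusion).} Now $t=x\in(0,1)$ is algebraic, satisfies $1-t=t^\eta$ with $\eta\in\Q_{>0}$, and is totally real by Step~1. Lemma~\ref{lem:tr1} then gives $[\Q(x):\Q]\le2$, so $[K:\Q]\le2$. As a by-product, $\eta\in\{1,2,\frac12\}$.

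I expect no serious obstacle. The one point needing care is the sign argument in Step~2: it rules out $b\ge1$ and guarantees $\eta>0$, which Lemma~\ref{lem:tr1} requires.
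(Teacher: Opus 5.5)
Your argument for the implication itself is correct and is the paper's argument. The reflection $\mathcal D(1-z)=-\mathcal D(z)$ turns \eqref{eq:regulator} into $(m-1)\mathcal D(\sigma x)=0$, the first Nahm equation becomes $1-x=x^\eta$ with $\eta=a/(1-b)>0$, and Lemma~\ref{lem:tr1} finishes; the order of your Steps 1 and 2 is immaterial.

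What is missing is the equivalence asserted in the hypothesis. You only gesture at it (``consistency \ldots\ encoded in $\Delta+(m+1)b-1=0$'') and prove neither direction. The statement allows one to start from the parameter condition, and Corollary~\ref{cor:exclusions} uses exactly the direction from $\Delta+(m+1)b=1$ to $x+y=1$. As written, your proof says nothing if one is only given the parameter identity.

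Both directions are short:
\begin{itemize}
\item \emph{From $x+y=1$.} If $y=1-x$, the second Nahm equation becomes $x^{1-mb}=(1-x)^d$. Your sign argument gives $1-mb>0$, and comparing exponents of $x\in(0,1)$ gives $d=(1-mb)/\eta$. Hence $ad=(1-b)(1-mb)$ and $\Delta=ad-mb^2=1-(m+1)b$.
\item \emph{From $\Delta+(m+1)b=1$.} Since $\Delta>0$, we get $(m+1)b<1$, so $1-b>0$ and $1-mb>0$. With $\eta=a/(1-b)$ one has $ad=\Delta+mb^2=(1-b)(1-mb)$, i.e.\ $d=(1-mb)/\eta$. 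Then, for the root $t\in(0,1)$ of $1-t=t^\eta$, the point $(t,1-t)$ satisfies both Nahm equations. The ingredient you need here is the uniqueness of the Nahm point in $(0,1)^2$, which identifies $(t,1-t)$ with $(x,y)$.
\end{itemize}
After that, your argument applies verbatim.
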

\begin{proof}
If $x+y=1$, the Nahm equations read $(1-x)^{1-b}=x^a$ and $x^{1-mb}=(1-x)^d$. Both logarithms are negative and
$a,d>0$, so $1-b>0$ and $1-mb>0$; with $\eta:=a/(1-b)$ we get $1-x=x^\eta$, $a=\eta(1-b)$, $d=(1-mb)/\eta$, and then
$\Delta=ad-mb^2=1-(m+1)b$. Conversely, if $\Delta+(m+1)b-1=0$ then $b<1/(m+1)$, the same formula gives a
solution $(x,1-x)$ of the Nahm equations, which is the Nahm point by uniqueness. Now $\mathcal D(1-z)=-\mathcal D(z)$
turns \eqref{eq:regulator} into $(m-1)\mathcal D(\sigma x)=0$, so $x$ is totally real and Lemma~\ref{lem:tr1} applies
to $(\eta,x)$.
\end{proof}

At $m=1$ the regulator equation is identically satisfied on $x+y=1$, and the corrections take over. By
the parameter equivalence established in the proof of Theorem~\ref{thm:compl}, which also holds when $m=1$, the
matrices with complementary Nahm point in this case are exactly
\begin{equation}\label{eq:m1family}
 A=\begin{pmatrix}\eta\theta&1-\theta\\1-\theta&\theta/\eta\end{pmatrix},\qquad \eta>0,\ \theta>\tfrac12,\ \eta,\theta\in\Q,
\end{equation}
with $\theta=1-b$, $\Delta=2\theta-1$ and Nahm point $(t,1-t)$, $1-t=t^\eta$. In the coordinates of
Section~\ref{sec:first} this point is $(X,Y)=(Z,1/Z)$ with $Z=t/(1-t)$; Zagier's family is $\eta=1$.

\begin{theorem}[the unweighted complementary family]\label{thm:m1compl}
If $q^cF$ is modular for a matrix \eqref{eq:m1family}, then $[K:\Q]\le4$. More precisely: if $\eta=1$ the Nahm
point is $(\frac12,\frac12)$; if $\theta=1$ and $\eta\ne1$, then $[K:\Q]\le2$ when $B\ne\eta C$ and $[K:\Q]\le3$ when
$B=\eta C$.
\end{theorem}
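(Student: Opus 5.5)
The plan is to work entirely on the rational curve $XY=1$, which carries the Nahm point $(Z,1/Z)$ with $Z=t/(1-t)$. First I would reduce the correction equations to one-variable polynomials in $Z$, then bound the degree of their common part. Since $K=\Q(x,y)=\Q(t)=\Q(Z)$, it suffices to exhibit a nonzero polynomial in $\Q[Z]$ of degree at most $4$ (resp.\ $2$, $3$) vanishing at $Z$.

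\emph{The case $\eta=1$.} This is immediate: $1-t=t$ gives $t=\frac12$, so $(x,y)=(\frac12,\frac12)$.

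\emph{The case $\theta=1$, $\eta\ne1$.} Here $b=0$, $a=\eta$, $d=1/\eta$, and the sum decouples. We have $\lambda_1=L_{\eta,B}(X)+L_{1/\eta,C}(Y)$ and $\ell_j=\lambda^{(j)}_\eta(B;X)+\lambda^{(j)}_{1/\eta}(C;Y)$. The key input is Proposition~\ref{prop:reciprocity} specialised to $b=0$, $m=1$. There $p^*=(\frac1a,0,\frac1d,1,\frac Ba,\frac Cd)$, and applied to the one-dimensional pieces it should give
\[
\lambda^{(j)}_{1/\eta}(C;Z^{-1})=(-1)^j\lambda^{(j)}_\eta(\eta C;Z)\quad(j\ge2),\qquad L_{1/\eta,C}(Z^{-1})=\kappa-L_{\eta,\eta C}(Z),
\]
for an explicit rational constant $\kappa$. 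Hence at the Nahm point
\[
\ell_2=\lambda^{(2)}_\eta(B;Z)+\lambda^{(2)}_\eta(\eta C;Z),\qquad
\ell_3=\lambda^{(3)}_\eta(B;Z)-\lambda^{(3)}_\eta(\eta C;Z),
\]
and $\lambda_1=\kappa+L_{\eta,B}(Z)-L_{\eta,\eta C}(Z)$.

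If $B=\eta C$, the $\lambda_1$ and $\ell_3$ equations are vacuous. The $\ell_2$ equation reads $N_2(\eta,B;Z)=0$. Since $Z\ne0,-1$, this becomes $M_2(\eta,B;Z)=0$, where $M_2$ is cubic and, by Lemma~\ref{lem:euler}'s argument, not identically zero for $\eta\ne1$. This gives $[K:\Q]\le3$.

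If $B\ne\eta C$, I would use the $\lambda_1$ equation. Clearing $(\eta+Z)^3$ gives
\[
N_1(\eta,B;Z)-N_1(\eta,\eta C;Z)-(r-\kappa)(\eta+Z)^3=0 .
\]
If $r\ne\kappa$ this is a cubic, so I would combine it with $\ell_2=\ell_3=0$ via a resultant in $(\eta,B,C,r)$ to reduce to degree $\le2$. If $r=\kappa$, the $Z^3$ terms cancel (both have coefficient $\frac1{24}$) and what remains is a quadratic. By the explicit formula for $24N_1$, its $Z^2$ coefficient is $12(\beta-\beta')(\eta+\beta+\beta'-1)$ with $\beta=B$, $\beta'=\eta C$. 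So it is nonzero unless $\eta+\beta+\beta'=1$, and that subcase is handled by falling back on the next coefficient or on $\ell_2$.

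\emph{The general case.} Set $g_1(Z)=Z^3f_r(Z,Z^{-1})$, of degree $\le6$ by Lemma~\ref{lem:first}(a), and $g_2(Z)=Z^6\Psi_2(Z,Z^{-1})$, of degree $\le12$. The curve $XY=1$ passes through $(-1,-1)$, where the lines $X=-1$ and $Y=-1$ meet. I expect the six-line analysis of Section~\ref{sec:m1}, together with the rank-one factor $(1+Z)^2$ of $N_1-\rho(A+Z)^3$ at exponent $1$ (here $(\Delta-d)/(a-1)$ can equal $1$ on this family), to show that $g_1$ is divisible by $(1+Z)^2$ for these matrices. If so, $g_1/(1+Z)^2$ is a quartic vanishing at $Z$, which would give the bound $4$. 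I would verify this divisibility symbolically in $(\eta,\theta,B,C,r)$.

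When the quartic vanishes identically, $f_r$ is divisible by $XY-1$. By Theorem~\ref{thm:m1} (with the first two corrections, i.e.\ $\Psi_2$ restricted as $g_2$ modulo its forced factors) one then lands on Zagier's family $\eta=1$, which is already treated.

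\emph{Main obstacle.} I expect the main obstacle to be this last step: proving the forced factor $(1+Z)^2$ of $g_1$ uniformly, and controlling the degenerate strata where the quartic collapses. That is where the second correction $\Psi_2$ must be used. I would first try a direct symbolic factorisation of $g_1$ on the family \eqref{eq:m1family}; if that fails, I would compute $\gcd(g_1,g_2)$ over $\Q(\eta,\theta,B,C,r)$ and analyse its specialisations.
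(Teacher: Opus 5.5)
\textbf{Verdict: genuine gap.} Your cases $\eta=1$ and ($\theta=1$, $B=\eta C$) are fine. Extracting the rank-one reciprocity $\lambda^{(j)}_{1/\eta}(C;Z^{-1})=(-1)^j\lambda^{(j)}_\eta(\eta C;Z)$ from Proposition~\ref{prop:reciprocity} is a nice alternative to the paper's direct substitution. It explains why $\lambda_1$ is constant and $\ell_3\equiv0$ when $B=\eta C$. Also, $M_2(\eta,B;\cdot)\not\equiv0$ for $\eta\ne1$ does follow from the proof of Lemma~\ref{lem:euler}.

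\emph{The general case rests on a false divisibility.} For $\eta,\theta\ne1$ you need $(1+Z)^2\mid g_1$, and this fails. On the family, $D(-1,-1)=\Delta-a-d+1=-\theta(\eta-1)^2/\eta\ne0$, so the exponent $(\Delta-d)/(a-1)$ on $X=-1$ equals $1$ only for $\eta=1$. Hence $f_r(-1,-1)=R_1(-1,-1)-r\,D(-1,-1)^3$ vanishes for at most one value of $r$, and $(1+Z)\nmid g_1$ in general. Your symbolic check would fail, and the fallback through $\gcd(g_1,g_2)$ yields no degree bound.

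The missing observation is that $XY=1$ is tangent to the determinant conic: $D(Z,Z^{-1})=\theta(Z+\eta)^2/(\eta Z)$. The tangency point $(-\eta,-1/\eta)$ is one of the conic points where $\widehat Q$ vanishes (its parameter in Theorem~\ref{thm:conic} is $\eta$), so $R_1$ is small there. Accordingly $\lambda_1(Z,Z^{-1})=N(Z)/(24\theta(Z+\eta)^4)$ with $\deg N\le4$, a pole of order $4$ rather than $6$. Thus $g_1$ is a nonzero constant times $(Z+\eta)^2\bigl(N(Z)-24\theta r(Z+\eta)^4\bigr)$, and the forced double factor is $(Z+\eta)^2$. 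The quartic cofactor is nonzero because its value at $Z=-\eta$ is $N(-\eta)=-36\eta^3(\eta-1)^2(\theta-1)$, which is independent of $r$ and nonzero for $\eta,\theta\ne1$. So your degenerate case never arises. In any case Theorem~\ref{thm:m1} could not have handled it, since it requires $XY-1$ to divide $\Psi_2,\Psi_3,\Psi_4$ as well as $f_r$; for $f_r$ alone the relevant statement is Lemma~\ref{lem:tangent}.

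\emph{The same pole cancellation is missing for $\theta=1$, $B\ne\eta C$, $r\ne\kappa$} (here $\kappa$ is your reciprocity constant). Your cubic $N_1(\eta,B;Z)-N_1(\eta,\eta C;Z)-(r-\kappa)(\eta+Z)^3$ always vanishes at $Z=-\eta$, because $24N_1(A,\beta;-A)=5A^2(A-1)^2$ does not depend on $\beta$. Hence it is $(Z+\eta)$ times a quadratic with leading coefficient $-(r-\kappa)\ne0$, and the positive $Z$ is a root of that quadratic. This is the denominator $(Z+\eta)^2$ in the paper's formula for $\lambda_1(Z,Z^{-1})$ when $\theta=1$. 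A resultant with $\ell_2,\ell_3$ in the parameters cannot replace this step, since it says nothing about whether a rational cubic in $Z$ is irreducible. Your subcase $r=\kappa$ is correct as sketched: when $\eta+\beta+\beta'=1$ the difference of the two $N_1$ equals $\frac12(\beta-\beta')\eta(1-\eta)(Z+\eta)$, which has no positive root, so that subcase is empty.
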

\begin{proof}
Substituting $(a,b,d,m)=(\eta\theta,1-\theta,\theta/\eta,1)$ and $(X,Y)=(Z,1/Z)$ into $R_1$ and $D$ of
Section~\ref{sec:first} gives $D=\theta(Z+\eta)^2/(\eta Z)$ and
\[
 \lambda_1(Z,1/Z)=\frac{N(Z)}{24\theta(Z+\eta)^4},\qquad \deg_ZN\le4,\qquad
 N(-\eta)=-36\eta^3(\eta-1)^2(\theta-1),
\]
with $B,C$ symbolic (\texttt{families\_check.py}). Modularity gives $\lambda_1(Z,1/Z)=r\in\Q$, i.e.
$N(Z)-24\theta r(Z+\eta)^4=0$. If $\eta\ne1$ and $\theta\ne1$, this polynomial is nonzero at $Z=-\eta$, hence nonzero
of degree at most $4$, and $Z>0$ is a root; so $[K:\Q]=[\Q(Z):\Q]\le4$. If $\eta=1$, then $t=\frac12$.

Let $\theta=1$, $\eta\ne1$. Then
\[
 \lambda_1(Z,1/Z)=\frac{12B^2(\eta+Z)+12B(\eta-1)Z+12C^2\eta Z(\eta+Z)-12C\eta(\eta-1)Z+(Z+\eta)^2}{24(Z+\eta)^2},
\]
whose derivative in $Z$ is
$(\eta C-B)\bigl((B+\eta C)(\eta+Z)+(\eta-1)(Z-\eta)\bigr)/(2(Z+\eta)^3)$. The second factor is not identically
zero for $\eta\ne1$, since that would need $B+\eta C=\eta-1$ and $B+\eta C=1-\eta$ at once. So if $B\ne\eta C$ the
function is nonconstant, and the level equation $\lambda_1=r$ is a nonzero polynomial equation of degree at most
$2$ in $Z$. If $B=\eta C$, $\lambda_1$ is constant and we use the second correction: with $w=Z+\eta$,
\[
 \ell_2(Z,1/Z)=-\frac{\eta Z(Z+1)\,Q(Z)}{24(Z+\eta)^6},\qquad
 Q(-\eta)=-15\eta^2(\eta-1)^3\ne0,
\]
where
\begin{align*}
 Q(Z)={}&(2C+1)(2C\eta+\eta-2)(2C\eta+\eta-1)\,w^3\\
 &-(\eta-1)\bigl(24C^2\eta^2+32C\eta^2-28C\eta+11\eta^2-19\eta+6\bigr)w^2\\
 &+5\eta(\eta-1)^2(6C\eta+5\eta-4)\,w-15\eta^2(\eta-1)^3 .
\end{align*}
Since $Z(Z+1)\ne0$, modularity ($\ell_2=0$ at the Nahm point) forces $Q(Z)=0$, so $[K:\Q]\le3$.
\end{proof}

These three results use the regulator where its two terms cannot cancel, and the corrections where they do.
Together they give an explicit necessary condition in the matrix parameters.

\begin{corollary}[parameter exclusions]\label{cor:exclusions}
Suppose that $q^cF$ is modular. If $[K:\Q]\ge5$, then
\[
 \bigl(a-d+(1-m)b\bigr)\bigl(ad-mb^2+(m+1)b-1\bigr)\ne0 .
\]
Equivalently, the Nahm point satisfies $x\ne y$ and $x+y\ne1$. If $m\ne1$, the same conclusion already holds when
$[K:\Q]\ge3$.
\end{corollary}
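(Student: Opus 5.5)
The corollary is the contrapositive of Theorems~\ref{thm:equalrows}, \ref{thm:compl} and \ref{thm:m1compl}. We combine these with the two parameter equivalences already established. It suffices to show: if the product vanishes, then $[K:\Q]\le4$, and $[K:\Q]\le2$ when $m\ne1$.

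First we translate each factor into a condition on the Nahm point. The equivalence $x=y\iff a-d+(1-m)b=0$ was proved right after Theorem~\ref{thm:equalrows}. In the proof of Theorem~\ref{thm:compl}, the derivation of $\Delta=1-(m+1)b$ from $x+y=1$ and its converse use only positivity of $a,d$ and uniqueness of the Nahm point, not $m\ne1$. That remark is made explicitly before \eqref{eq:m1family}. So $x+y=1\iff ad-mb^2+(m+1)b-1=0$ for every rational $m\ge1$. This yields the stated equivalence: the product is nonzero exactly when $x\ne y$ and $x+y\ne1$.

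Next, suppose the product vanishes. If the first factor vanishes, the rows have equal sums $a+b=mb+d$. Theorem~\ref{thm:equalrows} then gives $[K:\Q]\le2$ for every $m$. If the second factor vanishes and $m\ne1$, Theorem~\ref{thm:compl} gives $[K:\Q]\le2$. If the second factor vanishes and $m=1$, the matrix has the form \eqref{eq:m1family}. Here one must check that $\theta=1-b$ and $\eta=a/(1-b)$ are rational with $\theta>\frac12$, which follows from $\Delta=2\theta-1>0$. Theorem~\ref{thm:m1compl} then gives $[K:\Q]\le4$. In all cases $[K:\Q]\ge5$ is impossible, and for $m\ne1$ even $[K:\Q]\ge3$ is impossible.

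There is no real obstacle here. The only point needing care is that the equivalence $x+y=1\iff\Delta+(m+1)b-1=0$ remains valid at $m=1$, and that the parametrization \eqref{eq:m1family} covers every such matrix with rational $\eta,\theta$. Both were noted in the text preceding Theorem~\ref{thm:m1compl}, so I would cite them rather than reprove them.
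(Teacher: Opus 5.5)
Your proposal is correct and follows the paper's proof: you translate each factor into $x=y$ or $x+y=1$ using the equivalences recorded after Theorem~\ref{thm:equalrows} and in the proof of Theorem~\ref{thm:compl}, and then apply Theorems~\ref{thm:equalrows}, \ref{thm:compl} and \ref{thm:m1compl}. Your extra check, that for $m=1$ every such matrix has the form \eqref{eq:m1family} with rational $\eta,\theta$ and $\theta>\frac12$ (from $\Delta=2\theta-1>0$), fills in a detail the paper leaves implicit.
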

\begin{proof}
Vanishing of the first factor is equivalent to equal row sums, hence to $x=y$, and Theorem~\ref{thm:equalrows}
gives $[K:\Q]\le2$. Vanishing of the second factor is equivalent to $x+y=1$, by the proof of
Theorem~\ref{thm:compl}; then Theorem~\ref{thm:compl} gives $[K:\Q]\le2$ when $m\ne1$, and
Theorem~\ref{thm:m1compl} gives $[K:\Q]\le4$ when $m=1$.
\end{proof}

In the coordinates $X=x/(1-x)$, $Y=y/(1-y)$ of Section~\ref{sec:first}, these exclusions read $(X-Y)(XY-1)\ne0$. A
search for modular Nahm points of degree at least five may therefore saturate its defining ideal by $(X-Y)(XY-1)$.
This removes neither such a point nor any member of its Galois orbit, since the polynomial has rational coefficients
and does not vanish at the point. The saturation does not by itself give a smaller uniform degree bound.

\section*{Statements and declarations}
\paragraph{Use of AI tools.} Large language model based assistants, including Claude (Anthropic) and ChatGPT (OpenAI), were used
extensively in this work: to write and run the PARI/GP and Python scripts, to carry out and cross-check the
symbolic computations, to draft and edit the text, and to check the arguments. Several steps of the proof were
first proposed in AI-assisted notes and then verified, among them the use of the first radial coefficient
(Section~\ref{sec:first}), the Bernoulli rigidity argument with $B_2,\dots,B_5$ (Section~\ref{sec:rigidity}), the
width lemma (Lemma~\ref{lem:width}) and the ideal-power argument (Lemma~\ref{lem:ideal}). The classification for
$m=1$ (Section~\ref{sec:m1}), including Lemma~\ref{lem:tangent} and Proposition~\ref{prop:ord22}, was carried out with
AI assistance throughout, as was the pole-order argument of Section~\ref{sec:pole}, first
proposed in an AI-assisted proof supplement, whose computations were redone independently. The author directed the
work and takes full responsibility for its content. Every computational claim can be reproduced with the
ancillary files. No AI tool is an author of this paper.

\paragraph{Funding.} The author received no funding for this work.

\paragraph{Competing interests.} The author declares no competing interests.

\paragraph{Code availability.} All scripts and their output logs are available as ancillary files with the arXiv
version of this paper; Appendix~\ref{app:checks} lists which script verifies each computational claim.

\appendix
\section{Computational claims and scripts}\label{app:checks}
All scripts run in PARI/GP~2.17 via \texttt{cypari2}, those for $m=1$ also in \textsc{Singular}~4.3.2; symbolic means
all parameters are indeterminates. The $m=1$ scripts are in \texttt{code/m1/} (run from \texttt{code/}), with a
\texttt{README} recording each step.
\begin{longtable}{p{0.58\textwidth}p{0.34\textwidth}}
\toprule claim & script(s)\\ \midrule
\endhead
Formal corrections $\ell_2,\dots,\ell_5$ (separable vertices, exact Gaussian expansion) & \texttt{formal.gp} (\texttt{formalells})\\
$\Psi_2,\Psi_3$ symbolic; support, degrees & \texttt{psi\_sym.py}, \texttt{coprime3.py}\\
$\Psi_4$ symbolic; four edge identities & \texttt{sym4.py}\\
$\Psi_5$ top and right edges via series in $1/Y$, $1/X$ (precision $40$); corners & \texttt{top5b.py}\\
Rank-one $N_2,N_3,N_4$ symbolic, corners; $N_5$ corners certified on $16\times7$ grid & \texttt{vert.py}, \texttt{j4.py}, \texttt{cert5.py}\\
Rank-one conic identity $j=2$ (exact), $j=3$ (exact), $j=4$ (points) & \texttt{curve\_q.py}, \texttt{psi\_sym.py}, \texttt{l3curve.py}, \texttt{airy2.py}\\
Rank-one classification: $E$, $E_2$, $U(A)$, roots, $A=1$ factorisation & \texttt{rk1b.py}, \texttt{rk1c.py}, \texttt{rk1d.py}\\
Multiplicity of $(1+X)$: $M_j(-1)$ and the three lines & \texttt{mu.py}, \texttt{enum11.py} (first line of output)\\
$(1,1)$ configurations (49): exact positivity, all six conic branches, saturated Gr\"obner bases; routing to T1--T4 & \texttt{gate11.py}; branch $q_\Gamma=c\kappa^2$ alone: \texttt{gate11\_branchC.py}\\
Thin rectangles (38): all conic branches, saturated Gr\"obner bases; rational points; exact gcds at $P_1,P_2$ & \texttt{gate\_thin.py}, \texttt{gate\_thin\_pts.py}, \texttt{gate\_points\_final.py}, \texttt{thin3.json}\\
Families T1--T5 with $k$ symbolic & \texttt{fam5.py}, \texttt{t5.py}\\
Exchange symmetry (Lemma~\ref{lem:swap}), $j=2,3$ at $25$ random points & \texttt{exchange\_check.py}\\
Lemma~\ref{lem:worder}: conic weighted orders (symbolic $j=2$; exact $j\le5$ at three points) & \texttt{conic\_sym.py}, \texttt{conic\_order.py}\\
Section~\ref{sec:first}: formula \eqref{eq:R1} symbolic, bidegree, conic and $b=0$ restrictions, corners & \texttt{first\_coeff.py}, \texttt{first\_corners.py}\\
Section~\ref{sec:first}: numerical values of $r$ and $c$ at all $38$ known pairs (220 digits); Kanade--Russell values of $c$ & \texttt{first\_known.py}\\
Section~\ref{sec:count}: $\operatorname{Res}_Y(f_r,\Psi_2)=C_3^4R_{24}$ at $(s_1,\frac17)$ & \texttt{first\_res.py}\\
Section~\ref{sec:first}: the four edges of $f_r$ as rank-one numerators, symbolic & \texttt{fr\_edges.py}, \texttt{fr\_divides.py}\\
Lemma~\ref{lem:edgediv}, cases (i)--(iv); $\gcd(N_2,N_3)$ at the special pairs & \texttt{fr\_divides2.py}, \texttt{fr\_divides3.py}, \texttt{fr\_divides4.py}\\
Lemma~\ref{lem:rig}: $U$-forms of $B_2,\dots,B_5$, factorizations $(U-V)g$, $(U-V)h$, Case B ratios; identities \eqref{eq:B2}; corner fit of $f_r$ at all four corners & \texttt{check32.py}\\
Lemma~\ref{lem:ideal}: $\Psi_j\in\mathcal I^j$ at a simple and a double conic root on the boundary, $j\le4$ & \texttt{check32c.py}\\
Independent SymPy/python-flint checks of Sections~\ref{sec:first}, \ref{sec:rigidity}, \ref{sec:count}: edges of $f_r$, \eqref{eq:B2}, Lemma~\ref{lem:rig}, Lemma~\ref{lem:edgediv}, boundary ideal powers & \texttt{independent\_checks/}\newline\texttt{first\_coefficient/}\\
Known examples: Nahm points in exact number fields, $r\in\Q$ and $\gcd(f_r,\Psi_2)=1$ at all $38$ pairs & \texttt{known\_exact.py}\\
Remark on joint equations: common zeros of $f_r,\Psi_2,\Psi_3$ with $D\ne0$ & \texttt{joint\_check.py}\\
Remark on joint equations: $\Phi(r)$ at $s_1$ and in Examples~1 and~7 & \texttt{phi\_poly.py}\\
Known examples (Table~\ref{tab:known}): Nahm points, fields, vanishing of $\Psi_2,\dots,\Psi_5$ at all $38$ pairs; $\gcd(\Psi_2,\Psi_3)=1$ & \texttt{known\_examples.py}, \texttt{known\_gcd.py}\\
Euler exception (Lemma~\ref{lem:euler}): six values, $A=1$ formula; resultant sign convention & \texttt{euler\_check.py}\\
Complete $\Psi_2,\dots,\Psi_5$ at six parameter points: \texttt{formalells} versus the unshifted recurrence, coefficient by coefficient & \texttt{spot\_dump.py} (unshifted recurrence), \texttt{spot\_compare.py}\\
Independent certificates in SymPy/python-flint: top edges for all parameters, $U(A)$ at $892$ points, thin points, the branch $q_\Gamma=c\kappa^2$ & \texttt{independent\_checks/}\\
Further numerical evidence (boxes, families, coprimality tests) & \texttt{box2.py}, \texttt{fam*.py}, \texttt{coprime*.py}\\
Section~\ref{sec:m1}: effective pairs and $\rho$ on $X,Y=-1$; special pairs incl.\ $A=0,-1$ & \texttt{m1/rho\_lines.py}, \texttt{m1/special\_pairs.py}\\
Section~\ref{sec:m1}: $XY=1$ and $Y=X$ (conic condition, corners, Case B, $\Psi_4$) & \texttt{m1/xy1\_*.py}, \texttt{m1/xy\_close.py}, \texttt{m1/xy\_zagier2.py}, \texttt{m1/diag\_line*.py}\\
Section~\ref{sec:m1}: simple roots, bidegree $(2,2)$ & \texttt{m1/gen22\_*.py}\\
Section~\ref{sec:m1}: special roots (enumeration, \textsc{Singular} systems, rational points) & \texttt{m1/enum\_*.py}, \texttt{m1/sing\_*.py}, \texttt{m1/fam*.py}, \texttt{m1/special11*.py}\\
Section~\ref{sec:m1}: $b=0$; $a=1$; $a=d=1$; exchange symmetry & \texttt{m1/b0\_*.py}, \texttt{m1/a1\_*.py}, \texttt{m1/deg\_a1.py}, \texttt{m1/aa1\_*.py}, \texttt{m1/sym\_check.py}\\
Section~\ref{sec:m1}, final version: exact enumeration of all four cases; exact positivity and reducibility; Lemma~\ref{lem:tangent}; case table & \texttt{m1/fix\_*.py}, \texttt{m1/CASES.md}\\
Proposition~\ref{prop:ord22}: grid identity $V_{ij}=p_{ij}E_{ij}$; modular certificates at $P_1,\dots,P_5$ and for the families $A$, $B$; cross-check by Gr\"obner bases on all strata & \texttt{m1/ord22\_certificates.py}, \texttt{m1/fix\_ord22p.py}, \texttt{m1/fix\_points.py}\\
Section~\ref{sec:pole}: moment lemma; reciprocity (Proposition~\ref{prop:reciprocity}), symbolic for $j\le4$ & \texttt{pole/run\_recip.py} (\texttt{recip\_cert.gp}) $\to$ \texttt{pole/recip\_results.txt}; \texttt{pole/recip\_check.py}\\
Lemma~\ref{lem:norm}, rational $m$: thin points $P_1,\dots,P_4$ (certificate); diagnostics at $m=\frac32,\frac53,\frac74,\frac{25}2$ and for T1--T5 at rational $k$ & \texttt{pole/thin\_rational.py}; \texttt{pole/spot\_rational\_m.py}; \texttt{gate\_thin\_pts.py}\\
Section~\ref{sec:families}: the identities in Theorems~\ref{thm:compl}--\ref{thm:m1compl} (with $\eta,\theta,B,C$ symbolic) & \texttt{pole/families\_check.py} (needs \texttt{N1.gp}, \texttt{N2.gp} from \texttt{run\_recip.py}) \\
\bottomrule
\end{longtable}
\noindent Two implementation notes: (a) in \texttt{PsiL}/\texttt{PsiSym} the formal variable \texttt{m} of the
correction formula must be identified with the matrix parameter; (b) multivariate \texttt{factor} in PARI
can crash on large inputs, so factorisations were done univariately after elimination.
Several exclusions use $\gcd$ over sampled $x$-values of $\operatorname{Res}_Y(\Psi_2,\Psi_3)$; this is a multiple of the
true exceptional polynomial, so an empty root set is a proof, not evidence.

\noindent Dependence and validation. Everything rests on \cite[Thm.~2.1]{Miz} (the radial expansion and the formal
Gaussian description of its coefficients) and on the argument of Section~\ref{sec:setting} showing that
$\ell_j=0$ is necessary. Two independent implementations of the formal integral agree on complete polynomials
(Appendix~\ref{app:checks}). As further validation, the expansion gives exact vanishing of $\ell_2,\ell_3,\ell_4$
and $\ell_1=c$ for nine known modular sums, and agrees with brute-force summation.


\addcontentsline{toc}{section}{References}
\begin{thebibliography}{99}
\bibitem{Bre} D.~M.~Bressoud, \emph{A generalization of the Rogers--Ramanujan identities for all moduli}, J. Combin. Theory Ser. A \textbf{27} (1979), 64--68.
\bibitem{CGZ} F.~Calegari, S.~Garoufalidis and D.~Zagier, \emph{Bloch groups, algebraic K-theory, units, and Nahm's conjecture}, Ann. Sci. \'Ec. Norm. Sup\'er. (4) \textbf{56} (2023), 383--426.
\bibitem{CW} Z.~Cao and L.~Wang, \emph{Multi-sum Rogers--Ramanujan type identities}, J. Math. Anal. Appl. \textbf{522} (2023), 126960.
\bibitem{Cap} S.~Capparelli, \emph{On some representations of twisted affine Lie algebras and combinatorial identities}, J. Algebra \textbf{154} (1993), 335--355.
\bibitem{GZ} S.~Garoufalidis and D.~Zagier, \emph{Asymptotics of Nahm sums at roots of unity}, Ramanujan J. \textbf{55} (2021), 219--238.
\bibitem{Ink} K.~Inkeri, \emph{The real roots of Bernoulli polynomials}, Ann. Univ. Turku. Ser. A I \textbf{37} (1959), 1--20.
\bibitem{KR} S.~Kanade and M.~C.~Russell, \emph{IdentityFinder and some new identities of Rogers--Ramanujan type}, Exp. Math. \textbf{24} (2015), 419--423.
\bibitem{Kan} S.~Kanade, \emph{Searching for modular companions}, arXiv:1902.05199 (2019).
\bibitem{Kur} K.~Kur\c{s}ung\"oz, \emph{Andrews--Gordon type series for Capparelli's and G\"ollnitz--Gordon identities}, J. Combin. Theory Ser. A \textbf{165} (2019), 117--138.
\bibitem{Kur2} K.~Kur\c{s}ung\"oz, \emph{Andrews--Gordon type series for Kanade--Russell conjectures}, Ann. Comb. \textbf{23} (2019), 835--888.
\bibitem{LW} Z.~Li and L.~Wang, \emph{Rogers--Ramanujan type identities involving double, triple and quadruple sums}, arXiv:2306.17085.
\bibitem{Miz} Y.~Mizuno, \emph{Remarks on Nahm sums for symmetrizable matrices}, Ramanujan J. \textbf{66} (2025), article 62; arXiv:2305.02267.
\bibitem{Miz26} Y.~Mizuno, \emph{The three Kanade--Russell identities modulo nine}, arXiv:2609.09653 (2026).
\bibitem{Nah} W.~Nahm, \emph{Conformal field theory and torsion elements of the Bloch group}, in: Frontiers in Number Theory, Physics, and Geometry II, Springer, 2007, 67--132.
\bibitem{Mat} H.~Matsumura, \emph{Commutative Ring Theory}, Cambridge Stud. Adv. Math. \textbf{8}, Cambridge University Press, 1986.
\bibitem{Sing} W.~Decker, G.-M.~Greuel, G.~Pfister and H.~Sch\"onemann, \textsc{Singular} 4-3-2 --- A computer algebra system for polynomial computations, 2022, \url{https://www.singular.uni-kl.de}.
\bibitem{Ost} A.~Ostrowski, \emph{\"Uber die Bedeutung der Theorie der konvexen Polyeder f\"ur die formale Algebra}, Jahresber. Deutsch. Math.-Verein. \textbf{30} (1921), 98--99.
\bibitem{VZ} M.~Vlasenko and S.~Zwegers, \emph{Nahm's conjecture: asymptotic computations and counterexamples}, Commun. Number Theory Phys. \textbf{5} (2011), 617--642.
\bibitem{WW} B.~Wang and L.~Wang, \emph{Proofs of Mizuno's conjectures on generalized rank two Nahm sums}, Trans. Amer. Math. Soc. \textbf{379} (2026), 459--486; arXiv:2308.14728.
\bibitem{Xia} E.~X.~W.~Xia, \emph{The modulo 9 Kanade--Russell identities and their Nahm-sum duals}, arXiv:2609.08816 (2026).
\bibitem{Z} D.~Zagier, \emph{The dilogarithm function}, in: Frontiers in Number Theory, Physics, and Geometry II, Springer, 2007, 3--65.
\end{thebibliography}
\end{document}